\newenvironment{proof1}[1]{\begin{trivlist} \item[] {\em Proof of #1:}}{\newline \textcolor{white}{.}\hfill $\Box$
                      \end{trivlist}}
\theoremstyle{plain}
\newtheorem{thm}{Theorem}[section]
\newtheorem{lem}{Lemma}[section]
\newtheorem{prop}{Proposition}[section]
\theoremstyle{definition}
\newtheorem{remark}{Remark}[section]
\newtheorem{defn}{Definition}[section]
\newcommand{\Sw}{\Omega(\w)}
\newcommand{\w}{\bm{w}}
\newcommand{\pa}{\partial}
\newcommand{\eps}{\epsilon}
\newcommand{\R}{\mathbb{R}}
\renewcommand{\epsilon}{\varepsilon}
\newcommand{\Area}{\operatorname{Area}}
\newcommand{\dist}{\operatorname{dist}}
\newcommand{\D}{\mathscr{D}}
\newcommand{\Q}{\mathscr{N}}
\newcommand{\mS}{\mathscr{S}}
\newcommand{\G}{\mathscr{G}}
\definecolor{yaizaColor}{RGB}{0, 153, 153}
\title[Courant Sharp Neumann eigenvalues of Chain Domains]{Uniform upper bounds on Courant sharp Neumann eigenvalues of chain domains}
\author[T. Beck]{Thomas Beck}
\email{tbeck7@fordham.edu}
\address{Department of Mathematics, Fordham University \\ 
407 John Mulcahy Hall, 441 E. Fordham Road, \\
Bronx, NY 10458-5165}
\author[Y. Canzani]{Yaiza Canzani}
\email{canzani@email.unc.edu}
\address{Department of Mathematics, University of North Carolina at Chapel Hill \\ CB\#3250 Phillips Hall \\ Chapel Hill, NC 27599}
\author[J.L. Marzuola]{Jeremy L. Marzuola}
\email{marzuola@math.unc.edu}
\address{Department of Mathematics, University of North Carolina at Chapel Hill \\ CB\#3250 Phillips Hall \\ Chapel Hill, NC 27599}
\begin{document}

\maketitle

\begin{abstract}
We obtain upper bounds on the number of nodal domains of Laplace eigenfunctions on chain domains with Neumann boundary conditions. The chain domains consist of a family of planar domains, with piecewise smooth boundary, that are joined by thin necks. Our work does not assume a lower bound on the width of the necks in the chain domain. As a consequence, we
prove an upper bound on the number of Courant sharp eigenfunctions that is independent of the widths of the necks.
\end{abstract}

\section{Introduction}

A long-studied question is to what extent the spectrum of the Laplacian  interacts with the geometry of the domain on which it is defined \cite{kac1966can}. In this article, we study {\it chain domains} which consist of a family of disjoint bounded planar domains, with piecewise smooth boundaries, that are connected by thin necks (see Figure \ref{fig:domain}).  For these chain domains, we prove bounds on the number of nodal domains of the associated Laplace eigenfunctions when Neumann boundary conditions are imposed. 

Let $\Omega \subset \R^2$ be a bounded domain with piecewise smooth boundary and write $0=\mu_1 < \mu_2 \leq \ldots$ for the Laplace eigenvalues of the Neumann problem
\begin{equation}\label{e:ev}
\begin{cases}
\Delta u_m = -\mu_m u, \quad &\text{in}\; \;\Omega,\\
\partial_n u_m=0, \quad & \text{on}\; \; \partial \Omega.
\end{cases}
\end{equation}
The Courant Nodal Domain Theorem  \cite{courant} asserts that for all $ m=1,2, \ldots$,
\begin{equation}\label{e:courant}
\nu(u_m)\leq m, 
\end{equation}
where $\nu(u_m)$ is the number of nodal domains of $u_m$. That is, $\nu(u_m)$ is the number of connected components of $\{x\in \Omega: u_m(x)\neq 0\}$.
If $\nu(u_m)=m$, then $u_m$ is said to be a {\it Courant sharp eigenfunction} and $\mu_m$ a {\it Courant sharp eigenvalue}.

In this article, we study the case in which $\Omega$ is a chain domain and prove an asymptotic upper bound on $\nu(u_m)$ that, in particular, provides an upper bound on the values of $\mu_m$ that can be Courant sharp. The bound we prove is independent of a lower bound of the widths of the necks in the chain domain, hence notably does not depend upon the cut-distance of $\partial \Omega$.

A chain domain  consists of a family of disjoint bounded planar domains, $\{\D_\ell\}_{\ell=1}^{M}$, and a family of thin necks joining the domains. The planar domains  have smooth boundaries except for a finite number of vertices. 

Since we are interested in understanding how our estimates respond to the width of the necks shrinking to zero, we actually think of a chain domain as a structure built around a \emph{skeleton} comprised of the domains $\{\D_\ell\}_{\ell=1}^{M}$ joined by a family of curves  $\{\Gamma_i\}_{i=1}^N$. Then, the skeleton is ``thickened" into what we call a \emph{base domain}, $\Omega$, by turning each curve $\Gamma_i$ into a neck that joins two domains. Formally, associated to each curve $\Gamma_i$ there is a smooth homotopy $\G_i:[0,L_i]\times [-1,1] \to \R^2$ such that $\Gamma_i=\G_i([0,L_i]\times \{0\})$ 
and each of the arcs $\G_i(\{0\}\times [-1,1])$ and $\G_i(\{L_i\}\times [-1,1])$ belong to the boundaries of the two domains being joined by $\Gamma_i$. The curve $\Gamma_i$ is then thickened to the neck  $\Q_i:=\G_i([0,L_i]\times [-1,1])$. The base domain is the set $\Omega=\bigcup_{\ell=1}^{M}  \D_\ell\cup \bigcup_{i=1}^N\Q_i.$ See Definition \ref{defn:base} for a detailed description.

Given a base domain $\Omega$, we allow for the width of each neck to vary by working with a family of neck widths $\w:=\{I_i\}_{i=1}^N$, where the interval $I_i \subset [-1,1]$ for each $i$. 
Indeed, we define each neck as $\Q_i(\w):=\G_i([0,L_i]\times I_i)$. The \emph{chain domain} is then
$$\Omega(\w)=\bigcup_{\ell=1}^{M}  \D_\ell\cup \bigcup_{i=1}^N\Q_i(\w).$$

Note that this definition of a chain domain includes polygonal figures with smooth edges. A toy model with a single neck is depicted in Figure~\ref{fig:toy}. See Definition \ref{defn:domain} for a precise definition of a chain domain and Figure \ref{fig:domain} for an illustration of such.

\begin{figure}[h]
    \centering
\begin{tikzpicture}
\draw[thick] (-1,0.2) -- (1,0.2);
\draw[thick] (-1,-0.2) -- (1,-0.2);
\draw[thick] (1,0.2) -- (1,1) -- (3,1) -- (3,-1) -- (1,-1) -- (1,-0.2);
\draw[thick] (-1,0.2) -- (-1,1) -- (-3,1) -- (-3,-1) -- (-1,-1) -- (-1,-0.2);
\node[font=\Large] at (-2,0) {$\D_1$};
\node[font=\Large] at (2,0) {$\D_2$};
\draw[<->] (0,-0.2)--(0,0.2);
\node at (0,0.5) {$\w$};
\end{tikzpicture}
    \caption{A simple chain domain, $\Omega(\w)=\D_1 \cup \Q  \cup \D_2$, in which two {squares, $\D_1, \D_2$,} {of side length $2$} are joined by the neck {$\Q:=[-1,1]\times \w$, with $\w= (-w/2,w/2)$}.}
    \label{fig:toy}
\end{figure}

Our main result is the following.
\begin{thm} \label{thm:Main}
Let $\Omega$ be a base domain. There exists $C>0$ such that for every family of neck widths, $\w$, the following holds.
If $\{u_m(\w)\}_m$ and $\{\mu_m(\w)\}_m$ are the Neumann eigenfunctions and eigenvalues for the chain domain $\Omega(\w)$, then
\begin{align}\label{e:main ub}
      \underset{m\to\infty}{\lim \sup}\,\frac{\nu(u_m(\w))}{m} \leq  \frac{4}{\lambda_1(\mathbb{D})}.
\end{align}
Moreover,  for every Courant sharp eigenvalue $\mu_m(\w)$,
\begin{equation}\label{e:main ub 2}
\Area(\Omega(\w))\mu_m(\w)< C.
\end{equation}
\end{thm}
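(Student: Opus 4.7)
The plan is to adapt Pleijel's classical argument for nodal count bounds, with the new technical work being a Weyl-type lower bound for the Neumann counting function that is uniform in the neck widths $\w$. The starting point is the Faber--Krahn inequality applied to each nodal domain: since $u_m(\w)$ restricted to any nodal domain $D$ is a Dirichlet eigenfunction with eigenvalue $\mu_m(\w)$, one has $\lambda_1^{\mathrm{Dir}}(D) \leq \mu_m(\w)$, and Faber--Krahn gives $\Area(D) \geq \pi \lambda_1(\mathbb{D}) / \mu_m(\w)$. Summing over nodal domains produces the purely geometric inequality
\[
    \nu(u_m(\w)) \;\leq\; \frac{\Area(\Omega(\w))\,\mu_m(\w)}{\pi \lambda_1(\mathbb{D})}.
\]
For \eqref{e:main ub}, combining this with the standard Neumann Weyl asymptotics $N^{\mathrm{Neu}}_\w(\mu) \sim \Area(\Omega(\w))\mu/(4\pi)$ on the fixed domain $\Omega(\w)$ immediately yields $\limsup_m \nu(u_m(\w))/m \leq 4/\lambda_1(\mathbb{D})$.

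The uniform bound \eqref{e:main ub 2} requires a Weyl-type lower bound whose constants do not depend on $\w$. The plan is to establish
\[
    N^{\mathrm{Neu}}_\w(\mu) \;\geq\; \frac{\Area(\Omega(\w))\,\mu}{4\pi} - C(1 + \sqrt{\mu}),
\]
with $C$ depending only on the base domain $\Omega$, via Dirichlet--Neumann bracketing. Write $\Omega(\w)$ as the essentially disjoint union of the domains $\D_\ell$ and the necks $\Q_i(\w)$, and impose Dirichlet conditions on the interface arcs $\G_i(\{0,L_i\}\times I_i)$ while keeping Neumann elsewhere. Eigenfunctions of the resulting mixed problems on each piece extend by zero to admissible $H^1$ test functions on $\Omega(\w)$, so
\[
    N^{\mathrm{Neu}}_\w(\mu) \;\geq\; \sum_{\ell=1}^M N^{\mathrm{mix}}_{\D_\ell}(\mu) \;+\; \sum_{i=1}^N N^{\mathrm{mix}}_{\Q_i(\w)}(\mu).
\]
Each $\D_\ell$ is a fixed planar domain, so classical Weyl estimates give $N^{\mathrm{mix}}_{\D_\ell}(\mu) = \Area(\D_\ell)\mu/(4\pi) + O(\sqrt\mu)$ with $\w$-independent error. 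For each neck $\Q_i(\w)$, I would pull back via $\G_i$ to the straightened rectangle $[0,L_i]\times I_i$ and separate variables; counting the lattice points with $(k\pi/L_i)^2 + (j\pi/|I_i|)^2 \leq \mu$ gives the main term $\Area(\Q_i(\w))\mu/(4\pi)$ plus an error driven by the longitudinal direction of size $O(L_i\sqrt\mu)$, crucially independent of $|I_i|$, since only finitely many transverse modes are active at a given $\mu$ when $|I_i|$ is small. The bounded distortion of $\G_i$ is absorbed into constants.

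Combining the two inequalities, a Courant sharp eigenvalue $\mu_m(\w)$ satisfies $m = \nu(u_m(\w)) \leq \Area(\Omega(\w))\mu_m(\w)/(\pi\lambda_1(\mathbb{D}))$ and, simultaneously, $m \geq \Area(\Omega(\w))\mu_m(\w)/(4\pi) - C(1 + \sqrt{\mu_m(\w)})$. Because $\lambda_1(\mathbb{D}) = j_{0,1}^2 > 4$, the coefficients of $\Area(\Omega(\w))\mu_m(\w)$ on the two sides differ by a positive constant, so subtracting forces $\Area(\Omega(\w))\mu_m(\w)$ to be uniformly bounded (the $\sqrt\mu$ error is absorbed using $\Area(\Omega(\w)) \leq \Area(\Omega)$). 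The chief obstacle is the neck analysis: one must show that the constants in the lattice-point count depend only on $L_i$ and not on the neck width $|I_i|$, and that the metric distortion coming from $\G_i$ does not degrade the error as the necks pinch. The rest is a standard Pleijel-type combination.
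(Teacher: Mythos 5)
The proposal breaks down at its very first step, and the error is precisely the central difficulty that the paper is built around. You assert that ``since $u_m(\w)$ restricted to any nodal domain $D$ is a Dirichlet eigenfunction with eigenvalue $\mu_m(\w)$, one has $\lambda_1^{\mathrm{Dir}}(D) \leq \mu_m(\w)$.'' This is correct only for nodal domains compactly contained in the interior of $\Omega(\w)$. For a nodal domain $D$ touching $\pa\Omega(\w)$, the restriction $u_m(\w)|_D$ satisfies \emph{Neumann} conditions on $\pa D\cap\pa\Omega(\w)$ and Dirichlet conditions only on the interior nodal set; it is therefore the first eigenfunction of a mixed problem on $D$, not the first Dirichlet eigenfunction, and Faber--Krahn gives no lower bound on $\Area(D)$. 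A thin boundary-hugging sliver can be a nodal domain of arbitrarily small area. In particular your clean inequality $\nu(u_m(\w)) \leq \Area(\Omega(\w))\mu_m(\w)/(\pi\lambda_1(\mathbb{D}))$ is false as stated. This obstruction is exactly why the Neumann Pleijel theorem (Polterovich, L\'ena, Gittins--L\'ena) required new ideas beyond Pleijel's Dirichlet argument, and it is the reason the paper introduces the $\delta$-partition and the case analysis into bulk, boundary, corner, and neck nodal domains. The bulk domains are the only ones where Faber--Krahn applies directly (after a cutoff, producing the $\tfrac{1+\eps}{1-\eps}$ factor in Proposition~\ref{prop:bulk}); the boundary, corner, and neck cases each require a dedicated argument (Propositions~\ref{prop:boundary}, \ref{prop:corner}, \ref{prop:neck}), and for a chain domain the corner and neck cases are the genuinely new technical content. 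Your proposal skips all of this.

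The Weyl lower bound half of your argument is on sounder ground conceptually, though it diverges from the paper's route. The paper bounds $\mathcal{N}^N_{\Omega(\w)}(\mu)$ below by the Dirichlet counting function and invokes a van den Berg--Gittins remainder estimate involving $M_{\Omega(\w)}(\delta)$, which it then controls via Lemma~\ref{lem:M-bound}. Your Dirichlet--Neumann bracketing across the interface arcs is a plausible alternative, and your observation that each neck contributes an error $O(L_i\sqrt{\mu})$ independent of the transverse width $|I_i|$ is correct in spirit (only $O(|I_i|\sqrt{\mu})$ transverse modes are active, and when $|I_i|$ is tiny the Weyl main term for the neck is tiny too). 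But two details would need care: the mixed boundary-value problem on each $\D_\ell$ has a Dirichlet patch $\G_i(\{0\}\times I_i)$ that \emph{moves with $\w$}, so you need the Weyl remainder for the mixed problem to be uniform over the location and length of the Dirichlet patch; and you would need to verify that the quasi-isometry $\G_i$ does not degrade the lattice-point count uniformly in $\w$. These are fixable issues, unlike the Faber--Krahn gap, which is fatal to the argument as written.
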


Here, $\lambda_1(\mathbb{D})$ is the first Dirichlet eigenvalue of the unit disc, $\mathbb{D}\subset \R^2$. It is known that $\lambda_1(\mathbb{D}) =j_{0,1}^2$, where $j_{0,1}$ is the first positive zero of the Bessel function $J_0$. Since ${4}/\lambda_1(\mathbb{D}) \approx 0.692 < 1$,  the upper bound in \eqref{e:main ub} implies that there are only finitely many Courant sharp eigenfunctions. The estimate in \eqref{e:main ub 2} then gives an upper bound on the eigenvalue of such an eigenfunction, independent of the choice of neck-widths $\w$.

\begin{remark}
In Section \ref{sec:unif} we introduce five geometric constants $\rho^*, \kappa^*, \delta^*, \tau^*, w^*$ associated to the geometry of the base domain $\Omega$. 
The constant $C$ in \eqref{e:main ub 2} depends only on  $\rho^*, \kappa^*, \delta^*, \tau^*, w^*$. The same constant $C$ can be used for all chain domains whose associated base domains, $\tilde \Omega$, have geometric constants controlled by $\rho^*, \kappa^*, \delta^*, \tau^*, w^*$  as explained in Remark \ref{rem:uniform}. This implies that the number and structure of the planar domains and necks comprising $\Omega$ only play a role in controlling the Courant sharp eigenvalues through how they impact these five geometric constants.
\end{remark}

There are several articles that study  eigenfunction nodal domain count under various boundary conditions (see Section \ref{sec:prior} for an account of these). Directly related to our main result is the work of Gittins and L\'ena \cite{gittins-lena2020}, who also give a bound on $\text{Area}(\Omega)\mu_m$ when $\mu_m$ is Courant sharp. However,  applying their result would require imposing a lower bound on the widths of the connecting necks in the chain. A strength of our work is that we prove bounds on the nodal domain counting function that are independent of the width of the necks, and hence obtain that the oscillatory behavior of the Courant sharp eigenfunctions is driven by the geometry of the planar domains that make up the chain, as stated in \eqref{e:main ub 2}. 
 The bound in \eqref{e:main ub} was proved by Polterovich \cite{polterovich2009} when $\Omega \subset \R^2$  has piecewise real analytic boundary and by L\'ena \cite{lena-pleijel} when the boundary is $C^{1,1}$. Theorem \ref{thm:Main} extends their work to more general domains, including polygonal regions with smooth edges.

When working with a chain domain on which \emph{Dirichlet boundary conditions} are imposed, the spectrum of the Laplacian approaches the spectrum of the disjoint union of the individual planar domains as the widths of the necks connecting the domains decrease to $0$. The corresponding eigenfunctions do not `see' the thin necks and become localized to the connecting planar domains, for example,  see \cite{daners2003dirichlet}. In contrast, when Neumann boundary conditions are imposed, the behavior of the low-energy eigenfunctions is influenced by the presence of the thin necks. Indeed, in this case, the eigenfunctions for the chain behave like the restricted eigenfunctions on the individual smooth domains, but are `connected' along each neck by a function that, to leading order, is an eigenfunction of a one dimensional Schr\"odinger operator \cite{arrieta95,J89,Jim,MJIm}.

In this work, instead of studying low-energy eigenfunctions, we ask whether the intermediate to high energy eigenfunctions have oscillatory behavior that is more closely related to the geometry of the family of planar domains in the chain than to the structure of the thin necks connecting them. Theorem \ref{thm:Main} shows that the behavior of Courant sharp eigenfunctions is driven by that of the planar domains and ignores the structure of the connecting necks.  Interestingly, the oscillatory behavior of eigenfunctions has been known to be unstable to domain perturbations for some time, see \cite{U73,U76}. Indeed, in previous work, the authors explored how boundary perturbations can lower the number of nodal domains of low energy eigenfunctions \cite{beck2022nodal}.  It is thus natural to think that, in general, it should be more challenging to have many internal oscillations in chain domains than in the piecewise smooth domains themselves, but we do not address that question  here.

The problem we study has a natural geometric interpretation of a community detection problem in data analysis.  Many data sets have the structure of stochastic block models, which consist of clusters of sub-networks that are strongly connected with weak connections between them.  It is a natural question in learning and community detection algorithms to determine the number of communities that one should actually look for in a data set.  This study is a first step in understanding how the spectrum of the Laplacian with Neumann boundary conditions may be used to explore such a question in a continuum setting, which can in many ways be seen as a model for large data sets with manifold structure, see \cite{singer2017spectral,trillos2018variational}.  Indeed, we conjecture that a chain domain of $k$ planar domains connected in a line will have its first $k$ eigenfunctions saturating Courant's nodal domain bound by oscillating as much as possible, while the eigenfunctions with sufficiently large eigenvalues will not  saturate the bound.  This article works towards this conjecture, by giving bounds on the nodal domain count that are independent of the thin connections between domains.

\subsection{Prior results}\label{sec:prior}
While we will focus on planar domains with Neumann boundary conditions, the Courant Nodal Domain Theorem (see equation \eqref{e:courant}) also holds for  eigenfunctions with Dirichlet boundary conditions. In \cite{pleijel1956}, Pleijel proved that for planar domains with Dirichlet boundary conditions, there are only a finite number of Courant sharp eigenfunctions. This was extended by Peetre in \cite{peetre1957} to domains on Riemannian surfaces, and by B\'erard and Meyer \cite{berard-meyer1982} to $n$-dimensional Riemannian manifolds that are compact or have smooth boundary. In each case, the result is achieved by obtaining a limiting upper bound on the ratio between the number of nodal domains of the  $m$-th  Dirichlet  eigenfunction, $v_m$, and its index $m$. In two dimensions, this upper bound is
\begin{align} \label{eqn:pleijel}
    \underset{m\to\infty}{\lim \sup}\;\frac{\nu(v_m)}{m} \leq  \frac{4}{\lambda_1(\mathbb{D})},
\end{align}
the same as in Theorem \ref{thm:Main}. Since  $\lambda_1(\mathbb{D}) =j_{0,1}^2$ and ${4}/{j_{0,1}^2} \approx 0.692 < 1$, the upper bound in \eqref{eqn:pleijel} immediately implies that there are only finitely many Courant sharp Dirichlet eigenfunctions.

A key step in the proof of \eqref{eqn:pleijel} is to obtain a lower bound on the area of a nodal domain using the Faber-Krahn Theorem. This theorem can be used in the Dirichlet case because the restriction of a Dirichlet eigenfunction to a nodal domain is the first Dirichlet eigenfunction of that domain. When Neumann boundary conditions are imposed, this is no longer true for nodal domains that touch the boundary. However, by using a different method to count nodal domains near the boundary, Polterovich \cite{polterovich2009} in the two-dimensional case with piecewise real analytic boundaries, and L\'ena \cite{lena-pleijel} in $n$-dimensions with $C^{1,1}$-boundaries, showed that \eqref{eqn:pleijel} continues to hold for Neumann eigenfunctions, and Robin eigenfunctions with non-negative Robin parameter.  This was extended to any sign of the Robin parameter, for domains with $C^{1,1}$-boundaries, in the recent work \cite{hassannezhad2023pleijel}.

While the above results guarantee that there are only finitely many Courant sharp eigenfunctions, they do not provide an upper bound on the corresponding eigenvalue in terms of geometric quantities of the underlying domain. For a planar bounded domain $\Omega$,
a bound on  $\text{Area}(\Omega)\lambda$, when $\lambda$ is a Courant sharp Dirichlet eigenvalue, was established by B\'erard and Helffer \cite{berard-hellfer2016}.  Such a bound is given in terms of dilation invariant quantities involving $\text{Area}(\Omega)$ and $\sup_{\delta>0}\,\frac{1}{\delta}M_{\Omega}(\delta)$, where
\begin{equation}\label{e:M}
     M_{\Omega}(\delta):=\text{Area}\left(\{x\in\Omega:\,\dist(x,\pa\Omega)<\delta \} \right).
\end{equation}
For an open set $\Omega \subset \R^n$ of finite Lebesgue measure, van den Berg and Gittins \cite{vdberg-gittins2016} obtain an upper bound on a Courant sharp Dirichlet eigenvalue in terms of $\text{Area}(\Omega)$ and $ M_{\Omega}(\delta)$.
For a planar domain with $C^2$-boundary, Gittins and L\'ena  prove an upper bound on $\text{Area}(\Omega)\mu$, when $\mu$ is a Courant sharp Neumann eigenvalue \cite{gittins-lena2020}. This bound is given in terms of an upper bound on $ \text{Area}(\Omega)$, the isoperimetric ratio, the curvature of $\partial \Omega$, and  a lower bound on the cut-distance
\begin{align*}
    \inf_{t\in[0,L]}\sup_{\delta>0}\left\{\delta: \;\dist\Big(\gamma(t)+sn(t),\pa\Omega\Big)=s \; \text{for all} \;s\in[0,\delta] \Big.\right\}.
\end{align*}
Here, $\gamma:[0,L] \to \R^2$ is a parameterization of  $\pa\Omega$ and  $n(t)$ is the unit inward normal at $\gamma(t)$. 

Note that as the width $w$ shrinks to $0$, the chain domain $\Sw$ from  Figure \ref{fig:toy} does not fit into the results of Gittins and L\'ena in \cite{gittins-lena2020}. This is because the cut-distance of the neck, $[-1,1]\times (-w/2,w/2)$, 
shrinks to zero when $w$ approaches $0$. Moreover, the sharp corners in $\Sw$ mean that the curvature of the boundary is not bounded from above. Therefore, a major part of our study of the Courant sharp Neumann eigenfunctions of such a chain domain will be focused on the behavior of Neumann eigenfunctions in thin necks and near the vertices of the domain.

\subsection*{Acknowledgements}  T.B. was supported through NSF Grant DMS-2042654. Y.C. was supported by the Alfred P. Sloan Foundation, NSF CAREER Grant DMS-2045494, and NSF Grant DMS-1900519.  JLM acknowledges support from the NSF through NSF grant DMS-1909035 and NSF FRG grant DMS-2152289. 

\section{Chain domains}

As described in the introduction, we study the Neumann eigenvalues and eigenfunctions of a class of planar domains consisting of a number of piecewise smooth domains joined by thin necks. We now give the precise definition of the chain domains under consideration. See Figure \ref{fig:domain} for an example of such a domain.

We define the family of chain domains in three steps. First, we introduce the \textit{skeleton} consisting of a finite number of piecewise smooth connected regions, joined by smooth curves. We then define a \textit{base domain}, by using homotopies to replace the smooth curves by fixed necks. Finally, we restrict the domains of these homotopies to allow for a full family of necks, of arbitrarily small width.
\begin{defn}[Skeleton] \label{defn:skeleton}
Let  $M\in \mathbb N$, $\{K_{i,j}\}_{i,j=1}^M \subset \mathbb N$. Then, the skeleton domain $\mS$ is given by the disjoint union
\begin{align*}
\mS = \left(\bigcup_{\ell=1}^{M} \D_\ell\right) \cup \left(\bigcup_{1\leq i\leq j\leq M}\bigcup_{k=1}^{K_{ij}}\Gamma_{ij,k}\right).
\end{align*}
\begin{itemize}
\item For $1 \leq \ell \leq M$, each $\D_\ell$ is a bounded planar domain with smooth boundary except for a finite number of vertices.

\item For $1\leq i \leq j \leq M$ and $1\leq k \leq K_{ij}$, $\Gamma_{ij,k}$ is a smooth \textit{neck curve} joining $\D_i$ and $\D_j$, parameterized by arc-length, 
$$
\Gamma_{ij,k}: [0,L_{ij,k}]\to \R^2,
$$
with $\Gamma_{ij,k}(0) \in \pa \D_i$ and  $\Gamma_{ij,k}(L_{ij,k}) \in \pa \D_j$.
\end{itemize}
\end{defn}

Next, we endow a skeleton $\mS$ with a family of homotopies that encode how the curves $\{\Gamma_{ij,k}\}$ are thickened into necks.

\begin{defn}[Base domain] \label{defn:base}
    Let  $\mS$ be a skeleton domain as in Definition \ref{defn:skeleton}.  The base domain $\Omega$ associated to  $\mS$ is the disjoint union
    \begin{align*}
    \Omega = \left(\bigcup_{\ell=1}^{M} \D_\ell\right) \cup \left(\bigcup_{1\leq i\leq j\leq M}\bigcup_{k=1}^{K_{ij}}\G_{ij,k}([0,L_{ij,k}]\times(-1,1))\right),
    \end{align*}
where for each $i,j,k$,
    \begin{align*}
        \G_{ij,k}:[0,L_{ij,k}]\times[-1,1] \to \R^2
    \end{align*}
    is a smooth homotopy with the following properties:
    \begin{itemize}
        \item $\G_{ij,k}(s,0) = \Gamma_{ij,k}(s)$ for all $s\in[0,L_{ij,k}]$;
        \item $\G_{ij,k}(0,t)\in\pa \D_{i}$ and $\G_{ij,k}(L_{ij,k},t)\in\pa \D_j$, for all $t\in[-1,1]$, and not coinciding with any vertex of $\D_i$ or $\D_j$;
        \item ${\rm det} [\pa_t\G_{ij,k}(s,t), \pa_s \G_{ij,k}(s,t)]\neq0$ for all $(s,t)$, i.e. the Jacobian for this coordinate system is non-degenerate;
        \item  the images of the $\G_{ij,k}$ are disjoint, and disjoint from the interiors of all the domains $\D_\ell$.
    \end{itemize}
\end{defn}

    Note that since ${\rm det} [\pa_t\G_{ij,k}(s,t), \pa_s \G_{ij,k}(s,t)]\neq0$ for all $(s,t)$, each curve $\G_{ij,k}(\cdot,t)$ meets $\pa\D_i$ and $\pa \D_j$ transversally. We are now ready to introduce the chain domains, in which we allow the widths of the necks to become arbitrarily small.

\begin{defn}[Chain domain] \label{defn:domain}
Let $\Omega$ be a base domain as in Definition \ref{defn:base}.
    We define a collection of widths $\w$ by \begin{align*}
        \w=\{I_{ij,k}:\; 1\leq i,j, \leq M, \; 1\leq k \leq K_{ij}\},
    \end{align*}
    where  each $I_{ij,k}$ is an open interval in $(-1,1)$ containing $0$. The chain domain $\Omega(\w)$ is then given by the disjoint union
    \begin{align*}
    \Omega(\w) = \left(\bigcup_{\ell=1}^{M} \D_\ell\right) \cup \left(\bigcup_{1\leq i\leq j\leq M}\bigcup_{k=1}^{K_{ij}} \Q_{ij,k}(\w)\right).
    \end{align*}
    Here $ \Q_{ij,k}(\w)$ is a \textit{neck} joining $\D_i$ and $\D_j$ given by
    \begin{align*}
         \Q_{ij,k}(\w) = \G_{ij,k}([0,L_{ij,k}]\times I_{ij,k}).
    \end{align*}
    A measure of the minimum width of the neck $ \Q_{ij,k}(\w)$ is defined by
    \begin{align}\label{e:ws}
        w_{ij,k} := \inf_{s\in [0,L_{ij,k}]}|\G_{ij,k}(s,t_2) - \G_{ij,k}(s,t_1)| 
    \end{align}
    for  $I_{ij,k}=(t_1,t_2)$. 
\end{defn}
The above definition includes the case $M=1$, $K_{11}=0$, where $\Omega(\w) = \D_1$ is a bounded domain, with piecewise smooth boundary except for a finite number of vertices, and with no $\w$ dependence. We define the interval $I_{ij,k}$ to contain $0$  to ensure that the chain domain $\Omega(\w)$ always contains the skeleton $\mS$.

\begin{figure}[h]
    \centering
\begin{tikzpicture}
\draw[thick] (-1,0.7) .. controls (-0.2,0.8) and (0.2,0.9) .. (1,0.7);
\draw[thick] (-1,0.4) .. controls (-0.4,0.6) and (0.2,0.3) .. (1,0.4);
\draw[thick] (-1,-0.6) .. controls (-0.7,-0.4) and (0.2,-0.65) .. (1,-0.6);
\draw[thick] (-1,-0.8) -- (1,-0.8);
\draw[thick] (-1,0.7) -- (-1,1);
\draw[thick] (-1,-0.8) -- (-1,-1);
\draw[thick] (-1,0.4) -- (-1,-0.6);
\draw[thick] (1,0.7) -- (1,1.2);
\draw[thick] (1,-0.8) -- (1,-1.1);
\draw[thick] (1,0.4) -- (1,-0.6);
\draw[thick] (1,1.2) -- (3,1.6);
\draw[thick] (3,1.6) .. controls (2.5,1) and (3.4,0) .. (2.8,-1.5);
\draw[thick] (1,-1.1) .. controls (1.6,-0.8) and (2.4,-1.6) .. (2.8,-1.5);
\draw[thick] (-1,1) .. controls (-2,0.7) and (-2.5,1.2) .. (-4,1.3);
\draw[thick] (-4,1.3) .. controls (-4,0) and (-3.6,-1) .. (-3.5,-2);
\draw[thick] (-2.7,-0.4) circle (0.5cm);
\draw[thick] (-3.5,-2) .. controls (-3,-2) and (-2.8,-1.8,) .. (-2.5,-1.5);
\draw[thick] (-2.3,-1.3) .. controls (-2,-1) and (-1.8,-1) .. (-1,-1);
\draw[thick] (-2.5,-1.5) .. controls (-2,-4.4) and (-0.8,-3.6) .. (-0.2,-3.25);
\draw[thick] (-2.3,-1.3) .. controls (-2,-3.8) and (-0.8,-3.5) .. (-0.45,-3.13);
\draw[thick] (-1.2,-2) .. controls (-1.7,-2.5) and (-1.3,-2.9) .. (-1.3,-3);
\draw[thick] (-1.1,-1.75) .. controls (-0.93,-1.7) and (-0.87,-1.65) .. (-0.9,-1.3);
\draw[thick] (-1.1,-1.75) .. controls (-1.15,-1.85) and (-1.17,-1.95) .. (-1.2,-2);
\draw[thick] (-1.3,-3) .. controls (-0.9,-3.5) and (-0.6,-2.5) .. (-0.45,-3.13);
\draw[thick] (-0.2,-3.25) .. controls (0.2,-3.5) and (0.4,-2.5) .. (1,-2.8);
\draw[thick] (-0.9,-1.3) .. controls (-0.6,-1.5) and (-0.3,-1.2) .. (0,-1.3);
\draw[thick] (0,-1.3) .. controls (0.3,-1.7) and (0.6,-1.5) .. (1,-1.7);
\draw[thick] (1,-1.7) .. controls (0.7,-2.2) and (0.8,-2.4) .. (1,-2.8);
\node[font=\Large] at (-1.75,0) {$\D_1$};
\node[font=\Large] at (2,0) {$\D_2$};
\node[font=\Large] at (0,-2.2) {$\D_3$};
\node at (0,1.75) {$\Q_{12,1}(\w)$};
\node at (2.6,-2.1) {$\Q_{12,2}(\w)$};
\node at (-3.2,-3.5) {$\Q_{13,1}(\w)$};
\draw (-0.3,1.45) -- (0,0.7);
\draw (1.8,-1.8) -- (0,-0.7);
\draw (-3.4,-3.2) -- (-2.25,-2.15);
\end{tikzpicture}
    \caption{An example of a domain $\Omega(\w)$ from Definition \ref{defn:domain}}
    \label{fig:domain}
\end{figure}
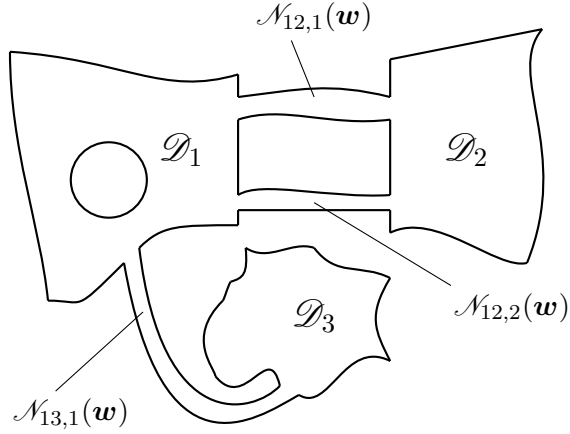


 \subsection{Geometric constants}\label{sec:unif}

Let $\Omega$ be a base domain as in Definition \ref{defn:base}.
We define $$A^* := \sum_{\ell}\text{Area}(\D_\ell)$$ and 
\begin{align*}
    L^* := \sum_{\ell}\text{Length}(\pa\D_\ell) + 2\sum_{i,j,k} \max_{t\in[-1,1]} \text{Length}(\G_{ij,k}([0,L_{ij,k}]\times\{t\})).
\end{align*}
We will bound the Courant sharp Neumann eigenvalues of $\Omega$ in terms of the five geometric constants, $\rho^*, \kappa^*, \delta^*, \tau^*, w^*$ defined as follows.

\begin{itemize}
    \item \emph{Isoperimetric ratio constant, $\rho^*$}: We define $\rho^*>0$ to be the isoperimetric ratio,
$$
\rho^* = \frac{ (L^*)^2}{A^*}.
$$
    \item \emph{Normalized curvature constant, $\kappa^*$}: We define $\kappa^*>0$ so that the curvature of each smooth segment of $\pa \D_\ell$ and the slices $\G_{ij,k}([0,L_{ij,k}]\times \{t\})$ are bounded above by $\kappa^*/L^*$ for all $t\in [-1,1]$ and for all choices of $\ell, i, j, k$.
    \item \emph{Vertex control constant, $\delta^*$}:  The constant $\delta^*>0$ is defined so that the following holds. For all $\ell$ and each vertex  $p \in \pa \D_\ell$, we have that the connected component of  $B(p, L^*\delta^*)\cap \pa\Omega$ containing $p$ consists of two smooth curves joined at $p$, and after possibly rotating, we may assume that their tangent lines at $p$ agree with the  lines $\theta = \tfrac{\pi}{2} \pm\tfrac{\theta_0}{2}$ for some $0 < \theta_0 < \pi$. These are graphs with respect to the $x_1$-axis, contained within the lines $\theta = \tfrac{\pi}{2} \pm\tfrac{\theta_0}{4}$ and $\theta = \tfrac{\pi}{2} \pm\tfrac{3\theta_0}{4}$, and with slope bounded by $1/\delta^*$. Moreover, the same properties hold at the vertices where the sides of $\pa \D_{i}$, $\pa\D_{j}$ meet the curves $\G_{ij,k}(\cdot,t)$, for each $t\in[-1,1]$ and all choices of $i,j,k$.
    \item  \emph{Normalized cut-distance constant, $\tau^*$}: The constant $\tau^*$ is defined so that, for $\eta\leq L^*\delta^*$ and all $\ell$, the cut-distance, given by
    \begin{align*}
    \inf_{u\in K_{\eta,\ell}}\sup_{\delta>0}\left\{\delta: \;\dist\Big(\gamma(u)+sn(u),\pa\D_{\ell}\Big)=s \; \text{for all} \;s\in[0,\delta] \Big.\right\},
\end{align*}
is bounded from below by $\tau^*\eta$. Here, $\gamma:K_{\eta,\ell}\to\R^2$ is a parameterization of $\pa \D_{\ell}$ with the parts of $\pa\D_{\ell}$ in the discs of radius $\eta$ centered at each vertex of $\pa\D_{\ell}$ excluded, for a union of intervals $K_{\eta,\ell}$, and $n(u)$ is the inward unit normal at $\gamma(u)\in\pa\D_{\ell}$.
    
    \item \emph{Neck-width constant, $w^*$}: The constant $w^*$ is defined to provide control on the widths and regularity of the necks  so that for all choices of $i,j,k,$
    \begin{align}\label{e:max-min width}
          & \min_s\frac{\min_{t\in[-1,1]}|\pa_t\G_{ij,k}(s,t)|}{\max_{t\in[-1,1]}|\pa_t\G_{ij,k}(s,t)|}\geq w^*, \qquad  w^*\leq |\pa_s\G_{ij,k}(s,t)| \leq 1/w^*, \\ & \hspace{2cm} \min_{(s,t)}\frac{\left|{\rm det} [\pa_t\G_{ij,k}(s,t), \pa_s \G_{ij,k}(s,t)]\right|}{|\pa_t\G_{ij,k}(s,t)||\pa_s\G_{ij,k}(s,t)|}\geq w^*. \notag
    \end{align}
\end{itemize}

Our main result, Theorem \ref{thm:Main}, produces a bound  for a Courant sharp eigenvalue $\mu_m(\w)$ of the form  $|\Omega(\w)|\mu_m(\w)<C$, where $C$ only depends on $\rho^*, \kappa^*, \delta^*, \tau^*, w^*$. Since these control quantities are invariant under dilations of the base domain $\Omega$, the constant $C$ in the theorem can also be chosen uniformly over dilations of the domain. More generally, we have the following remark.

\begin{remark}[Uniform control over base domains $\Omega$] \label{rem:uniform}
    The constant $C$ in \eqref{e:main ub 2} in Theorem \ref{thm:Main} can be taken to be the same for every chain domain $\tilde \Omega(\w)$ whose associated base domain $\tilde{\Omega}$ has geometric constants $\tilde{\rho}^*, \tilde{\kappa}^*, \tilde{\delta}^*, \tilde{\tau}^*, \tilde{w}^*$ satisfying
    \begin{align*}
\tilde{\rho}^* \leq \rho^*, \quad \tilde{\kappa}^* \leq \kappa^*, \quad \tilde{\delta}^* \geq \delta^*, \quad \tilde{\tau}^* \geq \tau^*, \quad \tilde{w}^* \geq w^*,
    \end{align*}
   where $\rho^*, \kappa^*, \delta^*, \tau^*, w^*$ are the geometric constants associated to the base domain $\Omega$.
\end{remark}

\subsubsection{Uniformity over  $\Omega(\w)$ as $\w$ changes}
Next, we explain how the geometric constants for a base domain $\Omega$ are uniform as one varies the width of the necks for the associated chain domains $\Omega(\w)$.

Let $\Omega$ be a base domain with geometric constants $\rho^*, \kappa^*, \delta^*, \tau^*, w^*$.  
Then, for any collection of widths $\w=\{I_{ij,k}\}$, we have $|\Omega(\w)|>A^*$ and $L(\w)<L^*$, 
where 
$$|\Omega(\w)|:=\text{Area}(\Omega(\w)) \qquad \text{and}\qquad L(\w):=\text{Length}(\pa\Omega(\w)).$$
Therefore, we observe the following control on the geometry of $\Omega(\w)$:
\begin{enumerate}
    \item \emph{Isoperimetric ratio.} The isoperimetric ratio $L(\w)^2/|\Omega(\w)|$ of $\Omega(\w)$ is bounded  above by $\rho^*$.
     \item \emph{Normalized curvature.} The curvature of each smooth segment of $\pa\Omega(\w)$ is bounded above by $\kappa^*/L(\w)$.
    \item \emph{Vertex control.}
     The vertex control constant $\delta^*$ provides the same control on the boundary of $\Omega(\w)$ as for $\pa\Omega$, with $L^*$ replaced by $L(\w)$ and $[-1,1]$ replaced by $I_{ij,k}$.    In particular, there is a lower bound on the interior and exterior angle of each vertex of $\pa\Omega(\w)$ in terms of $\delta^*$, and the number of vertices of $\pa\Omega(\w)$ is bounded by $1/\delta^*$.

    \item  \emph{Normalized cut-distance.} 
    For $\eta\leq L(\w)\delta^*$ and all $\ell$, the cut-distance, given by
    \begin{align*}
    \inf_{u\in K_{\eta,\ell}}\sup_{\delta>0}\left\{\delta: \;\dist\Big(\gamma(u)+sn(u),\pa\D_{\ell}\Big)=s \; \text{for all} \;s\in[0,\delta] \Big.\right\},
\end{align*}
is bounded from below by $\tau^*\eta$. As above, $\gamma:K_{\eta,\ell}\to\R^2$ is a parameterization of $\pa \D_{\ell}$ with the parts of $\pa\D_{\ell}$ in the discs of radius $\eta$ centered at each vertex of $\pa\D_{\ell}$ excluded, for a union of intervals $K_{\eta,\ell}$, and $n(u)$ is the inward unit normal at $\gamma(u)\in\pa\D_{\ell}$.
    
    \item \emph{Neck-width.}
    The constant $w^*$ provides control on the ratio between the maximum and minimum width of the neck $ \Q_{ij,k}(\w)$ for all choices of $i,j,k$, in the sense that
    \begin{align}\label{e:max-min width2}
        & \min_s \frac{\min_{t\in I_{ij,k}}|\pa_t\G_{ij,k}(s,t)|}{\max_{t\in I_{ij,k}}|\pa_t\G_{ij,k}(s,t)|}\geq w^*, \qquad  w^* \leq |\pa_s\G_{ij,k}(s,t)| \leq 1/w^*, \\ 
        & \hspace{2cm} \min_{(s,t)}\frac{\left|{\rm det} [\pa_t\G_{ij,k}(s,t), \pa_s \G_{ij,k}(s,t)]\right|}{|\pa_t\G_{ij,k}(s,t)||\pa_s\G_{ij,k}(s,t)|}\geq w^*. \notag        
    \end{align}
\end{enumerate}
\begin{remark}[Uniform control in $\w$]\label{r:geom constants}
Our main result, Theorem \ref{thm:Main}, produces a bound  for a Courant sharp eigenvalue $\mu_m(\w)$ of the form  $|\Omega(\w)|\mu_m(\w)<C$, where $C$ only depends on $\rho^*, \kappa^*, \delta^*, \tau^*, w^*$.  The reason for this being possible is that the constant $C$ depends only on the geometric features of $\Omega(\w)$ described in points (1)-(5) above.
\end{remark}

For the domains $\Sw$ from Figure \ref{fig:toy}, with $0<w<1$, we can take 
\begin{align*}
\rho^* = 50, \quad \kappa^* = 0, \quad \delta^* = \tfrac{1}{40}, \quad \tau^* = 1, \quad w^* =1. 
\end{align*}

\begin{remark}[Notation] From now on, all constants $C^*$, $C_1^*$, etc., appearing may depend on $\rho^*, \kappa^*, \delta^*, \tau^*, w^*$, but will be independent of the collection of widths $\w=\{I_{ij,k}\}$ and any other geometric quantities involving $\Omega(\w)$. Constants $C$, $C_1$ without asterisks will be absolute constants, independent of $\Omega(\w)$.
\end{remark}

\section{Strategy and ingredients for the proof of Theorem \ref{thm:Main}}
In Section \ref{sec:ouline} below, we describe an outline for the proof of Theorem \ref{thm:Main}. The proof relies on introducing a partition of the chain domain into several regions and on classifying the nodal domains of an eigenfunction in terms of which regions they touch. The partition is introduced in Section \ref{sec:partition} and the classification of the domains is presented in Section \ref{sec:classification}  

\subsection{Outline of the proof of Theorem \ref{thm:Main}}\label{sec:ouline}
Let $\Omega(\w)$ be a chain domain as in Definition \ref{defn:domain},
and let $\{u_k(\w)\}_k$ and $\{\mu_k(\w)\}_k$ be the Neumann eigenfunctions and eigenvalues for $\Omega(\w)$ introduced in \eqref{e:ev}. In order to prove Theorem \ref{thm:Main}, we will use the same underlying strategy of proof as the various Pleijel-type results described in the introduction: we will establish upper and lower bounds on the Neumann counting function
\begin{align} \label{eqn:counting1}
\mathcal{N}^N_{\Omega(\w)}(\mu) := \#\{k:\mu_k(\w)<\mu\},
\end{align}
when $\mu = \mu_m(\w)$ is a Courant sharp eigenvalue; that is, $\nu(u_m(\w))=m$, where we continue to write  $\nu(u_m(\w))$ for the number of nodal domains of $u_m(\w)$.

In this Courant sharp case, $\mu_k(\w) \neq \mu_m(\w)$ for $k<m$, and so $\mathcal{N}^N_{\Omega(\w)}(\mu_m(\w))=m-1.$  We therefore have
\begin{align} \label{eqn:counting2}
    \mathcal{N}^N_{\Omega(\w)}(\mu_m(\w))+1= m= \nu(u_m(\w)).
\end{align}
We find an upper bound on $\nu(u_m(\w))$ of the form $C_1^*\mu_m(\w)|\Omega(\w)|+O\left((\mu_m(\w)|\Omega(w)|)^{3/4}\right)$ from deriving lower bounds on the area of a nodal domain of $u_m(\w)$ (See Section \ref{sec:UB}). At the same time, a Weyl law with an explicit bound on the remainder then gives a lower bound on $ \mathcal{N}^N_{\Omega(\w)}(\mu_m(\w))$ of the form $C_2^*\mu_m(\w)|\Omega(\w)|+O\left((\mu_m(\w)|\Omega(w)|)^{3/4}\right)$ (See Section \ref{sec:LB}). The constants $C_1^*, C_2^*$ are explicit enough that we can argue $C_2^*>C_1^*$ and hence derive from \eqref{eqn:counting2} that $$\mu_m(\w)|\Omega(\w)|<C^*$$ as claimed. 
The detailed proof of Theorem \ref{thm:Main} is done  in Section \ref{sec:end}. The upper bound on $\nu(u_m(\w))$ will hold for any (not necessarily Courant sharp) Neumann eigenfunction, and will imply the limit in \eqref{e:main ub}.

\subsubsection{Strategy for obtaining the upper bound on $\nu(u_m(\w))$.}\label{sec:UB} 
To obtain lower bounds on the area of a nodal domain, we will adapt the strategy from \cite{gittins-lena2020} and \cite{lena-pleijel} by splitting each nodal domain $D$ of $u_m$ into four different categories, depending on where the $L^2(D)$-mass of $u_m$ is concentrated. Roughly speaking, we will estimate the number of nodal domains of $u_m$ in each of the following four cases:
\begin{enumerate}
    \item[i)] the majority of the mass is concentrated away from the necks $ \Q_{ij,k}(\w)$ and the boundaries of the domains $\D_\ell$;
    \item[ii)] some of the mass is concentrated away from the necks $ \Q_{ij,k}(\w)$ and near the smooth parts of the boundary of the domains $\D_\ell$;
    \item[iii)] some of the mass is concentrated near a vertex of $\D_\ell$ or the ends of the necks $ \Q_{ij,k}(\w)$;
    \item[iv)] some of the mass is concentrated in the necks $ \Q_{ij,k}(\w)$.
\end{enumerate}
Using the established techniques from \cite{gittins-lena2020} and \cite{lena-pleijel}, cases i) and ii) can be handled using the Faber-Krahn Theorem together with a reflection argument across the smooth part of the boundary of $\Omega(\w)$. We will recall this argument and define the above partitioning of the nodal domains in Sections \ref{sec:bulk} and  \ref{sec:partition} respectively.

The novelty of our work lies in the remaining cases. For case iii), in Section \ref{sec:corner} we will exploit properties of a Neumann eigenfunction near a corner in order to bound the number of such nodal domains near each vertex in $\Omega(\w)$. Finally for case iv), in Section \ref{sec:neck} we will prove a non-sharp version of the Faber-Krahn Theorem for thin cylinders in order to obtain a lower bound on the area of nodal domains contained in the neck. In particular, for sufficiently large eigenvalues $\mu_m(\w)$, the number of nodal domains in cases ii), iii), and iv) will be small compared to case i). 

\subsubsection{Strategy for obtaining the lower bound on  $\mathcal{N}^N_{\Omega(\w)}(\mu_m(\w))$.}\label{sec:LB}
For the lower bound on \eqref{eqn:counting2}, we will use a Weyl law with an explicit bound on the remainder. This comes from a Weyl remainder estimate given in \cite{vdberg-gittins2016} involving $M_{\Omega(\w)}(\delta)$, as defined in \eqref{e:M}. We then bound $M_{\Omega(\w)}(\delta)$ in terms of $\delta$ and the five geometric constants $\rho^*, \kappa^*, \delta^*, \tau^*, w^*$. This estimate will be given in Section \ref{sec:Weyl}.

\subsection{Partition of the chain domains} \label{sec:partition}

Let $\Omega(\w)$ be a chain domain as in Definition \ref{defn:domain}. As outlined in Section \ref{sec:UB}, we estimate the number of nodal domains by splitting our study into a series of cases. This is achieved by partitioning the domain. In this section, we define the partition, establish its required properties, and use it to define bulk, boundary, corner, and neck nodal domains.

 Given $\delta>0$, we partition $\Omega(\w)$ into $\bigcup_{j=0}^{4}\Omega_j^{\delta}(\w)$. Roughly speaking, 
 \begin{enumerate}
\item[-]  $\Omega_0^{\delta}(\w)$ is the part of $\Omega(\w)$ a distance $\delta$ away from the boundary;
\item[-] $\Omega_1^{\delta}(\w)$ is a $\delta$-neighborhood of the smooth part of $\pa\Omega(\w)$;
\item[-] $\Omega_2^{\delta}(\w)$ is a $\delta$-neighborhood of the vertices of $\D_{\ell}$.
 \end{enumerate}

In what follows, we continue to write $w_{ij,k}$ for the minimum neck widths introduced in \eqref{e:ws}.
 
 When $\delta$ is small compared to the minimum width $w_{ij,k}$ of the neck $ \Q_{ij,k}(\w)$, then $\Omega_2^{\delta}(\w)$ also contains a $\delta$-neighborhood of the vertices where the neck $ \Q_{ij,k}(\w)$ is joined to the domains $\D_{i}$, $\D_{j}$. However, when $\delta$ is large compared to the minimum width $w_{ij,k}$ of the neck $ \Q_{ij,k}(\w)$, then $\Omega_4^{\delta}(\w)$ contains a $\delta$-neighborhood of these vertices, and $\Omega_3^{\delta}(\w)$ then contains the rest of the neck $ \Q_{ij,k}(\w)$.
 
Throughout, we will work with $\delta>0$ satisfying
 \begin{align} \label{eqn:delta-partition}
     \delta \leq \min\left\{\frac{1}{20}L(\w)\delta^*,\frac{L(\w)}{\kappa^*\tau^*} \right\},
 \end{align}
 with $\delta^*$, $\kappa^*$, $\tau^*$ the vertex control, normalized curvature, and normalized cut-distance constants of $\Omega(\w)$. Recall that $L(\w)\delta^*$ gives a lower bound on the distance between vertices of $\pa\Omega(\w)$, and so this bound on $\delta$ guarantees that we can cleanly separate a $\delta$ neighborhood of each vertex of $\D_\ell$, and the rest of its boundary. The upper bound on $\delta$ of $L(\w)/(\kappa^*\tau^*)$ will also ensure that, after excluding a disc of radius $\delta$ centered at each vertex of $\D_\ell$, the cut-distance of the remaining part of $\pa \D_\ell$ is bounded from below by $\delta$. This will be important because it will allow us to apply a diffeomorphism to straighten this resulting part of the boundary.
 
We proceed to give the precise definition of the partition. 

 \begin{defn}[$\delta$-partition of a chain domain] \label{defn:partition}
Let $\Omega(\w)$ be a chain domain as in Definition \ref{defn:domain} and $\delta>0$ satisfy \eqref{eqn:delta-partition}. Then,  
the $\delta$-partition for $\Omega(\w)$ is defined as  
$$
\Omega(\w) = \bigcup_{j=0}^{4}\Omega_j^{\delta}(\w),
$$
where the following holds:
\begin{enumerate}
\item[1)] $\Omega_2^{\delta}(\w)$ contains a disc of radius $\delta$ around each vertex of $\D_\ell$ for all $1\leq \ell\leq M$.

\item[2)] For each $1\leq i\leq j\leq M$, $1\leq k \leq K_{ij}$ one of the following  holds.

\begin{enumerate}
\item[i)] {If $w_{ij,k}>4\delta$}, then 
\begin{itemize}
\item $\Omega_2^{\delta}(\w)$ contains the disc of radius $\delta$ around each vertex formed by the neck $ \Q_{ij,k}(\w)$ and the domains $\D_i$, $\D_j$;
\item $\Omega_{1}^{\delta}(\w)$ contains the part of $ \Q_{ij,k}(\w)$ near its boundary, given by
$$
    \Big\{x\in  \Q_{ij,k}(\w)\,:\,\dist(x,\pa  \Q_{ij,k}(\w))<\tfrac{3}{4}\tau^*\delta\Big \}\backslash\Omega_2^{\delta}(\w).
$$
\end{itemize}

\item[ii)] If $w_{ij,k}\leq 4\delta$, then 
\begin{itemize}
\item $\Omega_{4}^{\delta}(\w)$ contains the $\delta$-neighborhood of the ends of the neck $\G_{ij,k}(\{0\}\times I_{ij,k})$ and $\G_{ij,k}(\{L_{ij,k}\}\times I_{ij,k})$;
\item $\Omega_3^{\delta}(\w)$ then contains the rest of the neck, $ \Q_{ij,k}(\w)\backslash\Omega_{4}^{\delta}(\w)$.
\end{itemize}

\end{enumerate}

\item[3)]$\Omega_1^{\delta}(\w)$ includes 
\begin{align*}
    \Big\{x\in \bigcup_{\ell=1}^M \D_\ell:\,\dist(x,\pa\Omega(\w))<\tfrac{3}{4}\tau^*\delta\Big\}\backslash\left(\Omega_2^{\delta}(\w)\cup \Omega_4^{\delta}(\w)\right).
\end{align*}
\item[4)] $\Omega_0^{\delta}(\w)$ is defined by
\begin{align*}
    \Omega_0^{\delta}(\w) & =  \Omega(\w)\backslash \left(\Omega_1^{\delta}(\w) \cup\Omega_2^{\delta}(\w) \cup \Omega_3^{\delta}(\w) \cup \Omega_4^{\delta}(\w)\right).
\end{align*}

\end{enumerate}
\end{defn}
See Figure \ref{fig:partition} for an example of the possible $\delta$-partitions for the model domain $\Sw$ introduced in Figure \ref{fig:toy}, depending on the relative size of  $\delta$, and the neck width $w$.

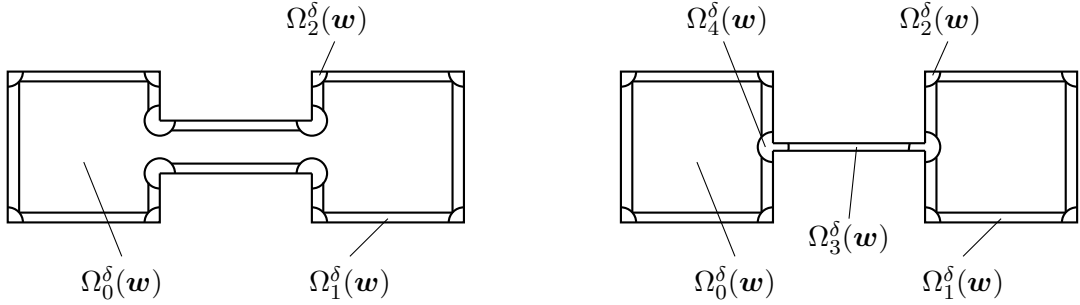
\begin{figure}[h]
    \centering
    
\begin{tikzpicture}
\draw[thick] (-1,0.35) -- (1,0.35);
\draw[thick] (-1,-0.35) -- (1,-0.35);
\draw[thick] (1,0.35) -- (1,1) -- (3,1) -- (3,-1) -- (1,-1) -- (1,-0.35);
\draw[thick] (-1,0.35) -- (-1,1) -- (-3,1) -- (-3,-1) -- (-1,-1) -- (-1,-0.35);

\draw[thick] (-3,0.8) arc (-90:0:0.2);
\draw[thick] (-2.8,-1) arc (0:90:0.2);
\draw[thick] (-1,-0.8) arc (90:180:0.2);
\draw[thick] (-1.2,1) arc (180:270:0.2);
\draw[thick] (1,0.8) arc (-90:0:0.2);
\draw[thick] (1.2,-1) arc (0:90:0.2);
\draw[thick] (3,-0.8) arc (90:180:0.2);
\draw[thick] (2.8,1) arc (180:270:0.2);
\draw[thick] (-1,0.55) arc (90:360:0.2);
\draw[thick] (-0.8,-0.35) arc (0:270:0.2);
\draw[thick] (0.8,0.35) arc (-180:90:0.2);
\draw[thick] (1,-0.55) arc (-90:180:0.2);
\draw[thick] (2.85,0.87) -- (2.85,-0.87);
\draw[thick] (-2.85,0.87) -- (-2.85,-0.87);
\draw[thick] (1.15,0.87) -- (2.85,0.87);
\draw[thick] (1.15,-0.87) -- (2.85,-0.87);
\draw[thick] (-1.15,0.87) -- (-2.85,0.87);
\draw[thick] (-1.15,-0.87) -- (-2.85,-0.87);
\draw[thick] (1.15,0.87) -- (1.15,0.47);
\draw[thick] (1.15,-0.87) -- (1.15,-0.47);
\draw[thick] (-1.15,0.87) -- (-1.15,0.47);
\draw[thick] (-1.15,-0.87) -- (-1.15,-0.47);
\draw[thick] (-0.85,0.22) -- (0.85,0.22);
\draw[thick] (-0.85,-0.22) -- (0.85,-0.22);
\node at (-1.5,-1.8) {$\Omega_0^{\delta}(\w)$};
\node at (1.5,-1.8) {$\Omega_1^{\delta}(\w)$};
\node at (1.2,1.7) {$\Omega_2^{\delta}(\w)$};
\draw (-1.5,-1.45) -- (-2,-0.2);
\draw (1.5,-1.45) -- (2,-0.95);
\draw (1.3,1.4) -- (1.1,0.95);
\end{tikzpicture}    
\hspace{0.7in}    
\begin{tikzpicture}
\draw[thick] (-1,0.05) -- (1,0.05);
\draw[thick] (-1,-0.05) -- (1,-0.05);
\draw[thick] (1,0.05) -- (1,1) -- (3,1) -- (3,-1) -- (1,-1) -- (1,-0.05);
\draw[thick] (-1,0.05) -- (-1,1) -- (-3,1) -- (-3,-1) -- (-1,-1) -- (-1,-0.05);

\draw[thick] (-3,0.8) arc (-90:0:0.2);
\draw[thick] (-2.8,-1) arc (0:90:0.2);
\draw[thick] (-1,-0.8) arc (90:180:0.2);
\draw[thick] (-1.2,1) arc (180:270:0.2);
\draw[thick] (1,0.8) arc (-90:0:0.2);
\draw[thick] (1.2,-1) arc (0:90:0.2);
\draw[thick] (3,-0.8) arc (90:180:0.2);
\draw[thick] (2.8,1) arc (180:270:0.2);
\draw[thick] (-1,0.2) arc (90:270:0.2);
\draw[thick] (1,-0.2) arc (-90:90:0.2);
\draw[thick] (-0.8,-0.05) arc (-15:15:0.2);
\draw[thick] (0.8,0.05) arc (165:180:0.4);
\draw[thick] (2.85,0.87) -- (2.85,-0.87);
\draw[thick] (-2.85,0.87) -- (-2.85,-0.87);
\draw[thick] (1.15,0.87) -- (2.85,0.87);
\draw[thick] (1.15,-0.87) -- (2.85,-0.87);
\draw[thick] (-1.15,0.87) -- (-2.85,0.87);
\draw[thick] (-1.15,-0.87) -- (-2.85,-0.87);
\draw[thick] (1.15,0.87) -- (1.15,0.14);
\draw[thick] (1.15,-0.87) -- (1.15,-0.14);
\draw[thick] (-1.15,0.87) -- (-1.15,0.14);
\draw[thick] (-1.15,-0.87) -- (-1.15,-0.14);
\node at (-1.5,-1.8) {$\Omega_0^{\delta}(\w)$};
\node at (1.5,-1.8) {$\Omega_1^{\delta}(\w)$};
\node at (1.2,1.7) {$\Omega_2^{\delta}(\w)$};
\node at (0,-1.2) {$\Omega_3^{\delta}(\w)$};
\node at (-1.6,1.7) {$\Omega_4^{\delta}(\w)$};
\draw (-1.5,-1.45) -- (-2,-0.2);
\draw (1.5,-1.45) -- (2,-0.95);
\draw (0,-0.95) -- (0.1,0);
\draw (-1.65,1.4) -- (-1.1,0);
\draw (1.3,1.4) -- (1.1,0.95);
\end{tikzpicture}
    \caption{The two $\delta$-partitions of the chain domain $\Omega(\w)$ introduced in Figure \ref{fig:toy}, depending on the relative size of $w$ and $\delta$.} 
    \label{fig:partition}
\end{figure}

We will later work with a partition of unity adapted to the $\delta$-partition of a chain domain. Before introducing it, we present the reader with a lemma that allows us to straighten the boundary of the domain. This lemma will allow us to define the partition of unity and will also be used in Section~\ref{sec:bulk}.

\begin{lem} \label{lem:boundary1}
Let $\Omega(\w)$ be a chain domain. Fix $\eta>0$ with $\eta \leq L(\w)\min\left\{\delta^*,\tfrac{1}{\kappa^*\tau^*}\right\}$. Given a side of $\D_{\ell}$, denote $b_{\eta}$ to be the part of this side a distance at least $\eta$ from the vertices of $\D_{\ell}$. Letting $\{\gamma(s):s\in I
\}$ be a parameterization of $b_{\eta}$, and $n(s)$ to be the unit inward normal to $\pa \D_{\ell}$ at $\gamma(s)$, a neighborhood of $b_{\eta}$ in $\D_{\ell}$ can be straightened in the following sense.

The function
\begin{align*}
    F:I\times[0,\tfrac{3}{4}\tau^*\eta]  \to \D_{\ell}, \qquad 
    (s,t)\mapsto (x,y) = \gamma(s) + tn(s),
\end{align*}
is a diffeomorphism onto its image. Moreover, the Jacobian of this change of variables is bounded from above and below by
\begin{align*}
    1 + \tfrac{3}{4}L(\w)^{-1}\tau^*\kappa^*\eta \leq \tfrac{7}{4}, \qquad     1 - \tfrac{3}{4}L(\w)^{-1}\tau^*\kappa^*\eta \geq \tfrac{1}{4}
\end{align*}
respectively.
\end{lem}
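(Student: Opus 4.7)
The plan is to compute the Jacobian of $F$ directly using the two-dimensional Frenet formulas, read off the Jacobian bounds from the normalized curvature constant, and then promote the resulting local diffeomorphism property to a global one using the normalized cut-distance constant.

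First, since $\gamma$ is parameterized by arc length, its unit tangent $T(s) = \gamma'(s)$ satisfies the planar Frenet relation $n'(s) = -\kappa(s) T(s)$, where $\kappa(s)$ is the signed curvature of $\pa\D_\ell$ at $\gamma(s)$. Differentiating $F(s,t) = \gamma(s) + t n(s)$ yields $\pa_s F(s,t) = (1 - t\kappa(s)) T(s)$ and $\pa_t F(s,t) = n(s)$, so orthonormality of $\{T(s), n(s)\}$ gives $|\det DF(s,t)| = |1 - t\kappa(s)|$. The normalized curvature constant provides $|\kappa(s)| \leq \kappa^*/L(\w)$; combined with $t \leq \tfrac{3}{4}\tau^*\eta$ and the hypothesis $\eta \leq L(\w)/(\kappa^*\tau^*)$, this gives $|t\kappa(s)| \leq \tfrac{3}{4} L(\w)^{-1}\tau^*\kappa^*\eta \leq \tfrac{3}{4}$, establishing the claimed upper and lower bounds, and in particular uniform strict positivity of the Jacobian. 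The inverse function theorem then makes $F$ a local diffeomorphism on $I \times [0, \tfrac{3}{4}\tau^*\eta]$.

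The remaining task, and the main obstacle, is global injectivity, for which the normalized cut-distance constant is the decisive input. Since $b_\eta \subset K_{\eta,\ell}$, the cut-distance at every $\gamma(s)$ with $s \in I$ is at least $\tau^*\eta$, and the defining property of the cut-distance gives $\dist(F(s,t),\pa\D_\ell) = t$ for every $(s,t) \in I \times [0, \tfrac{3}{4}\tau^*\eta]$. Hence if $F(s_1,t_1) = F(s_2,t_2) = p$, then $t_1 = \dist(p,\pa\D_\ell) = t_2 =: t$. Supposing toward contradiction that $s_1 \neq s_2$, the identity $\gamma(s_1) - \gamma(s_2) = t(n(s_2) - n(s_1))$ forces $n(s_1) \neq n(s_2)$. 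For $s > t$, setting $p_s := \gamma(s_1) + s n(s_1)$ and using $p_s - \gamma(s_2) = t n(s_2) + (s-t) n(s_1)$, one finds
\begin{align*}
|p_s - \gamma(s_2)|^2 = s^2 - 2t(s-t)\bigl(1 - \langle n(s_1), n(s_2)\rangle\bigr) < s^2.
\end{align*}
Thus $\dist(p_s,\pa\D_\ell) < s$ for every $s > t$, contradicting the cut-distance at $\gamma(s_1)$ being at least $\tau^*\eta > t$. This forces $s_1 = s_2$, so $F$ is injective. Combined with the local-diffeomorphism property, this upgrades $F$ to a diffeomorphism onto its image. The Jacobian estimate is a direct calculation; the genuinely delicate ingredient is the cut-distance-based argument that promotes local to global injectivity.
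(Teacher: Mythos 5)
Your proof is correct and follows the same approach the paper invokes by reference: the paper proves this lemma solely by observing that the cut-distance of $b_\eta$ is at least $\tau^*\eta$ and then citing the argument in Section~3 of Gittins--L\'ena without reproducing it. Your write-up fills in exactly those omitted details — the Frenet computation $|\det DF| = |1 - t\kappa(s)|$ giving the Jacobian bounds, and the cut-distance argument for global injectivity (noting $t_1 = t_2 = \dist(p,\pa\D_\ell)$ and then the strict inequality $|p_s - \gamma(s_2)| < s$ forcing $s_1 = s_2$) — so there is no genuine divergence of method. One tiny point worth adding: your injectivity argument requires $t>0$ to make the displayed inequality strict; the $t=0$ case should be dispatched separately by the injectivity of $\gamma$ itself, which is immediate.
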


\begin{proof} By the definition of the normalized cut-distance constant $\tau^*$, since $\eta \leq L(\w)\delta^*$, the cut-distance of $b_{\eta}$ is bounded from below by $\tau^*\eta$. The proof of this lemma then follows in an identical way to the argument in Section 3 of \cite{gittins-lena2020}, and so we omit the details.
\end{proof}
\begin{remark} \label{rem:neck-straighten}
For each neck $\Q_{ij,k}(\w)$, 
we define the diffeomorphism $F_{ij,k}$ by
\begin{align*}
    F_{ij,k}:[0,L_{ij,k}]&\times (-w_{ij,k},w_{ij,k}) \to \Q_{ij,k}(\w) \\
    F_{ij,k}(s,t) & = \G_{ij,k}\left(s,t_1 + (t+w_{ij,k}) |I_{ij,k}|/(2w_{ij,k})\Big.\right),
\end{align*}
where we recall $t_1 < 0$ defines an endpoint of the interval $I_{ij,k}$.  By the definition of $w_{ij,k}$ in \eqref{e:ws} and the neck-width constant $w^*$, $|\pa_t\G_{ij,k}(s,t)|$ is bounded from above and below by $w_{ij,k}/|I_{ij,k}|$ multiplied by constants depending only on $w^*$. Therefore, the Jacobian of $F_{ij,k}$ is bounded in terms of the neck-width constant $w^*$, and so in particular this can be used to straighten the top and bottom boundaries of $\Q_{ij,k}(\w)$.
\end{remark}

We proceed to introduce the partition of unity associated to the $\delta$-partition of a chain domain.
\begin{lem} \label{lem:partition}
Let $\Omega(\w)$ be a chain domain.
There exists a constant $C^*>0$ such the following holds. For each $\delta>0$ satisfying \eqref{eqn:delta-partition} there exist  smooth functions $\{\chi_j^{\delta}\}_{j=0}^4$ associated to the $\delta$-partition  $\{\Omega_j^{\delta}(\w)\}_{j=0}^4$ of $\Omega(\w)$ such that  
$$
\sum_{j=0}^{4} \big(\chi_j^{\delta}\big)^2 \equiv 1 \quad \text{on}\;\;\Omega(\w),
$$
\begin{enumerate}

    \item  $\chi_j^{\delta}\equiv 1$ on $\Omega_j^{\delta}(\w)$ for $j=0,3$,
    
    \item  $\chi_1^{\delta}\equiv 1$ on $\Omega_1^{\delta/2}(\w)$,
    
    \item $\chi_j^{\delta}\equiv 1$ on  $\Omega_j^{\delta/4}(\w)$ for $j=2,4$,
    
    \item  $\chi_j^{\delta} \in H^1 (\Omega(\w)) $ with $\big|\nabla \chi_j^{\delta}\big| \leq C^*\delta^{-1}$ \; a.e. on $\Omega(\w)$ for $j=0,\dots,4$.
    
\end{enumerate}
\end{lem}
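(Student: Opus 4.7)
The plan is to construct each $\chi_j^{\delta}$ from a non-negative smooth precursor $\tilde\chi_j^{\delta}$, then normalize at the end. Concretely, I will build smooth functions $\tilde\chi_0^{\delta},\dots,\tilde\chi_4^{\delta}$ on $\Omega(\w)$ with values in $[0,1]$ such that (a) each $\tilde\chi_j^{\delta}$ equals $1$ on the prescribed inner region of the lemma (namely $\Omega_0^{\delta}$, $\Omega_1^{\delta/2}$, $\Omega_2^{\delta/4}$, $\Omega_3^{\delta}$, $\Omega_4^{\delta/4}$), (b) each is supported in a slightly fattened copy of $\Omega_j^{\delta}$, (c) $|\nabla \tilde\chi_j^{\delta}| \leq C^*\delta^{-1}$ almost everywhere, and (d) on every transition region exactly two of the $\tilde\chi_j^{\delta}$ are nonzero and satisfy $\tilde\chi_i^{\delta}+\tilde\chi_j^{\delta}=1$. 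Then the sum of squares $S := \sum_k (\tilde\chi_k^{\delta})^2$ is bounded below by $\tfrac14$ on all of $\Omega(\w)$, since at each point either some $\tilde\chi_j^{\delta}=1$ or two of them sum to $1$ (so the larger is at least $1/2$). Setting $\chi_j^{\delta} := \tilde\chi_j^{\delta}/\sqrt{S}$ produces the desired partition of unity.

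For the precursors I use smooth cutoffs of distance functions in the straightened coordinates supplied by Lemma \ref{lem:boundary1} and Remark \ref{rem:neck-straighten}. Fix a standard cutoff $\psi\in C^\infty(\R)$ with $\psi\equiv 1$ on $(-\infty,\tfrac14]$ and $\psi\equiv 0$ on $[1,\infty)$. Near a smooth boundary arc of some $\D_\ell$ use the normal coordinate $t$ from Lemma \ref{lem:boundary1}: set $\tilde\chi_1^{\delta} = \psi(t/\delta - \tfrac12)$ on the $\tfrac34\tau^*\delta$-strip, so that $\tilde\chi_1^{\delta}\equiv 1$ for $t\leq \tfrac34\delta$ and $\tilde\chi_1^{\delta}\equiv 0$ for $t\geq \tfrac32\delta$, and define $\tilde\chi_0^{\delta} = 1-\tilde\chi_1^{\delta}$ on this strip (and $\tilde\chi_0^{\delta}=1$ on $\Omega_0^{\delta}$). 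The boundedness of the Jacobian of $F$ in $[\tfrac14,\tfrac74]$ together with $|\psi'|\leq C$ gives $|\nabla\tilde\chi_1^{\delta}|\leq C^*/\delta$. Near a vertex $p\in \pa\D_\ell$, set $\tilde\chi_2^{\delta}=\psi(\dist(\cdot,p)/\delta)$ on the ball of radius $\delta$, and glue to the smooth-boundary piece by using $1-\tilde\chi_2^{\delta}$ to multiplicatively modulate $\tilde\chi_1^{\delta}$ outside the inner vertex disc. In a wide neck ($w_{ij,k}>4\delta$) the same construction applies on the boundary strip. In a thin neck ($w_{ij,k}\leq 4\delta$), use the straightening $F_{ij,k}$ from Remark \ref{rem:neck-straighten}, and define $\tilde\chi_3^{\delta},\tilde\chi_4^{\delta}$ to depend only on the arclength coordinate $s$: $\tilde\chi_4^{\delta} = \psi(s/\delta)+\psi((L_{ij,k}-s)/\delta)$ (with disjoint bumps near $s=0$ and $s=L_{ij,k}$) and $\tilde\chi_3^{\delta}=1-\tilde\chi_4^{\delta}$. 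Since the Jacobian of $F_{ij,k}$ is controlled by $w^*$, the required gradient bound follows.

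The main technical obstacle is orchestrating these constructions consistently at the interfaces, in particular where a smooth-boundary strip abuts a vertex disc and where (in the wide-neck case) the $\tfrac34\tau^*\delta$-strip along the neck sides joins the vertex discs at the neck junctions. This is exactly where hypothesis \eqref{eqn:delta-partition} is used: the bound $\delta\leq \tfrac{1}{20}L(\w)\delta^*$ ensures that the vertex discs of radius $\delta$ are well separated and that the straightened coordinates of Lemma \ref{lem:boundary1} extend to a neighborhood of each smooth arc away from its vertices, while the bound $\delta\leq L(\w)/(\kappa^*\tau^*)$ makes the Jacobian bounds of Lemma \ref{lem:boundary1} available throughout. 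With these separations in hand, the interfacing step is just a multiplicative gluing: write $\tilde\chi_1^{\delta} = (1-\tilde\chi_2^{\delta})\tilde\chi_1^{\delta,\mathrm{raw}}$, and analogously for $\tilde\chi_3^{\delta}$ and $\tilde\chi_4^{\delta}$ at the neck ends. Once these pieces are in place, the normalization $\chi_j^{\delta}=\tilde\chi_j^{\delta}/\sqrt{S}$ yields smooth functions (since $S\geq \tfrac14$) satisfying $\sum_j (\chi_j^{\delta})^2\equiv 1$; the claims $\chi_j^{\delta}\equiv 1$ on the prescribed inner regions follow because there $\tilde\chi_j^{\delta}=1$ and $\tilde\chi_k^{\delta}=0$ for $k\neq j$. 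The gradient estimate $|\nabla\chi_j^{\delta}|\leq C^*\delta^{-1}$ is then a direct consequence of the quotient rule, the lower bound on $S$, and the precursor gradient bounds.
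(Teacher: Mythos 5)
Your construction is essentially the same as the paper's: both rely on the straightening diffeomorphisms from Lemma \ref{lem:boundary1} and Remark \ref{rem:neck-straighten} to get gradient bounds of size $\delta^{-1}$, and both produce a squared partition of unity by the standard trick (you normalize the precursors by $\sqrt{S}$; the paper defines $\chi_3^\delta$ and $\chi_0^\delta$ via square roots so the squares sum to $1$). One small point you elide is that the two straightening charts (boundary strip of $\D_\ell$ versus neck chart $F_{ij,k}$) need not patch $C^1$-smoothly at the interface, which is exactly why the paper's item (4) only claims $H^1$ regularity with an a.e.\ gradient bound rather than genuine smoothness; your multiplicative-gluing device plausibly sidesteps this, but your proof should either justify that or settle for the $H^1$ conclusion as the paper does.
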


\begin{proof}
This result follows from the definition of the $\delta$-partition, and the straightening of the smooth parts of the boundary given in Lemma \ref{lem:boundary1} below:

First, the cut-off function $\chi_2^\delta$ is straightforward to define as radial functions, centered at the points used to define $\Omega_2^\delta(\w)$, and on a length scale comparable to $\delta$.

To define $\chi_1^{\delta}$, we use the upper bound on $\delta$ from \eqref{eqn:delta-partition}. This allows us to apply Lemma \ref{lem:boundary1} above with $\eta = \delta$. Using the diffeomorphism $F$ from this lemma straightens each part of the side of $\D_{\ell}$ in $\Omega_1^\delta(\w)$, via a change of variables with a bounded Jacobian. We can also use the diffeomorphism from Remark \ref{rem:neck-straighten} to straighten the top and bottom boundaries of $\Q_{ij,k}(\w)$ in the case where $w_{ij,k}>4\delta$. It is then straightforward to define the cut-off function $\chi_1^\delta$ with the desired properties.

This straightening of the sides of $\D_{\ell}$ and the necks $\Q_{ij,k}(\w)$ also allow for the function $\chi_4^{\delta}$ to be defined on a length scale comparable to $\delta$, with the required properties. Note that since the neck and sides are straightened using two  different diffeomorphisms, we can use them to define a continuous cut-off function $\chi_4^{\delta}$ in $H^1(\Omega(\w))$, with a possibly discontinuous derivative at the intersection of their supports. This is the reason for the almost everywhere nature of the pointwise bound stated in $(4)$ above.

The function $\chi_3^{\delta}$ is then defined with support in the necks $ \Q_{ij,k}(\w)$ with $w_{ij,k}\leq 4\delta$, and so that 
\begin{align*}
    (\chi_3^\delta)^2+(\chi_4^\delta)^2 \equiv1 \text{ on }  \Q_{ij,k}(\w).
\end{align*}
Finally, $\chi_0^{\delta}$ is defined so that $\sum_{j=0}^{4}(\chi_j^{\delta})^2 \equiv 1$ on $\Omega(\w)$ as required.
\end{proof}


\subsection{Classification of the nodal domains}\label{sec:classification}
Let $u$ be a Neumann eigenfunction of $\Omega(\w)$, not necessarily Courant sharp, with eigenvalue $\mu$, and let $D$ be one of its nodal domains. Fix $\epsilon, \delta > 0$.  We will use $\epsilon$ to measure the extent to which the $L^2(D)$-mass of $u$ is concentrated away from the boundary. As in Definition \ref{defn:partition}, $\delta$ is used to partition the neighborhoods of different parts of the boundary of $\Omega(\w)$. We will eventually choose $\delta$ in terms of the area of $\Omega(\w)$ and the eigenvalue $\mu$. We follow a similar framework to that in \cite{lena2015courant} and \cite{gittins-lena2020}, but with more regions.

Let $\{\chi_j^{\delta}\}_{j=0}^4$ be the associated partition of unity to the $\delta$-partition  $\{\Omega_j^{\delta}(\w)\}_{j=0}^4$ of $\Omega(\w)$ (see Lemma \ref{lem:partition}).
We decompose
\begin{equation*}
    u = \sum_{j=0}^4u_j ,\qquad u_j := \chi_j^\delta u.
\end{equation*}
So, $u_0$ is localized in the interior, $u_1$ is localized near the smooth part of the boundary, $u_2$ is localized near corners of $\D_\ell$, $u_3$ localized near necks $\Q_{ij,k}(\w)$, and $u_4$ is localized near where the necks are joined to the domains $\D_\ell$.  

For $\epsilon>0$ and $\delta > 0$, 
the collection of bulk, boundary, corner, and neck nodal domains will be denoted by 
\begin{equation}\label{e:dom}
\mathcal{V}^{\delta}_0(\eps; u), \;\mathcal{V}^{\delta}_1(\eps;u), \;\mathcal{V}^{\delta}_2(\eps;u),\; \mathcal{V}^{\delta}_3(\eps;u),
\end{equation}
respectively, and defined in the following way: 
\begin{itemize}
\item \textit{bulk nodal domains:} $D \in \mathcal{V}^{\delta}_0(\eps; u)$ if 
\begin{equation}\label{e:bulk nd}
\|u_0\|_{L^2(D)}^2\geq (1-\epsilon) \|u\|_{L^2(D)}^2.
\end{equation}

\item \textit{boundary nodal domains:}  $D \in \mathcal{V}^{\delta}_1(\eps; u)$ if 
\begin{equation}\label{e:boundary nd}
\|u_1\|_{L^2(D)}^2\geq \tfrac{1}{4}\varepsilon  \|u\|_{L^2(D)}^2.
\end{equation}

\item  \textit{corner nodal domains:} $D \in \mathcal{V}^{\delta}_2(\eps; u)$ if 
\begin{equation}\label{e:corner nd}
\|u_2\|_{L^2(D)}^2\geq \tfrac{1}{4}\varepsilon  \|u\|_{L^2(D)}^2 \quad
\text{or} \quad
\|u_4\|_{L^2(D\cap \cup_\ell \D_\ell)}^2 \geq \tfrac{1}{8}\eps\|u\|_{L^2(D)}^2.
\end{equation}

\item  \textit{neck nodal domains:} $D \in \mathcal{V}^{\delta}_3(\eps; u)$ if 
\begin{equation}\label{e:neck nd}
\|u_3\|_{L^2(D)}^2\geq \tfrac{1}{4}\varepsilon  \|u\|_{L^2(D)}^2 \quad
\text{or} \quad
\|u_4\|_{L^2(D\cap \cup  \Q_{ij,k}(\w))}^2 \geq \tfrac{1}{8}\eps\|u\|_{L^2(D)}^2.
\end{equation}
\end{itemize}

For $j=0,1,2,3$ we write
\begin{equation}\label{e:dom number}
\nu^{\delta}_j(\eps; u):= \# \mathcal{V}^{\delta}_j(\eps;u)
\end{equation}
for the number of domains in each class.

Note that a given nodal domain may fall into more than one of these categories, but we always have that the total number of nodal domains for $u$ is bounded by
\begin{equation}\label{e: nd bd}
\nu(u) \leq \sum_{j=0}^3\nu^{\delta}_j(\eps; u).
\end{equation}

\subsubsection{Green's formula for nodal domains}
A key ingredient in obtaining an upper bound on the number of each type of nodal domain will be the Faber-Krahn theorem applied to regions where we have an upper bound on its first Dirichlet eigenvalue. In order to obtain this upper bound, we will need that the following Green's formula holds for each nodal domain.
\begin{lem} \label{lem:Green}
Any nodal domain $D$ of a Neumann eigenfunction $u$ of\, $\Omega(\w)$, with eigenvalue $\mu$, satisfies
\begin{align*}
    \int_{D}\left|\nabla u\right|^2 = \mu \int_{D}u^2.
\end{align*}
\end{lem}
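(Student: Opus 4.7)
The plan is to derive the identity by using the extension of $u|_D$ by zero as a test function in the weak formulation of the Neumann eigenvalue problem. Recall that $u \in H^1(\Omega(\w))$ is a weak solution of \eqref{e:ev}, meaning
\[
\int_{\Omega(\w)} \nabla u \cdot \nabla \phi \,dx = \mu \int_{\Omega(\w)} u \phi \,dx
\]
for every $\phi \in H^1(\Omega(\w))$; the Neumann boundary condition is built into this weak formulation and requires no separate handling. Writing $v$ for the function equal to $u$ on $D$ and to $0$ on $\Omega(\w) \setminus D$, if we can justify $v \in H^1(\Omega(\w))$ with weak gradient $\mathbf{1}_D \nabla u$, then substituting $\phi = v$ yields $\int_D |\nabla u|^2 = \mu \int_D u^2$ immediately.

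The main obstacle is therefore establishing the $H^1$ regularity of $v$. The nodal set of a Neumann eigenfunction on a chain domain is not a priori a smooth hypersurface, and $D$ may touch $\partial\Omega(\w)$ along smooth arcs, vertices of the $\D_\ell$, or the junctions between $\D_\ell$ and a neck $\Q_{ij,k}(\w)$, so one cannot invoke any standard statement about restriction to smoothly bounded subdomains. I would proceed by a Lipschitz truncation: for $\eta > 0$ set
\[
v_\eta := \mathrm{sgn}(u)\,(|u|-\eta)_+\,\mathbf{1}_D.
\]
Interior elliptic regularity gives $u \in C^\infty(\Omega(\w))$, and $D$ is by definition a connected component of the open set $\{u \neq 0\}$, so $\partial D \cap \Omega(\w) \subset \{u = 0\}$. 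Consequently the support of $v_\eta$ inside $\Omega(\w)$ is the closed set $\{x \in D : |u(x)| \geq \eta\}$, which is bounded away from the interior part of $\partial D$. On this support $v_\eta$ coincides with the globally defined Lipschitz truncation $\mathrm{sgn}(u)(|u|-\eta)_+ \in H^1(\Omega(\w))$, and the extension by zero across $\partial D \cap \Omega(\w)$ preserves $H^1$ regularity because every point of $\partial D \cap \Omega(\w)$ has an open neighborhood on which $v_\eta$ vanishes identically.

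Letting $\eta \to 0$, dominated convergence gives $v_\eta \to v$ in $L^2(\Omega(\w))$, while the weak gradients $\nabla v_\eta = \mathbf{1}_{\{|u| > \eta\}}\mathbf{1}_D \nabla u$ converge to $\mathbf{1}_D \nabla u$ in $L^2(\Omega(\w))$. Hence $v \in H^1(\Omega(\w))$ with $\nabla v = \mathbf{1}_D \nabla u$. Testing the weak formulation against this $v$ gives
\[
\int_D |\nabla u|^2 \,dx = \int_{\Omega(\w)} \nabla u \cdot \nabla v \,dx = \mu \int_{\Omega(\w)} u v \,dx = \mu \int_D u^2 \,dx,
\]
as required. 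No boundary terms ever appear: on $\partial\Omega(\w) \cap \overline D$ the Neumann condition is absorbed into the weak formulation, and on $\partial D \cap \Omega(\w)$ the function $u$ (and hence $v$) vanishes. The only nontrivial input is the truncation/approximation argument used to place $v$ in $H^1(\Omega(\w))$, which is where the possibly irregular geometry of the nodal set is handled.
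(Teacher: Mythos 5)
Your proof is correct, and it takes a genuinely different route from the paper. The paper invokes Grisvard's Green identity for $C^{1,1}$ curvilinear polygons, then handles the case where $\overline{D}$ contains a vertex of $\Omega(\w)$ by a separate argument: it quotes Wigley's pointwise estimates near a corner of interior angle $\theta$ ($u$ is $C^{0,\pi/\theta}$ and $|\nabla u|=O(r^{\pi/\theta-1})$), excises small discs around the vertices, applies Green's formula on the truncated nodal domain, and sends the excision radii to zero, using the pointwise bounds to kill the boundary integrals on the small arcs. You instead feed the extension-by-zero $v = u\,\mathbf{1}_D$ directly into the variational formulation of the Neumann problem, so the only thing to verify is $v \in H^1(\Omega(\w))$ with $\nabla v = \mathbf{1}_D\nabla u$. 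Your truncation $v_\eta = \mathrm{sgn}(u)(|u|-\eta)_+\,\mathbf{1}_D$ does this cleanly: by interior smoothness of $u$ and the topological fact that $\partial D\cap\Omega(\w)\subset\{u=0\}$ (a limit of points of $D$ that lies in $\Omega(\w)$ but not in $D$ cannot be a point where $u\neq 0$, since that would put it in the open component $D$), the closed set $\{x\in D:|u(x)|\geq\eta\}$ is contained in $D$, so $v_\eta$ agrees locally with the global $H^1$ truncation $\mathrm{sgn}(u)(|u|-\eta)_+$ on an open set containing its support and vanishes identically near the interior nodal boundary; hence $v_\eta\in H^1(\Omega(\w))$, and $\eta\to 0$ goes through by dominated convergence. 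This is essentially the Bérard--Meyer mechanism from the Dirichlet setting, adapted to Neumann by dropping the compact-support requirement. What the paper's proof buys is explicit pointwise control on $u$ and $\nabla u$ near corners; what yours buys is economy: no corner regularity, no Grisvard identity, no excision-and-limit, and no need to know that $\partial D$ is a $C^{1,1}$ curvilinear polygon --- the only inputs are the weak formulation, interior elliptic regularity, and the elementary topology of nodal components.
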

\begin{proof}
To prove the lemma, we will use the following version of Green's identity, given in \cite[Lemma 1.5.3.8]{grisvard2011elliptic}: Let $\Omega$ be a bounded, open set in $\mathbb{R}^2$, with boundary $\pa\Omega$ given by a $C^{1,1}$ curvilinear polygon. Then, for $v_1\in H^2(\Omega)$, $v_2\in H^1(\Omega)$, 
\begin{equation} \label{eqn:Green1}
    \int_{\Omega}(\Delta v_1)v_2 = -\int_{\Omega}\nabla v_1\cdot \nabla v_2 + \int_{\pa\Omega}\frac{\pa v_1}{\pa \nu}v_2.
\end{equation}
In the case where the nodal domain $D$ does not contain a corner of $\Omega(\w)$, then the eigenfunction $u$ is smooth in $D$. Moreover, since $\Omega(\w)$ is planar, the boundary of $D$ is piecewise $C^1$ and meets at equal angles in the interior and on the boundary. Therefore, we have sufficient regularity to apply \eqref{eqn:Green1} with $v_1=v_2 = u$ and $\Omega = D$. Since $\Delta u = -\mu u$, and $u$ satisfies Neumann boundary conditions on $\pa\Omega$, together with Dirichlet boundary conditions on the rest of $\pa D$, this gives the equality in the statement of the lemma.

To handle the case where $D$ contains a vertex of $\Omega(\w)$, we need to use the regularity of the Neumann eigenfunction $u$ at the vertex. Let $\theta$ be the interior angle of a given vertex. Then, by the theorem in Section 1 of \cite{wigley1970}, $u$ satisfies these estimates in a neighborhood of the vertex:
\begin{enumerate}
    \item[i)] if $\theta <\pi$, then $u$ is $C^1$ in a neighborhood of the vertex;
    \item[ii)] if $\theta>\pi$, then $u$ is H\"older continuous, with exponent $\pi/\theta$, in a neighborhood of the vertex, and $\limsup_{r\to0}r^{1-\pi/\theta}|\nabla u|<\infty$, where $r$ is the distance to the vertex. 
\end{enumerate}
In particular, since $\theta<2\pi$, this ensures that $\nabla u \in L^4$.

We now define $D_k$ to be the set formed by intersecting $D$ with discs of radius $\eps_k>0$ centered at the corners of $\Omega(\w)$. We can choose the sequence $\eps_k$, with $\lim_{k\to\infty}\eps_k = 0$, so that the domains $D_k$ are $C^{1,1}$-curvilinear polygons. Moreover, $u$ is smooth in $D_k$, and so applying \eqref{eqn:Green1}, with $v_1=v_2=u$, we obtain
\begin{align*}
        -\int_{D_k}\mu|u|^2 = -\int_{D_k}|\nabla u|^2 + \int_{\pa D_k}\frac{\pa u}{\pa \nu}u.
\end{align*}
By the boundary conditions satisfied by $u$, the only contribution to the boundary integral is from arcs of the circles of radius $\eps_k$ centered at the corners of $\Omega(\w)$. By the regularity of $u$ and $\nabla u$, these contributions go to $0$ as $\eps_k$ tends to $0$. Therefore, taking the limit $\eps_k\to0$, we obtain the equality in the statement of the lemma, and this completes the proof.
\end{proof}
\begin{remark}
\label{rmk:lip}
As we are working in two dimensions, the nodal domain $D$ has a Lipschitz boundary, and meets the smooth part of the boundary of $\Omega(\w)$ at non-zero angles.  For smooth components of the boundary this is proven in Theorem $2.3$ in \cite{helffer2009nodal}, and at corners it is proven in Theorem $2.6$ in \cite{helffer2009nodal}.  We will also assume that $D\cap\{u_j\neq0\}$ has these same properties for $0\leq j \leq 4$, which can be achieved by replacing $\delta$ by a sequence $\delta_n\to\delta$ if necessary.
\end{remark}

\section{Estimates on bulk and boundary nodal domains} \label{sec:bulk}

In this section, we bound the number of bulk and boundary nodal domains. See Section \ref{sec:classification} for the classification and \eqref{e:dom number} for the notation.

\subsection{Number of bulk nodal domains}
We recall that for $\delta, \epsilon$  fixed and $u$ being a Neumann eigenfunction for the chain domain, a bulk nodal domain, $D\in \mathcal{V}^\delta_0(\epsilon, u)$,  is one for which \eqref{e:bulk nd} holds. As introduced in \eqref{e:dom number}, we continue to denote the corresponding number of bulk nodal domains by $ \nu^{\delta}_0(\eps;u)$.  Our main result is the following.

\begin{prop}[Number of bulk nodal domains] \label{prop:bulk}
Let $\Omega(\w)$ be a chain domain. There exists a constant $C_0^*>0$ such that for all $0<\eps<\tfrac{1}{2}$ and $0<\beta<\tfrac{1}{2}$ the following holds. If $u$ is a Neumann eigenfunction of $\Omega(\w)$ with eigenvalue $\mu$ and $|\Omega(\w)|\mu \geq (C_0^*)^{1/\beta}$, then
\begin{align}    \label{nubd:int0}
    \nu^{\delta}_0(\eps;u) \leq \frac{1}{\pi\lambda_1(\mathbb{D})}\left(\frac{1+\eps}{1-\eps}(|\Omega(\w)|\mu) + \frac{1+\tfrac{1}{\eps}}{1-\eps}C_0^*(|\Omega(\w)|\mu)^{2\beta} \right)
\end{align}
for $\delta=|\Omega(\w)|^{1/2-\beta}\mu^{-\beta}$.
\end{prop}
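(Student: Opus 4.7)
The plan is to run the standard Pleijel--Faber--Krahn scheme on the localized function $u_0 := \chi_0^\delta u$ on each bulk nodal domain $D$, and then sum over the $\nu_0^\delta(\eps;u)$ disjoint such domains. Set $D_0 := D \cap \{\chi_0^\delta > 0\}$. Since the partition of unity from Lemma \ref{lem:partition} guarantees that $\chi_0^\delta$ vanishes on a neighborhood of $\partial \Omega(\w)$ (the other cutoffs $\chi_j^\delta$ sum to $1$ there), the function $u_0$ extended by zero lies in $H_0^1(D_0)$, so the Faber--Krahn inequality gives $\Area(D_0) \geq \pi \lambda_1(\mathbb{D})/\lambda_1(D_0)$. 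It therefore suffices to produce a good upper bound on $\lambda_1(D_0)$ via the Rayleigh quotient $\|\nabla u_0\|_{L^2(D)}^2 / \|u_0\|_{L^2(D)}^2$.

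For the numerator, I would combine the pointwise Young inequality
\begin{equation*}
|\nabla(\chi_0^\delta u)|^2 \leq (1+\eps)(\chi_0^\delta)^2|\nabla u|^2 + (1+\tfrac{1}{\eps})u^2|\nabla \chi_0^\delta|^2
\end{equation*}
with the nodal-domain Green's identity $\int_D |\nabla u|^2 = \mu \int_D u^2$ from Lemma \ref{lem:Green}, together with the pointwise cutoff bound $|\nabla \chi_0^\delta| \leq C^*/\delta$ from Lemma \ref{lem:partition}(4). This yields $\|\nabla u_0\|_{L^2(D)}^2 \leq \bigl((1+\eps)\mu + (1+\tfrac{1}{\eps})(C^*)^2 \delta^{-2}\bigr)\|u\|_{L^2(D)}^2$. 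For the denominator, the defining bulk inequality \eqref{e:bulk nd} supplies $\|u_0\|_{L^2(D)}^2 \geq (1-\eps)\|u\|_{L^2(D)}^2$. Dividing and then applying Faber--Krahn gives
\begin{equation*}
\Area(D_0) \geq \frac{\pi\lambda_1(\mathbb{D})(1-\eps)}{(1+\eps)\mu + (1+\tfrac{1}{\eps})(C^*)^2\delta^{-2}}.
\end{equation*}
Summing over the pairwise disjoint sets $D_0 \subset \Omega(\w)$ produces the bound $\nu_0^\delta(\eps;u) \leq \frac{|\Omega(\w)|}{\pi\lambda_1(\mathbb{D})(1-\eps)}\bigl((1+\eps)\mu + (1+\tfrac{1}{\eps})(C^*)^2\delta^{-2}\bigr)$, and substituting $\delta = |\Omega(\w)|^{1/2-\beta}\mu^{-\beta}$ converts $|\Omega(\w)|/\delta^2$ into $(|\Omega(\w)|\mu)^{2\beta}$, giving \eqref{nubd:int0}.

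The hypothesis $|\Omega(\w)|\mu \geq (C_0^*)^{1/\beta}$ is used solely to guarantee that this choice of $\delta$ meets the admissibility condition \eqref{eqn:delta-partition}; the isoperimetric comparison $L(\w) \asymp |\Omega(\w)|^{1/2}$, controlled by $\rho^*$, converts the requirement $\delta \leq \min\{L(\w)\delta^*/20, L(\w)/(\kappa^*\tau^*)\}$ into a lower bound of the form $|\Omega(\w)|\mu \geq (C_0^*)^{1/\beta}$, for a constant $C_0^*$ depending only on $\rho^*,\delta^*,\kappa^*,\tau^*$. The only subtle step is the application of Lemma \ref{lem:Green} on $D$ when $D$ contains a vertex of $\Omega(\w)$; this is precisely the content of that lemma, so the rest of the argument is essentially mechanical once one trusts the partition and cutoff estimates from Lemma \ref{lem:partition}. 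I expect the substantive work in later sections to be the analogous estimates for boundary, corner, and neck nodal domains, where a direct Faber--Krahn bound is unavailable and where the nonstandard geometry of thin necks and corners must be confronted.
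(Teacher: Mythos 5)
Your proposal is correct and follows the same Pleijel--Faber--Krahn scheme that the paper uses (and that the paper attributes to \cite{gittins-lena2020} and \cite{lena-pleijel}): apply Faber--Krahn to the support of $u_0 = \chi_0^\delta u$ inside each bulk nodal domain, bound the Rayleigh quotient via Young's inequality combined with Lemma \ref{lem:Green} and the cutoff bound from Lemma \ref{lem:partition}(4), sum over disjoint nodal domains, and check that $\delta = |\Omega(\w)|^{1/2-\beta}\mu^{-\beta}$ satisfies \eqref{eqn:delta-partition} via the isoperimetric inequality. One tiny inaccuracy: the admissibility of $\delta$ only requires the universal isoperimetric lower bound $L(\w) \geq 2\pi^{1/2}|\Omega(\w)|^{1/2}$, so $\rho^*$ does not in fact enter into the threshold constant $C_0^*$ for this particular step.
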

\begin{proof}
To bound the number of bulk nodal domains of $u$, we follow exactly the argument to derive  \cite[equation (16)]{gittins-lena2020} and  \cite[equation (4)]{lena-pleijel}, using the properties of the $\delta$-partition from Lemma \ref{lem:partition}. This involves applying the Faber-Krahn inequality to each bulk domain, where we use $u_0=\chi_0^{\delta}u$ as a test function to obtain an upper bound on its first Dirichlet eigenvalue. This argument gives the estimate on the area of a bulk nodal domain, $D \in \mathcal{V}^\delta_0(\epsilon, u)$, of the form
\begin{equation}
    \label{FK:Dbd:int}
   \pi \lambda_1(\mathbb{D}) |D|^{-1} \leq \frac{1+\eps}{1-\epsilon} \,\frac{  \int_D | \nabla u|^2 dx}{ \int_D |u|^2 dx} + \frac{(1+\tfrac{1}{\eps})C_0^*}{(1-\eps)\delta^2}.
\end{equation}
 Using Lemma \ref{lem:Green}, this gives an upper bound on the number of such bulk nodal domains of 
\begin{equation}
    \label{nubd:int}
    \nu^{\delta}_0(\eps;u) \leq \frac{1}{\pi\lambda_1(\mathbb{D})} \left(  \frac{1+\epsilon}{1-\epsilon}  \mu + \frac{1 + \frac{1}{\epsilon}}{1 - \epsilon} C_0^* \delta^{-2} \right)|\Omega(\w)|. 
\end{equation}
The result would then hold once we show that the upper bound on $\delta$ from  \eqref{eqn:delta-partition} holds.
To see this, note that the isoperimetric inequality yields $|\Omega(\w)|^{-1/2}L(\w) \geq 2\pi^{1/2}$. Thus, setting 
$$c_0^*:=2\pi^{1/2} \min\left\{\tfrac{\delta_*}{20},\tfrac{1}{\kappa^*\tau^*} \right\},$$ 
we have 
\begin{align} \label{eqn:mu-partition}
\delta |\Omega(\w)|^{-1/2}= (|\Omega(\w)|\mu)^{-\beta}\leq {c_0^*} \leq |\Omega(\w)|^{-1/2}  L(\w)\min\left\{\tfrac{\delta_*}{20},\tfrac{1}{\kappa^*\tau^*} \right\},
\end{align}
as needed, provided $|\Omega(\w)|\mu\geq (c_0^*)^{-1/\beta}$.
\end{proof}

We will later choose $\eps>0$ small so that $\tfrac{1+\eps}{1-\eps}$ is sufficiently close to $1$, and $\beta=3/8$ so that the lower bound on $|\Omega(\w)|\mu$ can be written as simply $C_0^*$.

The first term in the estimate in Proposition \ref{prop:bulk} will be the leading order term in the count of the number of nodal domains. It is therefore important that the second term is sub-linear in $|\Omega(\w)|\mu$ for $0<\beta<\tfrac{1}{2}$.

\subsection{Number of boundary nodal domains}
We recall that for $\delta, \epsilon$  fixed and $u$ being a Neumann eigenfunction for the chain domain, a boundary nodal domain, $D\in \mathcal{V}^\delta_1(\epsilon, u)$,  is one for which \eqref{e:boundary nd} holds. As introduced in \eqref{e:dom number}, we  write $\nu^{\delta}_1(\eps;u)$ for the number of boundary nodal domains of $u$.

In order to bound  $\nu^{\delta}_1(\eps;u)$, we use the argument leading to  \cite[Equation  (18)]{gittins-lena2020} and  \cite[Equation (14)] {lena-pleijel}. The first step in the proof is the following result.

\begin{lem} \label{lem:boundary-straighten}
Let $\Omega(\w)$ be a chain domain. There exist constants $c^*$, $C^*$  such that the following holds. Let $\delta>0$ satisfy \eqref{eqn:delta-partition}, $\varepsilon>0$, and $u$ be a Neumann eigenfunction of $\Omega(\w)$ with eigenvalue $\mu$. For each  $D \in \mathcal{V}^\delta_1(\epsilon, u)$
 there are a set ${V}_D$, with a Lipschitz boundary, and a function ${v}\in H_0^1({V}_D)$ with the following properties:
\begin{align} \label{eqn:boundary-area}
  \Area({V}_D) \leq C^*\Area(\{x\in D:\, \dist(x,\pa\Omega(\w))<\tfrac{3}{4}\tau^*\delta\}),
\end{align}
and 
\begin{align*}
   \int_{{V}_D}{v}^2 \geq c^*\eps, \qquad \int_{{V}_D}\left|\nabla{v}\right|^2 \leq C^*(\mu+(\tau^*\delta)^{-2}).
\end{align*}
\end{lem}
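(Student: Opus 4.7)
The plan is to follow the classical Pleijel-type boundary reflection argument, adapted to our curvilinear, necked setting via the straightening diffeomorphisms from Lemma~\ref{lem:boundary1} and Remark~\ref{rem:neck-straighten}. For each maximal smooth arc of $\pa\Omega(\w)$ that lies inside $\Omega_1^{\delta}(\w)\cap\pa D$, I would straighten a collar neighborhood to a rectangle, reflect $u_1 = \chi_1^{\delta}u$ evenly across the straightened arc to obtain a function vanishing on the boundary of the doubled strip, and then transfer the $L^2$-mass hypothesis \eqref{e:boundary nd} and a Green's-identity energy bound through these transformations. Without loss of generality I would first normalize $\|u\|_{L^2(D)} = 1$, so that \eqref{e:boundary nd} reads $\|u_1\|_{L^2(D)}^2 \geq \varepsilon/4$.

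Concretely, first apply Lemma~\ref{lem:boundary1} with $\eta = \delta$---allowed by \eqref{eqn:delta-partition}---to each maximal smooth arc of $\pa\Omega(\w)\cap\pa D$ lying in $\Omega_1^{\delta}(\w)$, producing diffeomorphisms $F: I\times[0,\tfrac{3}{4}\tau^{*}\delta]\to\Omega(\w)$ with Jacobian bounded in $[\tfrac{1}{4},\tfrac{7}{4}]$; for arcs on the top or bottom of a neck with $w_{ij,k}>4\delta$, use the variant from Remark~\ref{rem:neck-straighten}. Let $V_D$ be the disjoint union of the reflected rectangles $I\times[-\tfrac{3}{4}\tau^{*}\delta,\tfrac{3}{4}\tau^{*}\delta]$, one per arc, and set $v(s,t) := (\chi_1^{\delta}u)\circ F(s,|t|)$ on each. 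Even reflection preserves $H^1$, and $v$ vanishes on $\pa V_D$ because $\chi_1^{\delta} \equiv 0$ at $|t| = \tfrac{3}{4}\tau^{*}\delta$ and in the sub-regions where $\Omega_2^{\delta/4}(\w)$ or $\Omega_4^{\delta/4}(\w)$ already covers, while $u = 0$ on $\pa D$ interior to $\Omega(\w)$; hence $v\in H_0^{1}(V_D)$. The area bound \eqref{eqn:boundary-area} is immediate from the upper Jacobian bound. The $L^2$-lower bound follows from reflection-doubling and the lower Jacobian bound:
\[
\int_{V_D} v^2 \;\geq\; \tfrac{8}{7}\,\|u_1\|_{L^2(D)}^2 \;\geq\; \tfrac{2}{7}\varepsilon,
\]
giving $c^{*} = 2/7$. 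Finally, the gradient bound uses
\[
|\nabla u_1|^2 \;\leq\; 2(\chi_1^{\delta})^{2}|\nabla u|^2 + 2|\nabla \chi_1^{\delta}|^{2}u^2,
\]
the estimate $|\nabla\chi_1^{\delta}| \leq C^{*}(\tau^{*}\delta)^{-1}$ (since the cutoff transitions on scale $\tau^{*}\delta$ in the normal direction in straightened coordinates), Green's identity from Lemma~\ref{lem:Green} giving $\int_D |\nabla u|^2 = \mu \int_D u^2 \leq \mu$, and bounded Jacobians of $F$ to conclude $\int_{V_D}|\nabla v|^2 \leq C^{*}(\mu + (\tau^{*}\delta)^{-2})$.

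The main obstacle is essentially bookkeeping: verifying that $v\in H_0^{1}(V_D)$ and that the reflected strips genuinely assemble into a Lipschitz domain. This requires carefully exploiting the nested-support structure of the partition of unity from Lemma~\ref{lem:partition} so that $\chi_1^{\delta}$ vanishes before any individual straightened strip reaches a vertex of $\D_\ell$, a neck--domain interface contained in $\Omega_4^{\delta}(\w)$, or a neighboring smooth arc---allowing the reflected strips to be taken disjoint and ensuring that the trace of $v$ on $\pa V_D$ vanishes everywhere. The upper bound $\delta \leq \tfrac{1}{20}L(\w)\delta^{*}$ from \eqref{eqn:delta-partition} is precisely what guarantees the separation between distinct smooth arcs needed for this disjointness, while the $L(\w)/(\kappa^{*}\tau^{*})$ bound is what validates the application of Lemma~\ref{lem:boundary1} with $\eta = \delta$.
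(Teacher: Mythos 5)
Your overall strategy---straighten a collar of $\pa\Omega(\w)$ via Lemma~\ref{lem:boundary1} (and the neck variant in Remark~\ref{rem:neck-straighten}), reflect $u_1 = \chi_1^{\delta}u$ evenly across the straightened boundary, and carry the $L^2$-mass lower bound and the Green's-identity energy bound through the diffeomorphism and reflection---is exactly the Gittins--L\'ena argument the paper points to, and your constants bookkeeping for the $L^2$ and gradient bounds is correct once $V_D$ is defined properly. But there is a genuine gap in your definition of $V_D$: you set $V_D$ to be the full doubled rectangle $I\times[-\tfrac{3}{4}\tau^{*}\delta,\tfrac{3}{4}\tau^{*}\delta]$ (one per arc), and then claim the area bound \eqref{eqn:boundary-area} ``is immediate from the upper Jacobian bound.'' It is not. $\Area(V_D)$ with your definition is of order $|I|\cdot\tau^*\delta$ (i.e.\ arc length times collar thickness), whereas the right-hand side of \eqref{eqn:boundary-area} is $C^*\Area(\{x\in D:\dist(x,\pa\Omega(\w))<\tfrac{3}{4}\tau^*\delta\})$, which can be arbitrarily small compared to $|I|\tau^*\delta$ when $D$ is a thin sliver hugging the boundary. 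Such slivers are perfectly admissible boundary nodal domains, so \eqref{eqn:boundary-area} fails. This is not a cosmetic slip: \eqref{eqn:boundary-area} is precisely what Proposition~\ref{prop:boundary} uses to convert the Faber--Krahn lower bound on $\Area(V_D)$ into a lower bound on $\Area(\{x\in D:\dist(x,\pa\Omega(\w))<\tfrac{3}{4}\tau^*\delta\})$, and with your $V_D$ that transfer does not happen.

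The fix is to take $V_D$ to be the even reflection across $\{t=0\}$ of $F^{-1}(D\cap\{u_1\neq 0\})$ (the straightened image of the part of $D$ where $u_1$ is nonvanishing), not the full rectangle, and restrict $v$ to $V_D$. Then \eqref{eqn:boundary-area} follows directly from the Jacobian bound and the factor of $2$ from reflection. You need $V_D$ to have Lipschitz boundary, which is exactly what Remark~\ref{rmk:lip} supplies for $D\cap\{u_1\neq0\}$ (your proposal never invokes this, but it is the essential input); and $v$ still vanishes on $\pa V_D$ because $\pa V_D$ consists of points where $u=0$ (the nodal set and $\pa D$), points where $\chi_1^{\delta}=0$, and pieces of $\{t=\pm\tfrac{3}{4}\tau^*\delta\}$, while the reflection axis $\{t=0\}$ is either interior to $V_D$ or not part of it. Note also that if you keep $V_D$ as the whole rectangle but define $v$ on the entire rectangle (without restricting to $D$), your gradient estimate would no longer be justified: Lemma~\ref{lem:Green} controls $\int_D|\nabla u|^2$, not $\int_{\text{collar}}|\nabla u|^2$, and the latter can be much larger after normalizing $\|u\|_{L^2(D)}=1$. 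So restricting to the image of $D$ is needed for the energy bound as well as the area bound. With these corrections your argument is complete and matches the paper's intended route.
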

\begin{proof}
Lemma \ref{lem:boundary-straighten} follows from the straightening results of Lemma \ref{lem:boundary1} (with $\eta = \delta$) and Remark \ref{rem:neck-straighten}, using an identical argument to the proof of the estimates (13), (15), and (17) in Section 5 of \cite{gittins-lena2020}, and so we omit the details of the proof here.  In particular, this proof relies upon the Lipschitz properties of $D \cap \{u_1\neq0\}$ from Remark \ref{rmk:lip}.
\end{proof}

When bounding $\nu_1^\delta(\epsilon, u)$, we will use Lemma \ref{lem:boundary-straighten} along with Faber-Krahn's inequality to obtain that $\nu_1^\delta(\epsilon, u)$ is bounded by a multiple of $M_{\Omega(\w)}(\tfrac{3}{4}\tau^*\delta)$, where $M_{\Omega(\w)}(t)$ is as defined in \eqref{e:M}. To deal with this new upper bound, we will use the following lemma.

\begin{lem} \label{lem:M-bound}
Let $\Omega(\w)$ be a chain domain. There exists $C^*>0$ such that 
\begin{align*}
    M_{\Omega(\w)}(t) \leq C^*L(\w)t,
\end{align*}
for $t \leq \tfrac{3}{4}L(\w)\min\left\{\tau^*\delta^*,\tfrac{1}{\kappa^*}\right\}$.
\end{lem}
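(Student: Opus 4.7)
The plan is to cover $\{x\in\Omega(\w):\dist(x,\pa\Omega(\w))<t\}$ by three types of regions and bound each contribution separately. The three types are: (A) the $t$-tubular neighborhoods of the smooth sides of the $\D_\ell$ lying on $\pa\Omega(\w)$, after removing an $\eta$-neighborhood of every vertex; (B) the set of points in each neck $\Q_{ij,k}(\w)$ within distance $t$ of its top or bottom boundary arc; and (C) Euclidean discs of radius $\eta+t$ around each vertex of $\pa\Omega(\w)$. I would take $\eta=4t/(3\tau^*)$; the hypothesis $t\leq\tfrac{3}{4}L(\w)\min\{\tau^*\delta^*,1/\kappa^*\}$ is exactly what is needed so that $\eta\leq L(\w)\min\{\delta^*,1/(\kappa^*\tau^*)\}$, which is the requirement of Lemma \ref{lem:boundary1}. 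That the three families form a cover follows by projecting any $x$ in the $t$-neighborhood to its nearest point $y\in\pa\Omega(\w)$ and distinguishing whether $y$ lies on some $b_\eta$, on the top or bottom of some neck, or within $\eta$ along the boundary of a vertex (in the last case $|x-v|<t+\eta$).

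For (A), I apply Lemma \ref{lem:boundary1} to each smooth side of each $\D_\ell$; with the chosen $\eta$ the straightening diffeomorphism $F$ covers the full $t$-tubular neighborhood of $b_\eta$ (since $\tfrac{3}{4}\tau^*\eta=t$) with Jacobian at most $7/4$, yielding an area bound of $\tfrac{7}{4}|b_\eta|t$. Summing over all smooth sides contributing to $\pa\Omega(\w)$ gives at most $\tfrac{7}{4}L(\w)t$.

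For (B), I use the neck straightening $F_{ij,k}$ from Remark \ref{rem:neck-straighten}, whose Jacobian is bounded above and below by constants depending only on $w^*$. The Euclidean $t$-neighborhood of the top and bottom boundaries pulls back under $F_{ij,k}$ to a set in the rectangle $[0,L_{ij,k}]\times(-w_{ij,k},w_{ij,k})$ that, when $w_{ij,k}$ exceeds a fixed multiple of $t$, consists of two strips of straightened width $\lesssim t$, and otherwise is contained in the whole rectangle of area $2L_{ij,k}w_{ij,k}$. In either case the contribution from a single neck is at most $C^*L_{ij,k}t$. Using $w^*\leq|\pa_s\G_{ij,k}|$ from the neck-width constant, the product $w^*L_{ij,k}$ is bounded by the length of the top (or bottom) boundary arc of $\Q_{ij,k}(\w)$; summing over all necks yields at most $C^*L(\w)t$.

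For (C), the number of vertices of $\pa\Omega(\w)$ is at most $1/\delta^*$, and the hypothesis on $t$ gives $t^2/\delta^*\leq\tfrac{3}{4}\tau^*L(\w)t$, so the total disc area is bounded by $C^*L(\w)t$. Summing the three contributions yields the desired inequality. The main technical point is the narrow-neck regime $w_{ij,k}\lesssim t$ in (B): there the $t$-neighborhoods of the top and bottom boundaries overlap and can swallow the entire neck, so the naive tubular estimate would fail, but the straightened rectangle area $2L_{ij,k}w_{ij,k}$ is already $\lesssim L_{ij,k}t$ in this regime, which is what allows the bound to be uniform in $\w$.
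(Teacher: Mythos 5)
Your proposal is correct and matches the paper's proof in all essentials: the same three-way split (neck contribution via the straightening $F_{ij,k}$ of Remark \ref{rem:neck-straighten}, smooth-side contribution via Lemma \ref{lem:boundary1} with $\eta=\tfrac{4}{3}(\tau^*)^{-1}t$ so that $\tfrac{3}{4}\tau^*\eta=t$ and the Jacobian bound $7/4$ applies, and vertex contribution via the $1/\delta^*$ vertex count and the hypothesis on $t$ to convert $t^2$ into $L(\w)t$). You go slightly beyond the paper in two places: you spell out the covering argument (the paper just announces the three parts without verifying they exhaust the $t$-neighborhood), and you explicitly separate the $w_{ij,k}\gtrsim t$ and $w_{ij,k}\lesssim t$ regimes in (B), observing that in the latter the whole straightened rectangle already has area $\lesssim L_{ij,k}t$ — the paper subsumes this into the unexplained constant $C^*$ in front of $\sum\text{Length}(\pa\Q_{ij,k}(\w))t$. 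Your use of $w^*\leq|\pa_s\G_{ij,k}|$ to convert $\sum L_{ij,k}$ into a bound by $L(\w)/w^*$ is also a cleaner way to reach $C^*L(\w)t$ than the paper's terse $\sum\text{Length}(\pa\Q_{ij,k}(\w))\leq L(\w)$, which as written ignores the end arcs of the necks.
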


\begin{proof}
To bound $M_{\Omega(\w)}(t)$, we will break the function into three parts:
\begin{enumerate}

\item[1)] the contribution from the part of the necks $\{ \Q_{ij,k}(\w)\}$ that are a distance of at least $t$ from the boundary of the necks,
\item[2)] the contribution from the parts of $\{\pa \D_\ell\}$ with an appropriate neighborhood of each vertex excluded, 
\item[3)] the contribution from neighborhoods of the vertices of $\Omega(\w)$.
\end{enumerate}
We will denote the contribution to $M_{\Omega(\w)}(t)$ from each of these parts of $\pa\Omega(\w)$ by $M^{(j)}_{\Omega(\w)}(t)$, so that
\begin{align*}
    M_{\Omega(\w)}(t) = \sum_{j=1}^{3}M^{(j)}_{\Omega(\w)}(t).
\end{align*}

To handle 1), we use the diffeomorphisms $F_{ij,k}$ from Remark \ref{rem:neck-straighten} to straighten the upper and lower boundaries of each neck $\Q_{ij,k}(\w)$.  Then, there exists a constant $C^*$, depending only on the neck-width constant $w^*$ such that
\begin{align} \label{eqn:M-bound1}
M^{(1)}_{\Omega(\w)}(t) \leq C^*\sum_{1\leq i \leq j\leq M}\sum_{k=1}^{K}\text{Length}(\pa  \Q_{ij,k}(\w))t \leq C^*L(\w)t.
\end{align} 

For 2), we exclude an $\eta$ neighborhood of each vertex of $\D_\ell$, with $\eta$ given by $\eta = \tfrac{4}{3}(\tau^*)^{-1}t$. The upper bound on $t$ from the statement of the lemma then ensures that $\eta \leq L(\w)\min\left\{\delta^*,\tfrac{1}{\kappa^*\tau^*}\right\}$. Therefore, we can apply Lemma \ref{lem:boundary1} with this value of $\eta$ to obtain a diffeomorphism that straightens this part of the boundary of $\D_{\ell}$. Since this change of variables has a Jacobian bounded by $\tfrac{7}{4}$, we obtain
\begin{align} \label{eqn:M-bound2}
M^{(2)}_{\Omega(\w)}(t) \leq \tfrac{7}{4}\sum_{\ell=1}^{M}\text{Length}(\pa \D_\ell)t \leq \tfrac{7}{4}L(\w)t.
\end{align}
Finally, since, by the definition of the vertex control constant, $\Omega(\w)$ has at most $1/\delta^*$ vertices, the remaining contribution to \[\{x\in\Omega(\w):\dist(x,\pa\Omega(\w))<t\}\] is contained within $1/\delta^*$ discs of radius $2\eta$. Therefore,
\begin{align} \label{eqn:M-bound3}
M^{(3)}_{\Omega(\w)}(t) \leq 4\pi(\delta^*)^{-1}\eta^2 = \tfrac{64}{9}\pi (\delta^*)^{-1}(\tau^*)^{-2}t^2.
\end{align}
Combining \eqref{eqn:M-bound1}, \eqref{eqn:M-bound2}, and \eqref{eqn:M-bound3}, using the upper bound on $t$ gives the desired estimate on $M_{\Omega(\w)}(t)$. 
\end{proof}

Our main result in this section, which  bounds the number of boundary nodal domains  for an eigenfunction $u$, is the following.

\begin{prop}[Number of boundary nodal domains] \label{prop:boundary}
Let $\Omega(\w)$ be a chain domain. There exists a constant $C_1^*>0$ such that for all $0<\eps<\tfrac{1}{2}$ and $0<\beta<\tfrac{1}{2}$ the following holds. If $u$ is a Neumann eigenfunction of $\Omega(\w)$ with eigenvalue $\mu$ and $|\Omega(\w)|\mu \geq (C_1^*)^{1/\beta}$, then 
\begin{align*}
     \nu_1^\delta(\eps;u) \leq C_1^*\eps^{-1}\left(|\Omega(\w)|\mu\right)^{1-\beta}
\end{align*}
for $\delta=|\Omega(\w)|^{1/2-\beta}\mu^{-\beta}$.
\end{prop}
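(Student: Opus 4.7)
The plan is to adapt the classical Faber–Krahn argument for boundary nodal domains, using Lemma \ref{lem:boundary-straighten} to produce, for each $D\in\mathcal{V}^{\delta}_1(\eps;u)$, a suitable auxiliary set and test function, and then to sum the resulting area lower bounds against the volume of the $\tfrac{3}{4}\tau^*\delta$-tube around $\pa\Omega(\w)$, which is controlled by Lemma \ref{lem:M-bound}.

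First, for each $D\in\mathcal{V}^{\delta}_1(\eps;u)$, apply Lemma \ref{lem:boundary-straighten} to obtain $V_D$ with Lipschitz boundary and $v\in H^1_0(V_D)$. Since $v\in H^1_0(V_D)$, the Rayleigh quotient bounds the first Dirichlet eigenvalue of $V_D$ from above by $C^*(\mu+(\tau^*\delta)^{-2})/(c^*\eps)$, and Faber–Krahn then yields
\begin{equation*}
\Area(V_D)\;\geq\;\frac{\pi\lambda_1(\mathbb{D})\,c^*\eps}{C^*(\mu+(\tau^*\delta)^{-2})}.
\end{equation*}
Combining this with \eqref{eqn:boundary-area} gives a lower bound of the same form for the area of the tubular slice $\{x\in D:\dist(x,\pa\Omega(\w))<\tfrac{3}{4}\tau^*\delta\}$. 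Since distinct nodal domains are disjoint, summing over $D\in\mathcal{V}^{\delta}_1(\eps;u)$ yields
\begin{equation*}
\nu_1^{\delta}(\eps;u)\cdot\frac{c^*\eps}{C^*(\mu+(\tau^*\delta)^{-2})}\;\leq\;M_{\Omega(\w)}\bigl(\tfrac{3}{4}\tau^*\delta\bigr),
\end{equation*}
after absorbing constants.

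Next, I would invoke Lemma \ref{lem:M-bound} with $t=\tfrac{3}{4}\tau^*\delta$ to get $M_{\Omega(\w)}(\tfrac{3}{4}\tau^*\delta)\leq C^*L(\w)\delta$, and use the isoperimetric ratio bound $L(\w)\leq\sqrt{\rho^*|\Omega(\w)|}$ from point (1) of Section \ref{sec:unif} to conclude $M_{\Omega(\w)}(\tfrac{3}{4}\tau^*\delta)\leq C^*|\Omega(\w)|^{1/2}\delta$. Substituting $\delta=|\Omega(\w)|^{1/2-\beta}\mu^{-\beta}$ gives $\delta^{-2}=|\Omega(\w)|^{-1+2\beta}\mu^{2\beta}$, so that $\mu/\delta^{-2}=(|\Omega(\w)|\mu)^{1-2\beta}$. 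Since $\beta<\tfrac12$, choosing $C_1^*$ large enough that $|\Omega(\w)|\mu\geq(C_1^*)^{1/\beta}$ forces $\delta^{-2}\leq\mu$, whence $\mu+(\tau^*\delta)^{-2}\leq C^*\mu$. Plugging this into the display above produces
\begin{equation*}
\nu_1^{\delta}(\eps;u)\;\leq\;\frac{C^*}{\eps}\,\mu\cdot|\Omega(\w)|^{1/2}\delta\;=\;\frac{C^*}{\eps}\,(|\Omega(\w)|\mu)^{1-\beta},
\end{equation*}
which is the claimed bound.

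The routine checks that must not be skipped are verifying that the chosen $\delta$ satisfies both \eqref{eqn:delta-partition} and the hypothesis $t\leq\tfrac34 L(\w)\min\{\tau^*\delta^*,1/\kappa^*\}$ of Lemma \ref{lem:M-bound}; both follow from the isoperimetric inequality $L(\w)\geq 2\pi^{1/2}|\Omega(\w)|^{1/2}$ together with a large enough choice of $C_1^*$, exactly as in the derivation of \eqref{eqn:mu-partition}. There is no real obstacle here beyond bookkeeping: the geometric heavy lifting (boundary straightening, the localization estimates for $v$, and the linear-in-$t$ bound on $M_{\Omega(\w)}$) has already been done in Lemmas \ref{lem:boundary-straighten} and \ref{lem:M-bound}, and the present proposition is the assembly step in which all five geometric constants enter solely through the absorbed constant $C_1^*$.
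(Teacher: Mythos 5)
Your proof is correct and follows essentially the same route as the paper's: Lemma \ref{lem:boundary-straighten} plus Faber--Krahn gives the per-domain area lower bound, disjointness of nodal domains converts this to a bound against $M_{\Omega(\w)}(\tfrac{3}{4}\tau^*\delta)$, and Lemma \ref{lem:M-bound} with the isoperimetric ratio bound finishes. The only cosmetic difference is that you absorb the $(\tau^*\delta)^{-2}$ term into $C^*\mu$ immediately (valid since $(|\Omega(\w)|\mu)^{1-2\beta}\geq 1$), whereas the paper keeps both terms and observes the $(|\Omega(\w)|\mu)^{\beta}$ contribution is dominated by $(|\Omega(\w)|\mu)^{1-\beta}$; both are fine.
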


\begin{proof}
Let $D \in \mathcal{V}^\delta_1(\epsilon, u)$ and let $ V_{_{\!D}} \subset \Omega(\w)$ and $ v \in H_0^1( V_{_{\!D}})$  be as in  Lemma \ref{lem:boundary-straighten}. Then,  
\begin{equation}\label{e:V}
  \Area(V_{_{\!D}}) \leq C^*\Area(\{x\in D:\, \dist(x,\pa\Omega(\w))<\tfrac{3}{4}\tau^*\delta\}).
\end{equation}
On the other hand, writing $\lambda_1(\mathbb{D}_{V_{_{\!D}}})$ for the first Dirichlet eigenvalue of the disc of area equal to that of $V_{_{\!D}}$, the Faber-Krahn inequality yields
\begin{equation}\label{e:FK}
\lambda_1(\mathbb{D}_{V_{_{\!D}}}) \leq \lambda_1(V_{_{\!D}}) \leq  \frac{\int_{ V_{_{\!D}} }\left|\nabla {v}\right|^2 }{\int_{V_{_{\!D}} }{v^2}} \leq  C^*\eps^{-1}\left(\mu +(\tau^*\delta)^{-2}\right).
\end{equation}
Since $\lambda_1(\mathbb{D}_{V_{_{\!D}}})$ equals $\Area(V_{_{\!D}})^{-1}$ times a dimensional constant, we conclude from \eqref{e:V} that there is $C^*$ such that, for every $D \in \mathcal{V}^\delta_1(\epsilon, u)$,
$$
\eps\left(\mu +(\tau^*\delta)^{-2}\right)^{-1}\leq C^*\Area(\{x\in D:\, \dist(x,\pa\Omega(\w))<\tfrac{3}{4}\tau^*\delta\}).
$$
We then conclude
\begin{align} \label{eqn:boundary-area2}
\nu_1^\delta(\eps;u) \leq C^*\eps^{-1}\left(\mu + (\tau^*\delta)^{-2}\right)M_{\Omega(\w)}(\tfrac{3}{4}\tau^*\delta).
\end{align}

As explained in \eqref{eqn:mu-partition}, $\delta = |\Omega(\w)|^{1/2-\beta}\mu^{-\beta}$ satisfies \eqref{eqn:delta-partition} for $(|\Omega(\w)|\mu)^{1/\beta}$ sufficiently large, and so we can apply Lemma \ref{lem:M-bound} with $t = \tfrac{3}{4}\tau^*\delta$ to obtain
\begin{align*}
    M_{\Omega(\w)}
(\tfrac{3}{4}\tau^*\delta)\leq \tfrac{3}{4}C^*\tau^*L(\w)\delta &=  \tfrac{3}{4}C^*\tau^*L(\w)|\Omega(\w)|^{-1/2}|\Omega(\w)|(|\Omega(\w)|\mu)^{-\beta} \\
& \leq \tfrac{3}{4}C^*\tau^*(\rho^*)^{1/2}|\Omega(\w)|(|\Omega(\w)|\mu)^{-\beta}.
\end{align*} 
Here $\rho^*$ is the isoperimetric ratio constant.  In addition,
$$
    \mu + (\tau^*\delta)^{-2} = |\Omega(\w)|^{-1}\left(|\Omega(\w)|\mu+(\tau^*)^{-2}(|\Omega(\w)|\mu)^{2\beta}\right), 
$$ and so 
the result follows from \eqref{eqn:boundary-area2}.
\end{proof}

\section{Estimates on corner nodal domains} \label{sec:corner}
In this section, we will obtain an upper bound on the number of corner nodal domains.
Let $\Omega(\w)$ be a chain domain,  $\delta, \epsilon>0$, and  $u$ be a Neumann eigenfunction for $\Omega(\w)$. We recall that a corner nodal domain, $D\in \mathcal{V}^\delta_2(\epsilon, u)$,  is one for which \eqref{e:corner nd} holds. As introduced in \eqref{e:dom number}, we  write $\nu^{\delta}_2(\eps;u)$ for the number of corner nodal domains of $u$.

Given a vertex $v_0$ of $\Omega(\w)$ with interior angle $\theta_0$, we write 
$$
S_{v_0,\theta_0}(\delta)=\{(r,\theta) \in \Omega(\w): \; r<\delta, \,|\theta|<\theta_0/2\},
$$
where we use polar coordinates centered at $v_0$.  The first step in controlling $\nu^{\delta}_2(\eps;u)$ is the following analogue of Lemma \ref{lem:boundary-straighten}.

\begin{lem} \label{lem:corner-straighten}
Let $\Omega(\w)$ be a chain domain. There exist constants $c^*$, $C^*$  such that the following holds. Let $\delta>0$ satisfy  \eqref{eqn:delta-partition}, $\varepsilon>0$, and $u$ be a Neumann eigenfunction of $\Omega(\w)$ with eigenvalue $\mu$. Let  $D \in \mathcal{V}^\delta_2(\epsilon, u)$  such that $$\|u_2\|_{L^2(D)}^2 \geq \tfrac{1}{4}\eps\|u\|^2_{L^2(D)}.$$
 Then, there are a vertex $v_0$ of $\Omega(\w)$ with interior angle $\theta_0$, a set $V \subset S_{v_0,\theta_0}(\delta)$ with a Lipschitz boundary and a function $v\in H^1(V)$, with $v\equiv 0$ on $\pa V \cap S_{v_0,\theta_0}(\delta)$, and  the following properties:
\begin{align} \label{eqn:corner0a}
    \Area(V) \leq C^*\Area(\{x\in D:\chi_2^{\delta}u\neq0,\, \dist(x,v_0)<\delta\}),
\end{align}
 and
\begin{align} \label{eqn:corner0b}
   c^*\eps \leq \int_{V}v^2 \leq C^*, \qquad \qquad \int_{V}\left|\nabla v\right|^2 \leq C^*\left(\mu + \delta^{-2}\right).
\end{align}
\end{lem}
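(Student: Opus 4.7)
The plan is to combine a pigeonhole argument localizing the $L^2(D)$-mass of $u_2$ to a single vertex with a straightening-by-diffeomorphism construction near that vertex, in the spirit of Lemma \ref{lem:boundary1}. Since the support of $\chi_2^{\delta}$ is contained in a disjoint union of neighborhoods of the vertices of $\Omega(\w)$ and, by point (3) of Section \ref{sec:unif}, the number of such vertices is bounded by a constant depending only on $\delta^*$, the hypothesis $\|u_2\|_{L^2(D)}^2\ge \tfrac{1}{4}\eps\|u\|_{L^2(D)}^2$ together with the pigeonhole principle yields a single vertex $v_0$ of $\Omega(\w)$, with interior angle $\theta_0$, such that $\|u_2\|_{L^2(D\cap B(v_0,C^*\delta))}^2\ge c^*\eps\|u\|_{L^2(D)}^2$.

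Next, I would use the vertex control constant to straighten a neighborhood of $v_0$ onto the sector $S_{v_0,\theta_0}(\delta)$. After a rotation, the two boundary arcs of $\Omega(\w)$ meeting at $v_0$ are, by the definition of $\delta^*$, graphs over the $x_1$-axis with slope bounded by $1/\delta^*$, with tangent lines at $v_0$ along $\theta=\tfrac{\pi}{2}\pm\tfrac{\theta_0}{2}$. This is enough to construct, in polar coordinates, a bi-Lipschitz diffeomorphism $\Phi:S_{v_0,\theta_0}(\delta)\to\Phi(S_{v_0,\theta_0}(\delta))\subset\Omega(\w)\cap B(v_0,C^*\delta)$ sending the two straight sides of the sector onto the two boundary arcs, with Jacobian bounded above and below by constants depending only on $\delta^*$. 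The upper bound \eqref{eqn:delta-partition} on $\delta$ guarantees that this construction is well-defined throughout the relevant neighborhood.

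Setting $V=\Phi^{-1}(D\cap\{\chi_2^{\delta}u\ne 0\}\cap\Phi(S_{v_0,\theta_0}(\delta)))$ and $v=(u_2\circ\Phi)/\|u\|_{L^2(D)}$, the Lipschitz regularity of $\partial V$ follows from Remark \ref{rmk:lip} and the regularity of $\Phi$. The vanishing of $v$ on $\partial V\cap S_{v_0,\theta_0}(\delta)$ is automatic: the interior boundary pieces come either from the nodal set of $u$, where $u=0$, or from the boundary of $\mathrm{supp}(\chi_2^{\delta})$, where $\chi_2^{\delta}=0$. The area bound \eqref{eqn:corner0a} follows from the Jacobian control on $\Phi$, the lower bound in \eqref{eqn:corner0b} from pulling back the pigeonhole estimate through $\Phi$, and the upper bound from $\|u_2\|_{L^2(D)}\le\|u\|_{L^2(D)}$. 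For the gradient estimate, I expand $|\nabla u_2|^2\le 2(\chi_2^{\delta})^2|\nabla u|^2+2u^2|\nabla\chi_2^{\delta}|^2$, integrate over $D$, and apply Lemma \ref{lem:Green} together with the bound $|\nabla\chi_2^{\delta}|\le C^*\delta^{-1}$ from Lemma \ref{lem:partition} to obtain $\int_V|\nabla v|^2\le C^*(\mu+\delta^{-2})$.

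The main obstacle is the straightening step: the vertex $v_0$ may be either a vertex of some $\D_\ell$ or, in the regime $w_{ij,k}>4\delta$, a junction where a neck $\Q_{ij,k}(\w)$ meets $\D_i$ or $\D_j$. In the latter case the boundary arcs depend on $\w$, and the straightening must yield Jacobian bounds independent of $\w$. This is precisely the content of the final clause of the vertex control constant definition in Section \ref{sec:unif}, which ensures that the same straightening construction applies uniformly at both types of vertices.
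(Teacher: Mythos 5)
Your proposal follows essentially the same approach as the paper: pigeonhole over the (boundedly many) vertices to isolate one vertex $v_0$ carrying a fraction $\eps\delta^*$ of the $L^2(D)$-mass of $u_2$, straighten the two boundary arcs at $v_0$ via a diffeomorphism onto the sector $S_{v_0,\theta_0}(\delta)$, pull back $u_2$, and verify the Dirichlet condition on $\pa V\cap S_{v_0,\theta_0}(\delta)$, the area comparison, and the $H^1$ estimates using Lemmas \ref{lem:partition} and \ref{lem:Green}. You also (correctly) normalize $v$ by $\|u\|_{L^2(D)}$ to make $\int_V v^2\le C^*$ literally true, a point the paper leaves implicit.

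One imprecision worth flagging: you assert the straightening is bi-Lipschitz with Jacobian bounds depending only on $\delta^*$, crediting the slope bound $|f'_\pm|\le 1/\delta^*$ from the vertex control constant. That is not quite enough. The paper's explicit map $(s,t)\mapsto(s,\tfrac{f_\pm(s)}{a_\pm s}t)$ has an off-diagonal Jacobian entry $t\,\tfrac{sf'_\pm(s)-f_\pm(s)}{a_\pm s^2}$, and the numerator vanishes to first order at $s=0$ only because $f''_\pm$ is controlled; its boundedness requires the normalized curvature constant via $|f''_\pm|\le C^*\kappa^*/L(\w)$, combined with $|t|\le\delta\le L(\w)/(\kappa^*\tau^*)$ from \eqref{eqn:delta-partition}. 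So the constants $c^*,C^*$ here depend on $\kappa^*$ (and the ratio built into the $\delta$-constraint), not on $\delta^*$ alone. You do invoke \eqref{eqn:delta-partition} for ``well-definedness,'' but the role of the curvature bound in the Jacobian estimate itself should be made explicit, as it is the point where a purely Lipschitz hypothesis on the boundary would fail.
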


\begin{proof} By the definition of the vertex control constant $\delta^*$, the number of vertices of $\Omega(\w)$ is bounded by $1/\delta^*$. Therefore, since $\|u_2\|_{L^2(D)}^2 \geq \tfrac{1}{4}\eps\|u\|^2_{L^2(D)}$, there exists a vertex $v_0$, opening angle $\theta_0$, such that
\begin{align*}
    \int_{D\cap\{\dist(x,v_0)<\delta\}} u_2^2  \geq \tfrac{1}{4}\eps\delta^* \int_{D}u^2.
\end{align*}
We now apply a transformation to $u_2$ to translate and straighten the sides of $\Omega(\w)$ meeting at $v_0$. This will give us a function $v$ defined on $S_{v_0,\theta_0}(\delta)$ which is smooth away from the origin, and vanishes on the portion of the boundary of its support in the interior of the sector (i.e., $v\equiv 0$ on $\pa V \cap S_{v_0,\theta_0}(\delta)$). The transformation is constructed as follows:

By the upper bound on $\delta$ from \eqref{eqn:delta-partition}, and using the definition of the vertex control constant $\delta^*$, we can rotate and translate so that, without loss of generality, the vertex $v_0$ is at the origin, and in a disc of radius $20\delta$ centered at the vertex, this part of the boundary of $\pa \Omega(\w)$ can be written as $y = f_{\pm}(x)$ for $\pm x>0$. Moreover, there exists a constant $\theta^*$, depending only on $\delta^*$, such that the opening angle $\theta_0$ of each vertex satisfies  $\theta^*<\theta_0<2\pi-\theta^*$.  Therefore, the rotation can be chosen to ensure that $\lim_{x\to0^{\pm}}f_{\pm}(x)/x = a_{\pm}$, with
\begin{align*}
    c^* \leq |a_{\pm}| \leq C^*, \quad c^* \leq |f_{\pm}(x)/x| \leq C^*, \quad |f'_{\pm}(x)| \leq C^*,
\end{align*}
for constants $c^*$ and $C^*$ only depending on $\delta^*$.

Next, there exists a subset $S \subset S_{v_0,\theta_0}(\delta)$ such that the function
\begin{align*}
    F:  S  \to \Omega(\w), \qquad 
    (s,t)  \mapsto \left(s,\frac{f_{\pm}(s)}{a_{\pm}s}t\right),
\end{align*}
is a diffeomorphism onto its image, with the image containing the part of $\Omega(\w)$ in the disc of radius $\delta$ centered at $v_0$.

By Lemmas \ref{lem:partition} and \ref{lem:Green}, the function $u_2$ satisfies
\begin{align*}
    \int_{D}|\nabla u_2|^2 = \int_{D}\left|\nabla\left(\chi_2^\delta u\right)\right|^2 \leq C^*(\mu + \delta^{-2}).
\end{align*}
Therefore, we set $v = u_2\circ F$, and $V = F^{-1}(D\cap\{\dist(x,v_0)<\delta,u_2\neq0\})$, which by Remark \ref{rmk:lip} we may take to be Lipschitz. To complete the proof of the lemma, it is sufficient to show that the Jacobian of this change of variables is bounded from above and below, and that the off-diagonal entries are bounded. This Jacobian is given by the determinant of 
\begin{align*}
     \begin{pmatrix}
    1 & t\displaystyle{\frac{sf'_{\pm}(s)-f_{\pm}(s)}{a_{\pm} s^2}}\\
   0 & \displaystyle{\frac{f_{\pm}(s)}{a_{\pm}s}}.
    \end{pmatrix},
\end{align*}
and so by the properties of $f_{\pm}(s)/s$, we just need to bound the off-diagonal term. For some $|\zeta|$, $|\xi|\in(0,s)$ we have $f_{\pm}(s) = a_{\pm}s + \tfrac{1}{2}s^2f_{\pm}''(\zeta)$, and $f_{\pm}'(s) = a_{\pm} + sf_{\pm}''(\xi)$.  Therefore, it is sufficient to show that
\begin{align} \label{eqn:corner-straighten}
    |tf''_{\pm}(\zeta)| + |tf''_{\pm}(\xi)| \leq C^*
\end{align}
for a constant $C^*$. Since $f_{\pm}'(x)$ is bounded, and the curvature of the two sides of $\Omega(\w)$ is bounded from above by $\kappa^*/L(\w)$, we obtain
\begin{align*}
     |f''_{\pm}(x)| \leq C^*\kappa^*/L(\w).
\end{align*}
Combining this with the upper bound on $\delta$ from \eqref{eqn:delta-partition}, and since $|t|\leq \delta$, it implies \eqref{eqn:corner-straighten} and completes the proof of the lemma.
\end{proof}

We are now ready to state and prove the main bound on the number of corner nodal domains.

\begin{prop}[Number of corner nodal domains] \label{prop:corner}
Let $\Omega(\w)$ be a chain domain. There exists a constant $C_2^*>0$ such that for all $0<\eps<\tfrac{1}{2}$ and $0<\beta<\tfrac{1}{2}$ the following holds. If $u$ is a Neumann eigenfunction of $\Omega(\w)$ with eigenvalue $\mu$ and $|\Omega(\w)|\mu \geq (C_2^*)^{1/\beta}$, then 
\begin{align*}
    \nu_2^\delta(\eps, u) \leq C_2^*\eps^{-4}\left(|\Omega(\w)|\mu\right)^{3-6\beta},
\end{align*}
for $\delta=|\Omega(\w)|^{1/2-\beta}\mu^{-\beta}$.
\end{prop}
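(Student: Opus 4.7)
The plan is to count corner nodal domains vertex by vertex, adapting the straightening/Faber-Krahn machinery already used for boundary nodal domains. Since the vertex-control constant $\delta^*$ bounds the total number of vertices of $\Omega(\w)$ by $1/\delta^*$, it suffices to bound the count at a single vertex and then multiply. I would first split $\mathcal{V}_2^\delta(\epsilon;u)$ according to which clause of \eqref{e:corner nd} holds. The first clause is handled directly by Lemma \ref{lem:corner-straighten}. For the second, $\|u_4\|_{L^2(D\cap \cup_\ell \D_\ell)}^2 \geq \tfrac{1}{8}\epsilon \|u\|_{L^2(D)}^2$, I would prove an entirely analogous straightening lemma: the relevant vertices are those where the necks $\Q_{ij,k}(\w)$ meet the $\D_\ell$, and the vertex-control constant still gives a uniform bound $\theta^* \leq \theta_0 \leq 2\pi - \theta^*$ on their opening angles, so the diffeomorphism and Jacobian estimate from the proof of Lemma \ref{lem:corner-straighten} carry over, with $\chi_4^\delta$ playing the role of $\chi_2^\delta$ on the $\D_\ell$-side of the vertex. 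In both cases the output is a set $V \subset S_{v_0,\theta_0}(\delta)$ and $v \in H^1(V)$ with $v \equiv 0$ on $\partial V \cap S_{v_0,\theta_0}(\delta)$, $\int v^2 \geq c^*\epsilon$, and $\int|\nabla v|^2 \leq C^*(\mu+\delta^{-2})$.

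Next, I would fix a vertex $v_0$ with opening angle $\theta_0$ and let $V_1,\dots,V_N$ be the witnesses produced above for the contributing nodal domains $D_1,\dots,D_N$. Since the $D_k$ are pairwise disjoint, so are the $V_k$. To extract a lower bound on $|V_k|$, I would extend $v_k$ by zero across the interior Dirichlet boundary and then iteratively reflect across the two straight sides of the sector, using that $v_k$ inherits Neumann-type behaviour there from $u$. After at most $2\pi/\theta^*$ reflections this produces a function $\hat v_k \in H_0^1(B(v_0,\delta))$ on the disc of radius $\delta$ about $v_0$, whose support has area at most a constant multiple of $|V_k|$ and whose Rayleigh quotient is at most $C^*(\mu+\delta^{-2})/(c^*\epsilon)$. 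Faber-Krahn on $B(v_0,\delta)$ then yields
\begin{equation*}
    |V_k| \geq \frac{c^*\epsilon}{\mu+\delta^{-2}}.
\end{equation*}
Summing over the disjoint $V_k$ in the sector, which has area at most $\pi\delta^2$, gives a per-vertex count of at most $C^*\epsilon^{-1}\delta^2(\mu+\delta^{-2})$. With $\delta = |\Omega(\w)|^{1/2-\beta}\mu^{-\beta}$ this is $C^*\epsilon^{-1}\bigl((|\Omega(\w)|\mu)^{1-2\beta}+1\bigr)$, which for $\epsilon\leq \tfrac12$, $\beta\leq\tfrac12$ and $|\Omega(\w)|\mu \geq (C_2^*)^{1/\beta}$ is dominated by $C_2^*\epsilon^{-4}(|\Omega(\w)|\mu)^{3-6\beta}$; multiplying by the at most $1/\delta^*$ vertices concludes.

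The main obstacle is the reflection step at the corner: the opening angle $\theta_0$ is not assumed to divide $2\pi$, so after repeated reflections the images may overlap. One must argue that the multiplicity of the cover is bounded by $2\pi/\theta^*$, a constant depending only on $\delta^*$, so that the $L^2$, $H^1$, and support inequalities pass to $\hat v_k$ up to a bounded factor. An alternative that avoids reflections entirely would be to invoke directly the mixed-boundary spectral theory of the sector: with Neumann on the straight sides and Dirichlet on the arc, the eigenvalues are $j_{m\pi/\theta_0,n}^2/\delta^2$ and the first one equals $\lambda_1(\mathbb{D})/\delta^2$, giving a direct Faber-Krahn inequality for mixed subsets of the sector. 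A secondary technical issue is ensuring the reflected function lies in $H^1$ across the corner vertex, which is handled by the regularity results of \cite{wigley1970} already invoked in the proof of Lemma \ref{lem:Green}.
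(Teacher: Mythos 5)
Your proposal takes a genuinely different route from the paper, and it contains a gap at its central step. The paper's proof of Proposition \ref{prop:corner} never reflects across the sides of the sector and never invokes the Neumann condition near the corner at all; after applying Lemma \ref{lem:corner-straighten} it uses only the Dirichlet condition on $\partial V$ inside the sector. It then finds a slice $\{x=x^*\}$ carrying a fixed fraction of the $L^2$-mass, uses the fundamental theorem of calculus to propagate that mass to nearby slices within a much shorter distance $\tilde\delta \sim (\delta\mu)^{-1}$, and inserts a cut-off $\chi$ supported entirely in the interior of the sector so that $\chi w$ is a genuine Dirichlet test function in $\R^2$. The ordinary Faber--Krahn inequality then applies, and the price paid for shrinking to scale $\eps\tilde\delta$ is exactly the extra factors that produce $\eps^{-4}(|\Omega(\w)|\mu)^{3-6\beta}$ rather than the $\eps^{-1}(|\Omega(\w)|\mu)^{1-2\beta}$ you obtain.

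The gap in your reflection argument is the assertion that $v$ (or $\hat v_k$) ``inherits Neumann-type behaviour'' on the straight sides of the straightened sector. The straightening map $F$ in Lemma \ref{lem:corner-straighten} has Jacobian
\begin{align*}
     \begin{pmatrix}
    1 & t\displaystyle{\frac{sf'_{\pm}(s)-f_{\pm}(s)}{a_{\pm} s^2}}\\
   0 & \displaystyle{\frac{f_{\pm}(s)}{a_{\pm}s}}
    \end{pmatrix},
\end{align*}
which is not diagonal, so the Neumann condition on the curved boundary $\pa\Omega(\w)$ is pushed forward to an oblique (conormal) condition on the rays $\theta=\pm\theta_0/2$, not to a Neumann condition. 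Even-reflecting a function that satisfies an oblique boundary condition across a line does not in general yield an $H^1$ function, so $\hat v_k$ is not a legitimate Dirichlet test function and the Faber--Krahn step cannot be applied. Compounding this, even if the Neumann condition were preserved, $\theta_0$ is not assumed to divide $2\pi$, so the iterated reflections do not glue to a well-defined function on $B(v_0,\delta)$; they live on a cone of angle $n\theta_0 > 2\pi$, and one would then need the Faber--Krahn inequality on such a singular cone rather than on a disc. Your alternative, a ``direct Faber--Krahn inequality for mixed subsets of the sector,'' is likewise not immediate: knowing the first eigenvalue of the full sector is $\lambda_1(\mathbb D)/\delta^2$ does not bound the first eigenvalue of an arbitrary subset $V$ with Dirichlet data only on its interior boundary; that requires a nontrivial symmetrization argument which you do not supply. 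The fact that your route appears to yield the strictly stronger exponent $1-2\beta$ is itself a warning sign: the paper accepts the weaker exponent $3-6\beta$ precisely because the slicing-plus-interior-cutoff argument is the one that actually closes, and the weaker exponent is still sublinear and thus sufficient for Theorem \ref{thm:Main}. A secondary, more cosmetic deviation: in Case $(B)$ the paper does not prove a new sector-straightening lemma at the neck vertices but instead reuses Lemma \ref{lem:boundary-straighten} to place $V$ in a half-plane and then repeats the slicing argument there; your proposed analogue would also work but is unnecessary.
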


\begin{proof}
Throughout, we assume that $(|\Omega(\w)|\mu)^{\beta}$ is sufficiently large so that $\delta$ satisfies \eqref{eqn:delta-partition}, and in particular Lemmas \ref{lem:boundary-straighten} and \ref{lem:corner-straighten} apply.  The proof of the bound is divided into two cases, depending on whether we are counting nodal domains $D \in \mathcal{V}^\delta_2(\epsilon, u)$ for which either
\begin{equation*}
\text{(A)}\;\;  \|u_2\|_{L^2(D)}^2 \geq \tfrac{1}{4}\eps\|u\|^2_{L^2(D)} \qquad \text{or}\qquad \text{(B)}\;\;   \|u_4\|_{L^2(D\cap\cup_\ell \D_\ell)}^2 \geq \tfrac{1}{8}\eps\|u\|^2_{L^2(D)}.
\end{equation*}
In Case 1 below, we prove that 
\begin{equation}\label{e:AA}
   \#\{D \in \mathcal V_2^\delta(\eps, u): (A) \,\text{holds}\} \leq C_2^*\eps^{-4}\left(|\Omega(\w)|\mu\right)^{3-6\beta},
\end{equation}
and we explain how to do the same with $(A)$ replaced by $(B)$ in Case 2 (see \eqref{e:BB}). The bound on $\nu_2^\delta(\eps, u)$ then follows from these two estimates.

\noindent\emph{\underline{Case 1: $D \in \mathcal{V}^\delta_2(\epsilon, u)$ is such that (A) holds.}} 

First, we claim that there exist constants $c^*$, $C^*_2$ such that, if $|\Omega(\w)|\mu \geq (C_2^*)^{1/\beta}$, then
\begin{align}\label{claim:corner}
    \Area\left(D\cap B_{\delta}(v_0)\right)\geq c^* \eps^{4} |\Omega(\w)|(|\Omega(\w)|\mu)^{4 \beta - 3}.
\end{align}
Here $B_{\delta}(v_0)$ is the disc of radius $\delta$ centered at the vertex $v_0$.
Since $B_{\delta}(v_0)$ has area $\pi \delta^{-2} = \pi  |\Omega(\w)|^{1-2\beta}\mu^{-2\beta}$, the bound in \eqref{claim:corner} yields \eqref{e:AA}
 as claimed.

We next proceed to prove
 the claim in \eqref{claim:corner}.
 Let $v, V$ be as in Lemma \ref{lem:corner-straighten}.
By the lower bound in \eqref{eqn:corner0b}, there exists $x^*$, with $|x^*|<\delta$ such that 
\begin{align} \label{eqn:corner2}
    \int_{V\cap\{x=x^*\}} v(x^*,y)^2\,dy \geq \tfrac{1}{2}c^*\eps \delta^{-1}.
\end{align}
We now extend $v$ identically by zero outside of $V$. Note that this does not give a function in $H^1(\mathbb{R}^2)$, because $v$ does not satisfy Dirichlet boundary conditions on the part of $\pa V$ coinciding with the sector $\theta = \pm\tfrac{1}{2}\theta_0$. However, denoting this extension of $v$ by $w$, for each $x^*$ such that $(x^*,x_2)$ is in $V$, we do have 
\begin{align} \label{eqn:extension}
    w(\cdot,y)\in H^1([x^*,x^*+\delta]),
\end{align}
with
\begin{align} \label{eqn:extension1}
    \int_{V\cap\{x=x^*\}}\left(\int_{x^*}^{x^*+\delta}|\nabla w|^2\,dx\right)\,dy \leq C^*(\mu+\delta^{-2}).
\end{align}
 This is because for all such $y$, the interval $(x^*,x^*+ \delta)$ is contained in interior of the sector, where $v$ is a smooth function. Therefore, since $v$ vanishes on the portion of $\pa D$ in the interior of $\Omega$, its extension by $0$ satisfies \eqref{eqn:extension}, $w(\cdot,y)$ is continuous on $(x^*,x^*+ \delta)$, and \eqref{eqn:extension1} follows from the second estimate in \eqref{eqn:corner0b}. In particular, $\pa_tw(t,y)$ is integrable for $t\in(x^*,x^*+ \delta)$, and this means that by the fundamental theorem of calculus
\begin{align*}
    w(x,y) = w(x^*,y) + \int_{x^*}^{x}\pa_tw(t,y)\,dt
\end{align*}
for $x\in (x^*,x^*+\delta)$. We can write
\begin{align*}
    \left|\int_{x^*}^{x}\pa_tw(t,y)\,dt\right| \leq |x-x^*|^{1/2}\left(\int_{x^*}^{x}|\pa_tw(t,y)|^2\,dt\right)^{1/2},
\end{align*}
and so 
\begin{align*}
    w(x,y)^2 & = w(x^*,y)^2 + 2w(x^*,y)\int_{x^*}^{x}\pa_tw(t,y)\,dt + \left(\int_{x^*}^{x}\pa_tw(t,y)\,dt\right)^2 \\
    & \geq \tfrac{1}{2}w(x^*,y)^2 - \left(\int_{x^*}^{x}\pa_tw(t,y)\,dt\right)^2 \\
    & \geq  \tfrac{1}{2}w(x^*,y)^2 - |x-x^*|\int_{x^*}^{x}|\pa_tw(t,y)|^2\,dt.
\end{align*}
Integrating in $y$ and using \eqref{eqn:corner2} this implies that
\begin{align} \label{eqn:corner3}
    \int_{V\cap\{x=x^*\}}w(x,y)^2\,dy \geq \tfrac{1}{4}\eps c^*\delta^{-1} - |x-x^*|\int_{V\cap\{x=x^*\}}\left(\int_{x^*}^{x}|\pa_tw(t,y)|^2\,dt\right)\,dy.
\end{align}
Using the estimate from \eqref{eqn:extension1} in \eqref{eqn:corner3} then gives
\begin{align*}
     \int_{V\cap\{x=x^*\}}w(x,y)^2\,dy \geq \tfrac{1}{4}\eps c^*\delta^{-1}- C^*|x-x^*|(\mu + \delta^{-2}).
\end{align*}
So, 
\begin{align} \label{eqn:corner4}
\int_{V\cap\{x=x^*\}}v(x,y)^2\,dy = \int_{V\cap\{x=x^*\}}w(x,y)^2\,dy \geq \tfrac{1}{8}c^*\eps \delta^{-1}
\end{align}
for all $x\geq x^*$ with $|x-x^*|\leq\tfrac{1}{8}c^*\eps \delta^{-1} (C^*)^{-1}(\mu+ \delta^{-2})^{-1}$.  For simplicity, we define the new parameter
\[
\tilde{\delta} = c^*(C^*)^{-1}\delta^{-1}  (\mu + \delta^{-2})^{-1}.
\]
Note that we have set $\delta = |\Omega(\w)|^{1/2-\beta}\mu^{-\beta}$, and so $\tilde{\delta} \sim \delta^{-1}\mu^{-1} = |\Omega(\w)|^{1/2}(|\Omega(\w)|\mu)^{\beta-1}$ for large $|\Omega(\w)|\mu$ and $0<\beta<\tfrac{1}{2}$. In particular, $\tilde{\delta}<\delta$ for large $|\Omega(\w)|\mu$.

Now let $\chi = \chi(x,y)$ be a smooth cut-off function, centred at $(x^* + \tfrac{1}{16}\eps \tilde{\delta},0)$, and with the following properties: It equals $1$ in a disc of radius $c^*_0 \epsilon \tilde{\delta}$ centred at this point, vanishes outside the disc of radius $2c^*_0 \epsilon \tilde{\delta}$, with first derivatives bounded by $C_0^* \epsilon^{-1} \tilde{\delta}^{-1}$. Here $c^*_0>0$ and $C_0^*$ are chosen, depending only on the interior angle $\theta_0$ at the vertex (which is bounded from below by a constant which only depends on $\delta^*$), so that the support of $\chi$ is contained within the interior of the sector. Using \eqref{eqn:corner4}, it can also be chosen so that
\begin{align*}
\int_{V}\chi^2w^2 \geq c^* \eps^2 \tilde{\delta} \delta^{-1}
\end{align*}
for a constant $c^*>0$. Here, and from now on, the constants $c^*$, $C^*$ may change from line-to-line (but will depend only on the five geometric constants of $\Omega(\w)$).  Let $W_{\chi}$ be the support of $\chi$. Then, the definition of $\chi$ and \eqref{eqn:corner0b} also ensures that
\begin{align*}
    \int_{V\cap W_{\chi}}\left|\nabla\left(\chi w\right)\right|^2 \leq C^*(\mu +\delta^{-2}+ \tilde{\delta}^{-2} \epsilon^{-2}).
\end{align*}
Since $W_{\chi}$ is contained in the interior of the sector, $\chi w$ vanishes on the boundary of $V\cap W_{\chi}$. This implies that the first Dirichlet eigenvalue of $V\cap W_{\chi}$ is bounded by a multiple of 
\begin{align} \label{eqn:corner-end}
\frac{\int_{V\cap W_{\chi}}\left|\nabla\left(\chi w\right)\right|^2}{\int_{V\cap W_{\chi}}\chi^2w^2 } \leq  C^*\eps^{-2}\tilde{\delta}^{-1}\delta(\mu +\delta^{-2}+ \tilde{\delta}^{-2} \epsilon^{-2}).
\end{align}
Since $\delta = |\Omega(\w)|^{1/2-\beta}\mu^{-\beta}$, for $0<\beta<\tfrac{1}{2}$, and for $(|\Omega(\w)|\mu)^{\beta}\geq C^*$ sufficiently large, the right hand side of \eqref{eqn:corner-end} can be bounded from above by
\begin{align} \label{eqn:corner-end1}
   C^*\eps^{-4}\delta^4\mu^3 = C^*\eps^{-4}|\Omega(\w)|^{-1}(|\Omega(\w)|\mu)^{3-4\beta}.
\end{align}
Therefore, using the Faber-Krahn inequality as in \eqref{e:FK},
\begin{align*}
    \text{Area}(V\cap W_{\chi}) \geq  c^*\eps^{4}|\Omega(\w)|(|\Omega(\w)|\mu)^{4\beta-3},
\end{align*}
completing the proof of the claim in \eqref{claim:corner}.

\noindent \underline{\emph{Case 2: $D \in \mathcal{V}^\delta_2(\epsilon, u)$ is such that $(B)$ holds.}}

In this case, some of the mass of $u$ is contained in the intersection of the support of $\chi_4$ with a particular domain $\D_\ell$ (near where $\D_\ell$ and a neck $ \Q_{ij,k}(\w)$ are joined).  Then, we straighten the part of $\pa \D_\ell$ using Lemma \ref{lem:boundary-straighten} 
to again obtain, after a rotation, a Lipschitz set $V$ contained in the half-plane $\{x>0\}$, and a function $v\in H^1(V)$ with 
\begin{align*} 
    \text{Area}(V) \leq C^*\text{Area}(\{x\in D\cap \D_\ell:\chi_4^{\delta}u\neq0\}),
\end{align*}
and
\begin{align*} 
   c^*\eps \leq \int_{V}v^2 \leq C^*, \qquad \int_{V}\left|\nabla v\right|^2 \leq C^*\left(\mu + \delta^{-2}\right).
\end{align*}
Moreover, $v$ can be taken to vanish on $\pa V\cap\{x>0\}$. This then allows us to replicate the proof of Case 1) and show that 
\begin{equation}\label{e:BB}
   \#\{D \in \mathcal V_2^\delta(\eps, u): (B) \,\text{holds}\} \leq C_2^*\eps^{-4}\left(|\Omega(\w)|\mu\right)^{3-6\beta},
\end{equation}
as needed to finish the proof of the proposition.

\end{proof}

\section{Estimates on neck nodal domains} \label{sec:neck}
In this section, we will obtain an upper bound on the number of neck nodal domains.
Let $\Omega(\w)$ be a chain domain,  $\delta, \epsilon>0$, and  $u$ be a Neumann eigenfunction for $\Omega(\w)$. We recall that a neck nodal domain, $D\in \mathcal{V}^\delta_3(\epsilon, u)$,  is one for which \eqref{e:neck nd} holds. As introduced in \eqref{e:dom number}, we  write $\nu^{\delta}_3(\eps;u)$ for the number of neck nodal domains of $u$.

This section is divided into three parts. Given $D\in\mathcal{V}^\delta_3(\epsilon, u)$, in Section \ref{s:straight L} we explain how to find a neck $\mathcal N$ for which there is a lower bound on the area of $\mathcal{N}\cap D$.  This lower bound will be given in the form of $\Area(V)$ where $V$ is a subset of a flat cylinder or a strip into which the neck has been straightened. Then, in Section \ref{s: cyl} we explain how to find a lower bound on $\Area(V)$, when $V$ is a subset of a cylinder, in terms of its first Dirichlet eigenvalue. Finally, in Section \ref{s:neck res} we state and prove the bound on $\nu^{\delta}_3(\eps;u)$.

\subsection{Straightening lemmas}\label{s:straight L}

Given a nodal domain  $D \in \mathcal{V}^\delta_3(\epsilon, u)$, we know that one of the following two inequalities hold:
\begin{equation}\label{e: tildeAB}
{(\tilde{A})}\;\;  \|u_3\|_{L^2(D)}^2 \geq \tfrac{1}{4}\eps\|u\|^2_{L^2(D)} \qquad \text{or}\qquad {(\tilde{B})}\;\;   \|u_4\|_{L^2(D\cap \cup  \Q_{ij,k}(\w))}^2 \geq \tfrac{1}{8}\eps\|u\|_{L^2(D)}^2.
\end{equation}
In the following lemmas, we explain how to find a neck $\mathcal N$ for which there is a lower bound on the area of $\mathcal{N}\cap D$ in both the  $(\tilde{A})$ and $(\tilde{B})$ cases.

\begin{lem} \label{lem:neck-straighten}
Let $\Omega(\w)$ be a chain domain. There exist constants $c^*$, $C^*$  such that the following holds. Let $\delta>0$ satisfy  \eqref{eqn:delta-partition}, $\varepsilon>0$, and $u$ be a Neumann eigenfunction of $\Omega(\w)$ with eigenvalue $\mu$. Let  $D \in \mathcal{V}^\delta_3(\epsilon, u)$  such that $(\tilde{A})$ holds.
Then, there exist a neck $ \Q_{ij,k}(\w)$, a set $V$ with a Lipschitz boundary that is a subset of a flat cylinder of circumference $4w_{ij,k}$, and a function $v\in H_0^1(V)$ with the following properties:
\begin{align} \label{eqn:neck1}
    \Area(V) \leq C^*\Area(\{x\in D\cap  \Q_{ij,k}(\w):\chi_3^\delta u\neq0\})
\end{align}
and 
\begin{align} \label{eqn:neck2}
    c^*\eps \leq \int_{V}v^2\leq C^*, \qquad \int_{V}|\nabla v|^2 \leq C^*(\mu+\delta^{-2}).
\end{align}
\end{lem}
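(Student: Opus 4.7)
The plan is to isolate one neck from the hypothesis, straighten it with $F_{ij,k}$ to a rectangle, reflect across its top edge to embed it into a flat cylinder of circumference $4w_{ij,k}$, and then transport the three quantitative estimates. Since $\chi_3^\delta$ is supported in the union of the narrow necks $\Q_{ij,k}(\w)$ with $w_{ij,k}\leq 4\delta$, and since the total number of necks is controlled by the vertex control constant $\delta^*$ (each neck contributes a bounded number of vertices to $\partial\Omega(\w)$), a pigeonhole argument singles out one neck $\Q_{ij,k}(\w)$ for which
\[
\|\chi_3^\delta u\|_{L^2(D\cap\Q_{ij,k}(\w))}^2 \geq c^*\eps\,\|u\|_{L^2(D)}^2.
\]
After normalising $u$ so that $\|u\|_{L^2(\Omega(\w))}=1$, this gives both a lower bound proportional to $\eps$ on the relevant neck $L^2$-mass and, by $|\chi_3^\delta|\leq 1$ and disjointness of nodal domains, an upper bound proportional to $1$.

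Next I would apply the straightening diffeomorphism $F_{ij,k}$ of Remark \ref{rem:neck-straighten} to unfold $\Q_{ij,k}(\w)$ onto the rectangle $R=[0,L_{ij,k}]\times(-w_{ij,k},w_{ij,k})$, with Jacobian pinched above and below by constants depending only on $w^*$. I would reflect the pullback $\widetilde v:=(\chi_3^\delta u)\circ F_{ij,k}^{-1}$ evenly across $t=w_{ij,k}$ to obtain a function on $[0,L_{ij,k}]\times(-w_{ij,k},3w_{ij,k})$, and identify the outer edges to produce a flat cylinder $\mathcal C$ of circumference $4w_{ij,k}$. Letting $V\subset\mathcal C$ be the open set corresponding to $D\cap\Q_{ij,k}(\w)\cap\{\chi_3^\delta u\neq 0\}$ together with its reflection, and $v$ the restriction of the reflected function to $V$, then $v$ vanishes on $\partial V$: the interior portion of $\partial V$ comes from the nodal boundary of $D$, where $u=0$, and the $s$-ends come from $\chi_3^\delta\equiv 0$ near $s\in\{0,L_{ij,k}\}$, giving $v\in H_0^1(V)$. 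The three estimates now follow by transport: the area bound \eqref{eqn:neck1} from the factor-of-two reflection and the $w^*$-bounded Jacobian, the $L^2$ bounds \eqref{eqn:neck2} from the pigeonhole inequality and $|\chi_3^\delta|\leq 1$, and the gradient bound from Lemma \ref{lem:Green} on $D$ combined with $|\nabla \chi_3^\delta|\leq C^*\delta^{-1}$ from Lemma \ref{lem:partition}, which give
\[
\int_V |\nabla v|^2 \leq C^*\!\!\!\int_{D\cap\Q_{ij,k}(\w)}\!\!\!|\nabla(\chi_3^\delta u)|^2 \leq C^*\bigl(\mu+\delta^{-2}\bigr).
\]

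The main obstacle is ensuring that the even reflection across $\{t=\pm w_{ij,k}\}$ genuinely produces an $H^1$ function on $\mathcal C$. This requires both that $u$ satisfy the Neumann condition on the top and bottom of the neck (which it does, as those edges are smooth pieces of $\partial\Omega(\w)$), and that $\chi_3^\delta$ pull back to a function depending only on the longitudinal variable $s$ in the straightened coordinates, so that the normal derivatives match across the reflection seam. This must be built into the construction of the partition of unity in Lemma \ref{lem:partition}: for the narrow necks one takes $\chi_4^\delta$ as a cutoff in $s$ alone, localised near $s\in\{0,L_{ij,k}\}$, and sets $\chi_3^\delta=\sqrt{1-(\chi_4^\delta)^2}$ on the neck, again a function of $s$ only. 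With that choice, the pullback of $\chi_3^\delta u$ inherits the Neumann condition of $u$ and the reflection is legitimately in $H^1(\mathcal C)$.
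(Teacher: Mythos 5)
Your proposal follows the same path as the paper: a pigeonhole over the (at most $1/(4\delta^*)$) necks to isolate one where $u_3 = \chi_3^\delta u$ carries at least an $\varepsilon\delta^*$-fraction of $\|u\|_{L^2(D)}^2$; straightening via the diffeomorphism $F_{ij,k}$ of Remark~\ref{rem:neck-straighten} to the rectangle $[0,L_{ij,k}]\times(-w_{ij,k},w_{ij,k})$; even reflection across one longitudinal edge and gluing to obtain a flat cylinder of circumference $4w_{ij,k}$; transporting the three estimates using the $w^*$-bounded Jacobian, the fact $|\chi_3^\delta|\le 1$, the bound $|\nabla\chi_3^\delta|\le C^*\delta^{-1}$ from Lemma~\ref{lem:partition}, and Lemma~\ref{lem:Green}. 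The verification that $v$ vanishes on $\partial V$ (nodal boundary gives $u=0$, and $\chi_3^\delta\equiv 0$ near the $s$-ends since $\chi_4^\delta\equiv 1$ there) also matches.

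One correction: the obstacle you single out is not actually one. Even extension of an $H^1$ function across a Lipschitz boundary always yields an $H^1$ function, with the weak normal derivative simply flipping sign; no matching of normal derivatives across the seam is required for $H^1$ (it would matter for $H^2$ or for the PDE to persist). Likewise the gluing at $t=-w_{ij,k}\sim 3w_{ij,k}$ only needs matching traces, which the even reflection automatically provides. So neither the Neumann condition on the neck walls nor the $s$-only dependence of $\chi_3^\delta$ is needed for $v\in H_0^1(V)$ — although the latter is a natural design choice for the partition of unity and matches what the paper implicitly does. A second small point: to get the stated two-sided bound $c^*\varepsilon\le\int_V v^2\le C^*$ you should normalize $\|u\|_{L^2(D)}=1$ rather than $\|u\|_{L^2(\Omega(\w))}=1$; your normalization gives the upper bound but loses the lower one (only the Rayleigh quotient is used downstream, so this is cosmetic, but the lemma as stated needs the per-domain normalization).
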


\begin{proof}
As $1/\delta^*$ is an upper bound on the number of vertices of $\Omega(\w)$, and each neck contributes four vertices, $\Omega(\w)$ has at most $1/(4\delta^*)$ necks. Since $\|u_3\|_{L^2(D)}^2 \geq \tfrac{1}{4}\eps\|u\|^2_{L^2(D)}$, there therefore exists a neck $ \Q_{ij,k}(\w)$ such that
\begin{align*}
    \int_{D\cap  \Q_{ij,k}(\w)}u_3^2  \geq \eps\delta^*\int_{D}u^2.
\end{align*}
 We note that the support of the neck cut-off function $\chi_3^{\delta}$ only intersects a neck $ \Q_{ij,k}(\w)$  when $w_{ij,k}\leq 4\delta$ (see Definition \ref{defn:partition}). Recall that $w_{ij,k}$ is the minimum width of the neck $\Q_{ij,k}(\w)$.

As we commented in Remark \ref{rem:neck-straighten}, by the definition of the neck $\Q_{ij,k}(\w)$, there exists a diffeomorphism $F_{ij,k}$, 
\begin{align*}
    F_{ij,k}:[0,L_{ij,k}]&\times (-w_{ij,k},w_{ij,k}) \to \Q_{ij,k}(\w) \\
       F_{ij,k}(s,t) & = \G_{ij,k}\left(s,t_1 + (t+w_{ij,k}) |I_{ij,k}|/(2w_{ij,k})\right).
\end{align*} 
The Jacobian of $F_{ij,k}$ is bounded from above and below by a constant depending only on the neck-width constant $w^*$. Then, we define the function $\tilde u := u_3 \circ F_{ij,k}$ and set $\tilde{V}$ given by
\begin{align*} 
F^{-1}_{ij,k}(D\cap\{u_3\neq0\}) \subset [0,L_{ij,k}]\times(-w_{ij,k},w_{ij,k}).
\end{align*}
Note that $\tilde{V}$ has Lipschitz boundary and meets the lines $t=\pm w_{ij,k}$ at non-zero angles, by Remark \ref{rmk:lip}. We now reflect $\tilde{u}$ and $\tilde{V}$ across the line $t=w_{ij,k}$, and glue across the line $t=-w_{ij,k}$. This gives a subset $V$ of a cylinder of circumference $4w_{ij,k}$, with Lipschitz boundary, and a function $v\in H^1_0(V)$, with the required properties.
\end{proof}

\begin{lem} \label{lem:neck-straighten2}
Let $\Omega(\w)$ be a chain domain. There exist constants $c^*$, $C^*$  such that the following holds. Let $\delta>0$ satisfy  \eqref{eqn:delta-partition}, $\varepsilon>0$, and $u$ be a Neumann eigenfunction of $\Omega(\w)$ with eigenvalue $\mu$. Let  $D \in \mathcal{V}^\delta_3(\epsilon, u)$  such that $(\tilde{B})$ holds.

Then, there exist a neck $ \Q_{ij,k}(\w)$, a set $V$ with a Lipschitz boundary that is a subset of a strip of width $2w_{ij,k}$ and length $C^*\delta$,
 and a function $v\in H^1(V)$ with the following properties: 
\begin{align*}
    \Area(V) \leq C^*\Area(\{x\in D\cap  \Q_{ij,k}(\w)\,:\, \chi_4^{\delta}u \neq0\})
\end{align*}
and 
\begin{align} \label{eqn:neck-straighten2a}
    c^*\eps \leq \int_{V}v^2\leq C^*, \quad \int_V|\nabla v|^2 \leq C^*(\mu+\delta^{-2}).
\end{align}
\end{lem}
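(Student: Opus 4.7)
The plan is to follow the same blueprint as the proof of Lemma 5.1, but without the reflection step: instead of producing a function on a cylinder, we will extract a piece of a rectangle near one end of the neck. Since each neck contributes four vertices and $\Omega(\w)$ has at most $1/\delta^*$ vertices, there are at most $1/(4\delta^*)$ necks. Pigeonholing the hypothesis $(\tilde{B})$ over the necks yields a neck $\Q_{ij,k}(\w)$ with
\begin{align*}
\int_{D\cap \Q_{ij,k}(\w)} u_4^2 \,\geq\, c^*\eps \int_D u^2.
\end{align*}
By case $(ii)$ of Definition \ref{defn:partition}, the cut-off $\chi_4^{\delta}$ is supported inside $\Q_{ij,k}(\w)$ only when $w_{ij,k}\leq 4\delta$, so this pigeonholed neck automatically has small minimum width. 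Furthermore, within that neck, $\chi_4^{\delta}$ is supported in the $\delta$-neighborhood of the two ends $\G_{ij,k}(\{0\}\times I_{ij,k})$ and $\G_{ij,k}(\{L_{ij,k}\}\times I_{ij,k})$; applying pigeonhole once more, we may assume that at least half of the mass concentrates near one end, which after possibly relabeling we take to be $s=0$.

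Next, I would apply the straightening diffeomorphism $F_{ij,k}$ from Remark \ref{rem:neck-straighten}, whose Jacobian is bounded above and below by constants depending only on the neck-width constant $w^*$. Define
\begin{align*}
V := F_{ij,k}^{-1}\bigl(D\cap\{\chi_4^{\delta}u\neq 0\}\bigr)\cap \bigl([0,C^*\delta]\times(-w_{ij,k},w_{ij,k})\bigr), \qquad v:= u_4\circ F_{ij,k},
\end{align*}
where $C^*$ is chosen so that $F_{ij,k}([0,C^*\delta]\times(-w_{ij,k},w_{ij,k}))$ covers the support of $\chi_4^{\delta}$ near the selected end; such a $C^*$ exists because the Jacobian of $F_{ij,k}$ is bounded and the support lies within a Euclidean $\delta$-neighborhood of $\G_{ij,k}(\{0\}\times I_{ij,k})$. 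Then $V$ is a subset of a strip of width $2w_{ij,k}$ and length $C^*\delta$, and by Remark \ref{rmk:lip} together with the bi-Lipschitz property of $F_{ij,k}$, $V$ has Lipschitz boundary.

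Finally, I would verify the three estimates. The area bound $\Area(V)\leq C^* \Area(\{x\in D\cap \Q_{ij,k}(\w):\chi_4^{\delta}u\neq 0\})$ is immediate from the bounded Jacobian. The lower bound on $\int_V v^2$ comes from transferring the pigeonholed mass lower bound through $F_{ij,k}$, and the upper bound $\int_V v^2\leq C^*$ follows from a suitable normalization of $u$ (as used implicitly throughout in applying Faber--Krahn to ratios). For the gradient bound, I would expand $\nabla u_4=\chi_4^{\delta}\nabla u+u\nabla \chi_4^{\delta}$, combine the pointwise bound $|\nabla\chi_4^{\delta}|\leq C^*\delta^{-1}$ from Lemma \ref{lem:partition} with the Green identity of Lemma \ref{lem:Green} to control $\int_D|\nabla u|^2\leq \mu\int_D u^2$, and absorb the change of variables through the Jacobian bound. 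Unlike Lemma \ref{lem:neck-straighten}, we do not impose $v\in H_0^1$: the trace of $v$ on the inner boundary $\{s=C^*\delta\}\cap V$ is generally non-zero, because $\chi_4^{\delta}$ does not vanish there, and this is exactly why the conclusion only asserts $v\in H^1(V)$. The main subtlety, and the only place where care is needed, is choosing the constant $C^*$ controlling the length of the strip consistently with the support of $\chi_4^{\delta}$ and the distortion of $F_{ij,k}$; this reduces to tracking the dependence on $w^*$ and is routine.
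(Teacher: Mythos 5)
Your proposal is correct and follows essentially the same approach as the paper: pigeonhole over the (at most $1/(4\delta^*)$) necks, apply the straightening diffeomorphism $F_{ij,k}$ from Remark \ref{rem:neck-straighten}, take $v=u_4\circ F_{ij,k}$ and $V$ to be the preimage of $D\cap\Q_{ij,k}(\w)\cap\{u_4\neq0\}$, and invoke Remark \ref{rmk:lip} for the Lipschitz property. One small point in your favor: the paper simply takes $V=F_{ij,k}^{-1}(D\cap\Q_{ij,k}(\w)\cap\{u_4\neq0\})$ and observes that this lies where $\chi_4^\delta$ is supported, i.e.\ near the two ends of the neck; strictly speaking that is a union of two strips of length $O(\delta)$ rather than a single one, so your extra pigeonhole to concentrate at one end (and the accompanying restriction of $V$ to $[0,C^*\delta]\times(-w_{ij,k},w_{ij,k})$) is a genuine, if minor, clarification that makes the statement ``subset of a strip of width $2w_{ij,k}$ and length $C^*\delta$'' literally true and dovetails more cleanly with the one-sided integration argument used downstream in Proposition \ref{prop:neck}. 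Your remaining verifications (area bound via the Jacobian, mass lower bound via the pigeonhole, $L^2$ upper bound via the implicit normalization $\|u\|_{L^2(D)}=1$, gradient bound via $\nabla u_4=\chi_4^\delta\nabla u+u\nabla\chi_4^\delta$, Lemma \ref{lem:partition}(4), and Lemma \ref{lem:Green}) match the paper's intent, and your remark on why only $v\in H^1(V)$ rather than $H^1_0(V)$ is asserted is correct.
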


\begin{proof}
This lemma follows using the same idea as for the proof of Lemma \ref{lem:neck-straighten}: There exists a neck $ \Q_{ij,k}(\w)$ such that the lower bound
\begin{align*}
    \int_{D\cap  \Q_{ij,k}(\w)}u_4^2\,dx \geq \tfrac{1}{2}\eps\delta^*\int_{D}u^2\,dx
\end{align*}
holds. We can use the same diffeomorphism $F_{ij,k}$ to transform this neck to an exact strip of width $2w_{ij,k}$, with a bounded Jacobian. Since the cut-off function $\chi_4^{\delta}$ is supported in $\delta$-neighborhood of the ends of the neck $\G_{ij,k}(\{0\}\times I_{ij,k})$ and $\G_{ij,k}(\{L_{ij,k}\}\times I_{ij,k})$, the function $v=  u_4\circ F_{ij,k}$ and set $V = F_{ij,k}^{-1}(D\cap\Q_{ij,k}(\w)\cap\{u_4\neq0\})$ satisfy the estimates given in the statement of the lemma. In particular, as we commented in Remark \ref{rmk:lip}, this ensures that the set $V$ has Lipschitz boundary.
\end{proof}

\subsection{Controlling the area of a subset of a cylinder}\label{s: cyl}

Given $D\in \mathcal{V}^\delta_3(\epsilon, u)$ satisfying $(\tilde A)$, Lemma \ref{lem:neck-straighten} gives the existence of a neck $\mathscr{N}_{ij,k}(\w)$  so that the area of $\mathscr{N}_{ij,k}(\w) \cap D$ can be bounded below by the area  $\Area(V)$ where $V$ is a subset of a flat cylinder. In this section we explain how to obtain a lower bound on $\Area(V)$ in terms of the first Dirichlet eigenvalue for $V$.

\begin{lem} \label{lem:cylinder}
Let $\mathscr{C}_{_{\!P}}$ be the flat infinite cylinder with circumference $P>0$, and let $V$ be a Lipschitz set on $\mathscr{C}_{_{\!P}}$ with first Dirichlet eigenvalue equal to $\lambda$. Then,
\begin{align*}
    \Area(V) \geq \min\{\pi\lambda_1(\mathbb{D})\lambda^{-1}, P\lambda_1(\mathbb{D})^{1/2}\lambda^{-1/2} \}.
\end{align*}
\end{lem}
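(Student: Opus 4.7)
The plan is to run the classical coarea proof of Faber--Krahn using the isoperimetric inequality on $\mathscr{C}_{_{\!P}}$ in place of the planar one. The optimal isoperimetric profile on $\mathscr{C}_{_{\!P}}$ is a geodesic disc for enclosed area at most $P^2/\pi$ and a full transversal ring otherwise, so
\[
|\partial A| \;\geq\; \min\bigl(2\sqrt{\pi|A|},\,2P\bigr) \qquad \text{for all } A\subset \mathscr{C}_{_{\!P}}.
\]
Taking the first Dirichlet eigenfunction $v\geq 0$ with $\int v^2 = 1$ and setting $\mu(t):=|\{v>t\}|$, the coarea formula combined with Cauchy--Schwarz yields the standard estimate
\[
\lambda \;\geq\; \int_0^\infty \frac{|\partial\{v>t\}|^2}{-\mu'(t)}\,dt \;\geq\; \int_0^\infty \frac{\min(4\pi\mu(t),\,4P^2)}{-\mu'(t)}\,dt.
\]

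Changing variables to $s=\mu(t)\in(0,\Area(V))$ with $T=\mu^{-1}$ (so that $T(\Area(V))=0$ and, by the standard layer-cake/integration-by-parts identity, $\int_0^{\Area(V)} T(s)^2\,ds = 1$), and then rescaling $s = \Area(V)\,u$ with $u\in(0,1)$, recasts the inequality as
\[
\lambda \;\geq\; \frac{4P^2}{\Area(V)^2}\,f(R),
\qquad
R := \frac{\pi\Area(V)}{P^2},
\qquad
f(R) := \inf_{T(1)=0}\frac{\int_0^1 \min(Ru,\,1)\,T'(u)^2\,du}{\int_0^1 T(u)^2\,du}.
\]
Since $\min(Ru,1)$ is nondecreasing in $R$ for each fixed $u$, the function $f$ is nondecreasing. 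For $R\leq 1$ one has $\min(Ru,1)=Ru$ throughout $(0,1)$, so the Euler--Lagrange equation is the singular Sturm--Liouville problem $-(u T')' = \gamma T$ on $(0,1)$ with $T(1)=0$ and $T$ bounded at $0$; the substitution $T(u)=J_0(2\sqrt{\gamma u})$ identifies this as Bessel's equation, whose first eigenvalue is $j_{0,1}^2/4=\lambda_1(\mathbb{D})/4$. Hence $f(R)=R\lambda_1(\mathbb{D})/4$ for $R\leq 1$ and $f(R)\geq f(1)=\lambda_1(\mathbb{D})/4$ for $R\geq 1$. Substituting these back produces $\lambda\geq\pi\lambda_1(\mathbb{D})/\Area(V)$ when $\Area(V)\leq P^2/\pi$ and $\lambda\geq\lambda_1(\mathbb{D})P^2/\Area(V)^2$ when $\Area(V)\geq P^2/\pi$, which rearrange to the two terms in the minimum.

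The main obstacle is the Sturm--Liouville analysis with the piecewise weight $\min(4\pi s,\,4P^2)$: one needs to justify the coarea/Cauchy--Schwarz step for a first Dirichlet eigenfunction only guaranteed to lie in $H^1_0(V)$, invoke the cylindrical isoperimetric inequality in the correct weak form for level sets of a Sobolev function, and treat carefully the singular endpoint $u=0$ so that the Bessel eigenvalue $j_{0,1}^2/4$ genuinely realizes $f(1)$. An equivalent alternative for the first term is to observe that when a longitudinal meridian $\{\tau=\tau_0\}\subset\mathscr{C}_{_{\!P}}$ is disjoint from $V$, cutting $\mathscr{C}_{_{\!P}}$ along it embeds $V$ isometrically into a planar strip and reduces the first-term bound directly to the classical planar Faber--Krahn inequality.
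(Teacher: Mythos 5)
Your proof is correct in substance and arrives at the same two-part bound, but by a genuinely different technical route. The paper's proof is a direct Schwarz symmetrization: for the nonnegative eigenfunction $v$ it builds a radial comparison function $F$ on a planar disc $\mathbb{D}(0)$ with $\Area(\mathbb{D}(0)) = \Area(V)$, so that $\int v^2 = \int_{\mathbb{D}(0)} F^2$, and then estimates $\int |\nabla v|^2 \geq \alpha \int_{\mathbb{D}(0)}|\nabla F|^2$ where $\alpha = 1$ when $\Area(V) \leq P^2/\pi$ (using the full isoperimetric inequality $L \geq 2\sqrt{\pi A}$), and $\alpha = P^2/(\pi\Area(V)) \leq 1$ when $\Area(V) \geq P^2/\pi$ (using the monotone lower bound $\min\{P/(\pi r(t)),1\} \geq P/\sqrt{\pi\Area(V)}$). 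In both cases the Rayleigh quotient of $F$ on $\mathbb{D}(0)$ does the work, so the first eigenvalue of the disc appears directly. You instead convert the coarea/Cauchy--Schwarz step into a one-dimensional weighted Sturm--Liouville variational problem for the decreasing rearrangement $T = \mu^{-1}$, observe that the reduced Rayleigh quotient $f(R)$ is monotone nondecreasing in $R = \pi\Area(V)/P^2$, compute $f(R) = R\,\lambda_1(\mathbb{D})/4$ for $R \leq 1$ explicitly via the substitution $T(u) = J_0(2\sqrt{\gamma u})$, and use $f(R) \geq f(1)$ for $R \geq 1$. This is a cleaner organizational framework: the monotonicity of $f$ replaces the paper's ad hoc lower bound on the isoperimetric length in Case 2, and the Bessel computation replaces the implicit appeal to the known first eigenfunction of the planar disc. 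The final inequalities $\lambda \geq \pi\lambda_1(\mathbb{D})/\Area(V)$ (when $R \leq 1$) and $\lambda \geq P^2\lambda_1(\mathbb{D})/\Area(V)^2$ (when $R \geq 1$) coincide with the paper's.

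Two small caveats. The technical issues you flag (regularity for the coarea step, the singular endpoint at $u=0$) are real, but they are the same ones present in any Faber--Krahn proof and the paper also disposes of them by citing the classical argument of Chavel; they are not a gap specific to your route. The alternative you mention at the end---cutting $\mathscr{C}_{_{\!P}}$ along a meridian disjoint from $V$---only applies when such a meridian exists, which fails exactly when $V$ wraps around the cylinder, i.e.\ precisely the case where the cylinder geometry is relevant; so it is not a substitute for the main argument, and you are right to present it only as a remark.
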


\begin{proof}
To prove the lemma, we use the classical proof of the Faber-Krahn inequality in the case of constant sectional curvature $\kappa=0$ (see \cite{chavel84}). However, instead of using the isoperimetric inequality in $\mathbb{R}^2$, we use the following inequality on the flat cylinder \cite[Theorem 6]{howards1999}: 
Let $W$ be a region on the cylinder $\mathscr{C}_{_{\!P}}$, enclosing area $A$ and of perimeter $L$. Then,
\begin{equation}\label{e:iso}
1) \;\;\; L\geq 2\sqrt{\pi A}\; \;\;\text{if}\;\;\; A\leq\tfrac{1}{\pi}P^2,\qquad \qquad2) \;\;\; L \geq 2P \;\;\;\text{if}\;\;\; A\geq\tfrac{1}{\pi}P^2,
\end{equation}
with equality only when $W$ is an isometric embedding of a round disc on the cylinder in the first case, and the region between two cross-sections of $\mathscr{C}_{_{\!P}}$ in the second case.

Let $v\in H_0^1(V)$ be a non-negative first Dirichlet eigenfunction of $V$, with eigenvalue $\lambda$. We then build a comparison function with circular level sets as follows: Let $\mathbb{D}(t)\subset \mathbb{R}^2$ be the disc of radius $r(t)$, such that  $\pi r(t)^2=|\mathbb{D}(t)|=\text{Area}(\{v>t\})$. Note that $r:[0, t_0] \to [0, r(0)]$,  with $t_0=\max v$ and $r(0)=\text{Area}(V)$, is continuous and strictly decreasing.

Next, let $\Psi=r^{-1}$, and define $F:\mathbb{D}(0) \to \mathbb R $ by  $F(p)=\Psi(|p|)$. Note that by the co-area formula
\begin{align}\label{e:coArea}
\int_{\{v=t\}} \tfrac{1}{|\nabla v| } d\sigma_t=-\tfrac{d}{ds}\big(\text{Area}(\{v>s\})\big)\big|_{s=t}  =-2\pi  r(t) r'(t),
\end{align}
where $d\sigma_t$ is the measure on the level set $\{v=t\}\subset V$. Therefore, since $t= \Psi(r(t))$,
\begin{align}\label{e:norm}
\int_D v^2 \,\text{dv}_g
&= \int_{0}^{t_0} t^2 \int_{\{v=t\}} \tfrac{1}{|\nabla v| } d\sigma_t dt  
 =  - 2 \pi  \int_{0}^{t_0} (\Psi(r(t)))^2 r(t) r'(t)dt
=   \int_{ \mathbb{D}(0) }|F|^2 \text{dv}.
\end{align}
Here we have used $\text{dv}_g$ and $\text{dv}$ to denote the area measure on $V$ and $\mathbb{R}^2$ respectively. Next, notice that by Cauchy-Schwartz $\left( \int_{\{v=t\}} \tfrac{1}{|\nabla v| } d\sigma_t \right)  \left( \int_{\{v=t\}} {|\nabla v| } d\sigma_t\right)  \geq \text{Length}(\{v=t\})^2$. Thus,  by \eqref{e:coArea} we have 
\begin{align*}
\int_{\{v=t\}} {|\nabla v| } d\sigma_t \geq  - \frac{\text{Length}(\{v=t\})^2}{2\pi  r(t) r'(t) }. 
\end{align*}
Using again the co-area formula 
\begin{align}\label{e:preIso}
\int_D|\nabla v|^2 \,\text{dv}_g
&= \int_{0}^{t_0} \int_{\{v=t\}} {|\nabla v| } d\sigma_t dt   
\geq - \int_{0}^{t_0}   \frac{\text{Length}(\{v=t\})^2}{2\pi  r(t) r'(t) } dt.
\end{align}
We now split into two cases depending on the relative size of Area$(\{v>t\})$ and the circumference $P$ of the cylinder $\mathscr{C}_{_{\!P}}$. 

\textbf{Case 1.} Suppose that the area of $V$ satisfies
\begin{align*}
    \text{Area}(V) \leq \tfrac{1}{\pi}P^2.
\end{align*}
Then, since $\text{Area}(\{v>t\})\leq \text{Area}(V)$,  case 1) in \eqref{e:iso} implies that 
$$
\text{Length}(\{v=t\}) \geq 2\sqrt{\pi A(t)} =  2\pi r(t).
$$
 Thus, 
using that $\Psi'(r(t))r'(t)=1$, the bound in \eqref{e:preIso} yields
\begin{align}\label{e:grad1}
\int_D|\nabla v|^2 \,\text{dv}_g
\geq - \int_{0}^{t_0}   \frac{2\pi r(t)}{r'(t) } dt=  - \int_{0}^{t_0}  2\pi r(t)(\Psi'(r(t)))^2 r'(t)  dt= \int_{ \mathbb{D}(0)}|\nabla F|^2 \,\text{dv}.
\end{align}
From \eqref{e:norm} and \eqref{e:grad1} we have  
\begin{align}\label{eqn:FK-1}
    \lambda \geq  \lambda_1(\mathbb{D}(0))= \frac{\pi }{\text{Area}(V)} \lambda_1(\mathbb{D}),
\end{align}
which can be rearranged to give $\text{Area}(V) \geq \pi\lambda_1(\mathbb{D})\lambda^{-1}$.

\textbf{Case 2.} Suppose now that the area of $V$ instead satisfies 
\begin{align*}
    \text{Area}(V) \geq \tfrac{1}{\pi}P^2.
\end{align*}
Set $A(t) = \text{Area}(\{v>t\})$. Then, the isoperimetric inequality in \eqref{e:iso} implies that
\begin{align*}
   \text{Length}(\{v=t\}) \geq \min\left\{2P,2\sqrt{\pi A(t)}\right\} = \min\left\{\frac{P}{\pi r(t)},1\right\} 2\pi r(t)\geq \frac{P}{\sqrt{\pi \text{Area}(V)}} 2\pi r(t).
\end{align*}
To get the last inequality we used that  $\frac{P}{\pi r(t)} \geq \frac{P}{\pi r(0)}=\frac{P}{\sqrt{\pi \text{Area}(V)}}$, together with  $\frac{P}{\sqrt{\pi \text{Area}(V)}} \leq 1$.
Thus, 
using that $\Psi'(r(t))r'(t)=1$, the bound in \eqref{e:preIso} yields
\begin{align}\nonumber
\int_D|\nabla v|^2 \,\text{dv}_g
\geq - \frac{P^2}{\pi \text{Area}(V)} \int_{0}^{t_0}   \frac{2\pi r(t)}{r'(t) } dt& =  -   \frac{P^2}{\pi \text{Area}(V)}\int_{0}^{t_0}  2\pi r(t)(\Psi'(r(t)))^2 r'(t)  dt\\\label{e:grad2}
& =   \frac{P^2}{\pi \text{Area}(V)} \int_{ \mathbb{D}(0)}|\nabla F|^2 \,\text{dv}.
\end{align}
From \eqref{e:norm} and \eqref{e:grad2} we have  
\begin{align} \label{eqn:FK-2}
    \lambda \geq   \tfrac{P^2}{\pi \text{Area}(V)} \lambda_1(\mathbb{D}(0))=  \tfrac{P^2}{\text{Area}(V)^2} \lambda_1(\mathbb{D}).
\end{align}
This can be rearranged to give $\text{Area}(V) \geq P\lambda_1(\mathbb{D})^{1/2}\lambda^{-1/2}$. Since either \eqref{eqn:FK-1} or \eqref{eqn:FK-2} must hold, this completes the proof of the proposition.
\end{proof}

\begin{remark}
We do not expect the second lower bound in Lemma \ref{lem:cylinder} to be sharp: The first lower bound of $\Area(V)\geq\pi\lambda_1(\mathbb{D})\lambda^{-1}$ is the same as the lower bound from the Faber-Krahn theorem for the disc. For the section $S_A$ on the cylinder $\mathscr{C}_{_{\!P}}$ of area $A$, the first Dirichlet eigenfunction is $\sin\left( \tfrac{P \pi x}{A}\right)$, with eigenvalue $\lambda = P^2\pi^2 A^{-2}$. Therefore, in this case, we have the equality $\Area(S_A) = P\pi \lambda^{-1/2}$.

Motivated by this, we conjecture that
\begin{align*}
     \Area(V) \geq \min\{\pi\lambda_1(\mathbb{D})\lambda^{-1}, P\pi\lambda^{-1/2} \},
\end{align*}
with the minimizer given by a disc on $\mathscr{C}_{_{\!P}}$ if $\lambda \geq \lambda_1(\mathbb{D})^2P^{-2}$, and a section of $\mathscr{C}_{_{\!P}}$ if $\lambda \leq \lambda_1(\mathbb{D})^2 P^{-2}$. The second lower bound on $\Area(V)$ from the proposition is off from this conjectured sharpest lower bound by a factor of
\begin{align*}
    \pi \lambda_1(\mathbb{D})^{-1/2} = \pi j_{0,1}^{-1} \approx 1.306.
\end{align*}
As this factor is independent of $\lambda$ and $P$, Lemma \ref{lem:cylinder} is sufficient for our nodal domain count estimate.
\end{remark}

\subsection{Number of neck nodal domains}\label{s:neck res}
We are now ready to state and prove our main result for this section.

\begin{prop}[Number of neck nodal domains] \label{prop:neck}
Let $\Omega(\w)$ be a chain domain. There exists a constant $C_3^*>0$ such that for all $0<\eps<\tfrac{1}{2}$ and $0<\beta<\tfrac{1}{2}$ the following holds.
If $u$ is a Neumann eigenfunction of $\Omega(\w)$ with eigenvalue $\mu$ and $|\Omega(\w)|\mu \geq (C_3^*)^{1/\beta}$, then 
\begin{align*}
     \nu_3^\delta(\eps;u) \leq C_3^*\left[ \eps^{-1}(|\Omega(\w)|\mu)^{1-\beta}+\eps^{-4}(|\Omega(\w)|\mu)^{3-6\beta}\right],
\end{align*}
for $\delta=|\Omega(\w)|^{1/2-\beta}\mu^{-\beta}$.
\end{prop}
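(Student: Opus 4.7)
The plan is to split the counting of $\mathcal{V}^\delta_3(\epsilon,u)$ according to whether $(\tilde{A})$ or $(\tilde{B})$ in \eqref{e: tildeAB} holds, producing the two terms in the bound. Throughout, I take $(|\Omega(\w)|\mu)^\beta$ large enough so that $\delta=|\Omega(\w)|^{1/2-\beta}\mu^{-\beta}$ satisfies \eqref{eqn:delta-partition} and $\mu+\delta^{-2}\leq 2\mu$, paralleling Propositions \ref{prop:bulk}, \ref{prop:boundary}, \ref{prop:corner}.

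For case $(\tilde{A})$ I would apply Lemma \ref{lem:neck-straighten} to obtain, for each such $D$, a neck $\Q_{ij,k}(\w)$, a Lipschitz $V$ on a flat cylinder of circumference $P=4w_{ij,k}$, and $v\in H^1_0(V)$ with $\int_V v^2\geq c^*\eps$, $\int_V|\nabla v|^2\leq C^*(\mu+\delta^{-2})\leq C^*\mu$, and $\Area(V)\leq C^*\Area(D\cap \Q_{ij,k}(\w)\cap\{\chi_3^\delta u\neq 0\})$. The Rayleigh quotient bounds the first Dirichlet eigenvalue by $\lambda(V)\leq C^*\eps^{-1}\mu$, and Lemma \ref{lem:cylinder} then gives $\Area(V)\geq \min\{c^*\eps\mu^{-1},\, c^*w_{ij,k}\eps^{1/2}\mu^{-1/2}\}$. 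Since $\chi_3^\delta$ is supported only in necks with $w_{ij,k}\leq 4\delta$, the total area of its support is at most $C^*\delta L(\w)\leq C^*(\rho^*)^{1/2}|\Omega(\w)|^{1-\beta}\mu^{-\beta}=C^*|\Omega(\w)|(|\Omega(\w)|\mu)^{-\beta}$. Summing $\Area(V)$ over these nodal domains and dividing by the first term of the $\min$ gives the count $\leq C^*\eps^{-1}(|\Omega(\w)|\mu)^{1-\beta}$; treating the domains in which the second term of the $\min$ is smaller neck-by-neck (total area in neck $\leq w_{ij,k}L_{ij,k}$, lower bound $\sim w_{ij,k}\eps^{1/2}\mu^{-1/2}$) yields the strictly smaller bound $C^*\eps^{-1/2}(|\Omega(\w)|\mu)^{1/2}$.

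For case $(\tilde{B})$ I would invoke Lemma \ref{lem:neck-straighten2} to obtain $V$ in a strip of width $2w_{ij,k}$ and length $C^*\delta$ with $v\in H^1(V)$ satisfying \eqref{eqn:neck-straighten2a}; crucially, $v$ is not required to vanish on the short end lying on the interface with the adjacent $\D_\ell$, so I must imitate Case 1 of the proof of Proposition \ref{prop:corner}. By choosing whichever endpoint of the neck contributes the majority of the mass, I may assume the interface sits at $s=0$ and that $v$ vanishes for $s>C^*\delta$. From $\int_V v^2\geq c^*\eps$ I extract a slice $\{s=s^*\}$ with $\int v(s^*,t)^2\,dt\geq c^*\eps\delta^{-1}$, then propagate via the fundamental theorem of calculus (as in \eqref{eqn:corner3}--\eqref{eqn:corner4}) to conclude $\int v(s,t)^2\,dt\geq c^*\eps\delta^{-1}$ for all $s$ within distance $\tilde\delta\sim \delta^{-1}\mu^{-1}$ of $s^*$. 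Multiplying $v$ by a cut-off $\chi(s)$ supported on an interval of length $\sim\eps\tilde\delta$ with $|\chi'|\leq C^*\eps^{-1}\tilde\delta^{-1}$ and reflecting across $t=\pm w_{ij,k}$ (as in Lemma \ref{lem:neck-straighten}) produces an $H^1_0$ function on a cylinder segment of circumference $4w_{ij,k}$. The bounds give Rayleigh quotient $\lambda\leq C^*\eps^{-4}\delta^4\mu^3$, identical to \eqref{eqn:corner-end1}, so Lemma \ref{lem:cylinder} yields $\Area(V\cap W_\chi)\geq c^*\eps^4|\Omega(\w)|(|\Omega(\w)|\mu)^{4\beta-3}$ from the first branch, and a strictly better bound from the second branch (when $w_{ij,k}$ is tiny). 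Dividing the total $\chi_4^\delta$-area in necks, which is at most $C^*\delta^2$ (at most $1/\delta^*$ necks, each endpoint contributing area $\leq w_{ij,k}\delta\leq 4\delta^2$), by this lower bound produces exactly $C^*\eps^{-4}(|\Omega(\w)|\mu)^{3-6\beta}$.

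Adding the two contributions yields the proposition. The main obstacle is Case $(\tilde{B})$: because the reduced function $v$ is only in $H^1$, a direct Faber–Krahn argument fails and I must reconstruct the propagation/cut-off scheme from the corner proof, with the added subtlety that the natural cut-off in the narrow strip must be taken as a function of $s$ alone and combined with the reflection device of Lemma \ref{lem:neck-straighten} so that Lemma \ref{lem:cylinder} can be applied — and one must check that the bound is uniform in $w_{ij,k}$, which is where the second branch of the cylinder Faber–Krahn inequality (and the per-neck accounting) becomes essential.
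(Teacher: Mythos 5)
Your proof is correct and follows essentially the same path as the paper's: split $\mathcal{V}_3^\delta(\eps;u)$ according to $(\tilde A)$ versus $(\tilde B)$, invoke Lemma \ref{lem:neck-straighten} (resp.\ Lemma \ref{lem:neck-straighten2}) to push the problem onto a flat cylinder (resp.\ strip), apply the cylinder Faber--Krahn inequality of Lemma \ref{lem:cylinder}, and for $(\tilde B)$ run the same propagation-and-cutoff device as in the corner case to make up for $v$ being only $H^1$ rather than $H^1_0$. Your recognition that the $(\tilde B)$ case cannot be handled by a direct Faber--Krahn argument and requires the corner-style cutoff followed by the reflection device is exactly the crux of the paper's Case 2.

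One small point worth tightening: in Case $(\tilde B)$ you divide the \emph{total} $\chi_4^\delta$-mass $\lesssim\delta^2$ only by the first branch of the cylinder bound, and relegate the second branch to ``a strictly better bound''. Because that second branch is proportional to $w_{ij,k}$, dividing a \emph{global} area by it would reintroduce an uncontrolled $w_{ij,k}^{-1}$; the per-neck accounting (area $\lesssim w_{ij,k}\delta$ divided by $\sim w_{ij,k}\gamma^{-1/2}$, giving $\delta\gamma^{1/2}\sim\eps^{-2}(|\Omega(\w)|\mu)^{3/2-3\beta}$) is therefore not optional but necessary for that branch, and you should state the resulting exponent explicitly and check $3/2-3\beta<3-6\beta$ for $\beta<\tfrac12$. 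You flag this at the end, so the idea is present, but in a finished write-up this should be carried out rather than asserted.
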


\begin{proof}
Throughout, we assume that $(|\Omega(\w)|\mu)^{\beta}$ is sufficiently large so that $\delta$ satisfies \eqref{eqn:delta-partition}, and in particular Lemmas \ref{lem:neck-straighten} and \ref{lem:neck-straighten2} apply. The collection of neck nodal domains $\mathcal{V}^\delta_3(\epsilon, u)$ is split into those who satisfy either $(\tilde A)$ or $(\tilde B)$ in \eqref{e: tildeAB}. We proceed to prove that the upper bound we claim on $\nu_3^\delta(\eps;u)$ holds in each case.

\noindent \underline{\emph{Case 1: $D \in \mathcal{V}^\delta_3(\epsilon, u)$ is such that $(\tilde A)$ holds.}}

In this case, by Lemma \ref{lem:neck-straighten}, there exist a neck $\mathscr{N}(\w):= \Q_{ij,k}(\w)$, of minimum width $w:=w_{ij,k}$ and a Lipschitz set $V$ that is a subset of a flat cylinder of circumference $4w$ such that
$$
\Area(V) \leq C^*\Area(\{x\in D\cap \mathscr{N}(\w) :\chi_3^\delta u\neq0\}).
$$
Here, to simplify notation, we have dropped the $ij,k$ subscripts.  Therefore, Lemma \ref{lem:cylinder} yields
$$
\min\{\pi\lambda_1(\mathbb{D})\lambda^{-1}, P\lambda_1(\mathbb{D})^{1/2}\lambda^{-1/2} \}
\leq C^*\Area(\{x\in D\cap \mathscr{N}(\w) :\chi_3^\delta u\neq0\}),
$$
where $\lambda$ is the first Dirichlet eigenvalue for $V$.
By  \eqref{eqn:neck2}, we have
\begin{align*}
    \lambda \leq C^*\eps^{-1}(\mu+\delta^{-2})=:C^*\gamma,
\end{align*}
and so
\begin{align*}
    \Area(\{x\in D\cap \mathscr{N}(\w) :\chi_3^\delta u\neq0\}) \geq (C^*)^{-1}\min\{\pi\lambda_1(\mathbb{D})\gamma^{-1}, 4w\lambda_1(\mathbb{D})^{1/2}\gamma^{-1/2} \}
\end{align*}
for a constant $C^*$ that may increase from line-to-line. The area of the neck $\Q(\w)$ is bounded by a constant depending only on the neck-width constant $w^*$ multiplied by $wL(\w)$.  Therefore,
\begin{align} \label{eqn:neck4b}
    \#\{D \in \mathcal V_3^\delta(\eps, u): (\tilde A) \,\text{holds}\}\leq  C^*wL(\w)\max\{\gamma, w^{-1}\gamma^{1/2}\}.
\end{align}
For $\delta = |\Omega(\w)|^{1/2-\beta}\mu^{-\beta}$, we have
\begin{align*} 
   \gamma= \epsilon^{-1} |\Omega(\w)|^{-1}\left(|\Omega(\w)|\mu + \left(|\Omega(\w)|\mu\right)^{2\beta} \right).
\end{align*}
By Definition \ref{defn:partition}, as the support of $\chi_3^\delta$ intersects $\Q(\w)$, we must have $w\leq 4\delta$. Therefore, since the isoperimetric ratio $L(\w)^2/|\Omega(\w)|$ is bounded from above by $\rho^*$, we have
\begin{align*}
wL(\w)\gamma  \leq 4\delta L(\w)\gamma & = 4L(\w)|\Omega(\w)|^{-1/2}\left(|\Omega(\w)|\mu\right)^{-\beta}\eps^{-1}\left(|\Omega(\w)|\mu + \left(|\Omega(\w)|\mu\right)^{2\beta} \right) \\
& \leq 4(\rho^*)^{1/2}\eps^{-1}\left(\left(|\Omega(\w)|\mu\right))^{1-\beta} + \left(|\Omega(\w)|\mu\right)^{\beta} \right) 
\end{align*}
and
\begin{align*}
wL(\w)w^{-1}\gamma^{1/2} & = L(\w)|\Omega(\w)|^{-1/2}\eps^{-1/2}\left(|\Omega(\w)|\mu + \left(|\Omega(\w)|\mu\right)^{2\beta} \right)^{1/2} \\
& \leq (\rho^*)^{1/2}\eps^{-1/2}\left(|\Omega(\w)|\mu + \left(|\Omega(\w)|\mu\right)^{2\beta} \right)^{1/2}. 
\end{align*}

Using these bounds in \eqref{eqn:neck4b}, together with $0<\beta<\tfrac{1}{2}$, implies that
\begin{align}\label{eqn:neck5}
    \#\{D \in \mathcal V_3^\delta(\eps, u): (\tilde A) \,\text{holds}\}\leq  C^*\eps^{-1}\left(\left(|\Omega(\w)|\mu\right)^{1-\beta} + \left(|\Omega(\w)|\mu\right)^{\beta}\right).
\end{align}
 Since $\beta < 1-\beta$ for $\beta<\tfrac{1}{2}$, the quantity in \eqref{eqn:neck5} therefore satisfies the estimate in the statement of the proposition.

\noindent \underline{\emph{Case 2: $D \in \mathcal{V}^\delta_3(\epsilon, u)$ is such that $(\tilde B)$ holds.}}

In this case, by  Lemma \ref{lem:neck-straighten2} there exist a neck $\mathscr{N}(\w):= \Q_{ij,k}(\w)$, of minimum width $w:=w_{ij,k}$, a Lipschitz set $V$  that is a subset of a strip of width $2w$ and length $C^*\delta$,
 and a function $v\in H^1(V)$ such that 
 \begin{align*}
    \Area(V) \leq C^*\Area(\{x\in D\cap \mathscr{N}(\w)\,:\, \chi_4^{\delta}u \neq0\})
\end{align*}
 and $\int_V v^2\geq c^*\epsilon$.
In particular,
we can find $x^*$ such that
\begin{align} \label{eqn:neck-straighten2b}
    \int_{V\cap\{x=x^*\}} v(x^*,y)^2\,dy \geq c^*(C^*)^{-1}\eps \delta^{-1}.
\end{align}
We now integrate to the left or right of the strip, as in the proof of Proposition \ref{prop:corner} from \eqref{eqn:corner2} to \eqref{eqn:corner-end1}, using \eqref{eqn:neck-straighten2a} and \eqref{eqn:neck-straighten2b} in place of \eqref{eqn:corner0b} and \eqref{eqn:corner2}, this time applying a cut-off function in the $x$-variable. This provides a Lipschitz set $W$ which is a subset of the infinite strip $\mathbb{R}\times[-w,w]$, and a function $\tilde{v}\in H^1(W)$ such that $\tilde{v}$ vanishes on the part of $\pa W$ in the interior of the strip. Moreover, 
$
\Area(W)\leq \Area(V),
$
and, for $(|\Omega(\w)|\mu)^{\beta}$ sufficiently large, the function $\tilde{v}$ satisfies
\begin{align*}
    \frac{\int_{W}|\nabla \tilde{v}|^2}{\int_{W}\tilde{v}^2} \leq C^*\eps^{-4}\delta^4\mu^3,
\end{align*}
for a constant $C^*$ that may increase from line-to-line. By reflecting across the line $y=w$, we therefore get a set $\tilde{W}$ on a cylinder of circumference $4w$, with  $\Area(\tilde W)=2\Area(W)$, and so that its first Dirichlet eigenvalue, $\lambda=\lambda_1(\tilde W)$, satisfies
\begin{align*}
    \lambda\leq C^*\eps^{-4}\delta^4\mu^3 =: C^*\gamma.
\end{align*}
 As 
 \begin{align*}
    \Area(\tilde W) \leq C^*\Area(\{x\in D\cap \mathscr{N}(\w)\,:\, \chi_4^{\delta}u \neq0\}),
\end{align*}
 by Lemma \ref{lem:cylinder} we obtain
$$
\Area(\{x\in D\cap \mathscr{N}(\w) :\chi_4^\delta u\neq0\})
\geq C^* \min\{\pi\lambda_1(\mathbb{D})\gamma^{-1},4w\lambda_1(\mathbb{D})^{1/2}\gamma^{-1/2}\}.
$$
By Definition \ref{defn:partition}, and the definition of the neck-width constant $w^*$, the area of the part of the neck $\Q(\w)$ in the support of $\chi_4^{\delta}$ is bounded by a constant $C^*$ multiplied by $w\delta$. Therefore,  
\begin{align} \label{eqn:neck-case2}
   \#\{D \in \mathcal V_3^\delta(\eps, u): (\tilde B) \,\text{holds}\} \leq    C^*w\delta\max\{\gamma,w^{-1}\gamma^{1/2}\}.
\end{align}
Setting $\delta = |\Omega(\w)|^{1/2-\beta}\mu^{-\beta}$ gives
\begin{align*}
    \gamma = \eps^{-4}|\Omega(\w)|^{2-4\beta}\mu^{3-4\beta} = \eps^{-4}|\Omega(\w)|^{-1}\left(\Omega(\w)\mu\right)^{3-4\beta}.
\end{align*}
Using $0<\beta<\tfrac{1}{2}$, and $w\leq 4\delta$, the quantity in \eqref{eqn:neck-case2}  therefore satisfies
\begin{align*}
    \#\{D \in \mathcal V_3^\delta(\eps, u): (\tilde B) \,\text{holds}\}  \leq    C^*\eps^{-4}((|\Omega(\w)|\mu)^{3-6\beta}+(|\Omega(\w)|\mu)^{3/2-3\beta}) .
\end{align*}
This satisfies the estimate given in the statement of the proposition, and finishes the proof.
\end{proof}

\section{Proof of Theorem \ref{thm:Main}} \label{sec:end}
This section is dedicated to the proof of Theorem \ref{thm:Main}. The upper bound on $\nu(u_m(\w))$, the number of nodal domains for the $m$-th eigenfunction $u_m(\w)$, will follow from the control on $\{\nu_j^\delta(\eps, u_m(\w))\}_{j=0}^3$ that we developed in previous sections for appropriately chosen $\delta, \eps$. At the same time, when $u_m(\w)$ is Courant sharp, we know that $\nu(u_m(\w))=m$ is bounded below by the number of Neumann eigenvalues under $\mu_m(\w)$. Therefore,  we prove Theorem \ref{thm:Main} in Section \ref{s:pf} after first establishing a lower bound on the Neumann counting function in Section \ref{sec:Weyl}.

\subsection{A lower bound on the Neumann counting function} \label{sec:Weyl}

In this section, we obtain a lower bound on the Neumann counting function
$
\mathcal{N}^N_{\Omega(\w)}(\mu) = \#\{j:\mu_j(\w)<\mu\},
$
using a Weyl remainder estimate.
\begin{prop} \label{prop:Weyl}
Let $\Omega(\w)$ be a chain domain. There exists ${C}^*>0$, such that
\begin{align*}
       \mathcal{N}^N_{\Omega(\w)}(\mu) -  \tfrac{1}{4\pi}|\Omega(\w)|\mu  \geq -C^* \left(|\Omega(\w)| \mu\right)^{3/4} 
\end{align*}
whenever $|\Omega(\w)|\mu\geq {C}^*$.
\end{prop}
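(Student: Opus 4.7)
The plan is to invoke the Weyl remainder estimate of van den Berg and Gittins \cite{vdberg-gittins2016}. Their result furnishes, for any open set $U\subset\R^2$ of finite Lebesgue measure and any auxiliary parameter $\delta>0$ (in an admissible range), a lower bound of the form
$$\mathcal{N}^N_{U}(\mu) \;\geq\; \tfrac{1}{4\pi}|U|\mu \;-\; R(U,\mu,\delta),$$
in which the remainder $R(U,\mu,\delta)$ is a sum of two competing terms, one growing with $M_{U}(\delta)$ and $\mu$, and one decaying in $\delta$ but still involving $|U|$ and a lower power of $\mu$. I would apply this estimate to $U=\Omega(\w)$ and regard $\delta$ as a free parameter to be chosen later.

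The next step is to control $M_{\Omega(\w)}(\delta)$ geometrically. For this I would invoke Lemma~\ref{lem:M-bound}, which gives $M_{\Omega(\w)}(\delta) \leq C^* L(\w)\delta$ provided $\delta \leq \tfrac{3}{4}L(\w)\min\{\tau^*\delta^*,(\kappa^*)^{-1}\}$. Combining this with the isoperimetric bound $L(\w) \leq (\rho^*)^{1/2} |\Omega(\w)|^{1/2}$ from Section~\ref{sec:unif} reduces the remainder $R(\Omega(\w),\mu,\delta)$ to a purely analytic expression in $|\Omega(\w)|$, $\mu$, and $\delta$, with all implicit constants depending only on the five geometric constants $\rho^*,\kappa^*,\delta^*,\tau^*,w^*$. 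This is exactly what is needed in order to obtain a bound that is uniform over the family of chain domains $\Omega(\w)$ as $\w$ varies, as per Remark~\ref{rem:uniform} and Remark~\ref{r:geom constants}.

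Finally, I would optimize over $\delta$. The $3/4$ exponent in the claimed remainder is exactly the exponent produced by balancing the two competing contributions in $R(\Omega(\w),\mu,\delta)$ after the substitutions above; the optimal $\delta$ comes out proportional to a positive fractional power of $|\Omega(\w)|$ times a negative fractional power of $\mu$. Plugging this value back in yields an overall remainder of order $C^*(|\Omega(\w)|\mu)^{3/4}$. The quantitative hypothesis $|\Omega(\w)|\mu \geq C^*$ is there precisely to guarantee that the optimal $\delta$ lies in the admissible range required by Lemma~\ref{lem:M-bound} and that any additional subleading remainder terms produced by the Weyl estimate are dominated by $(|\Omega(\w)|\mu)^{3/4}$.

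The main obstacle I anticipate is invoking the van den Berg--Gittins estimate on the chain domain $\Omega(\w)$: the boundary $\pa\Omega(\w)$ is only piecewise smooth, has a controlled but nonzero number of corners, and contains necks of arbitrarily small width, so it is important that the cited inequality applies to general open sets of finite measure without further regularity, and that the dependence of its implicit constants on $\Omega(\w)$ is captured entirely by $M_{\Omega(\w)}(\delta)$ and $|\Omega(\w)|$. Once this is verified, the substitutions from Lemma~\ref{lem:M-bound} and the isoperimetric bound, together with the elementary optimization in $\delta$, complete the argument.
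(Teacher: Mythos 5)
Your plan matches the paper's argument almost exactly: apply the van den Berg--Gittins two-term Weyl remainder estimate, bound $M_{\Omega(\w)}(\delta)$ via Lemma~\ref{lem:M-bound}, use the isoperimetric bound $L(\w) \leq (\rho^*)^{1/2}|\Omega(\w)|^{1/2}$, optimize in $\delta$ to balance the two remainder contributions, and use the hypothesis $|\Omega(\w)|\mu \geq C^*$ to keep the optimal $\delta$ in the admissible range for Lemma~\ref{lem:M-bound}.

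There is, however, one step you have elided that the paper makes explicit and that is genuinely needed: the van den Berg--Gittins estimate is a Weyl remainder bound for the \emph{Dirichlet} counting function $\mathcal{N}^D_{\Omega(\w)}(\mu)$, not the Neumann one, and their result is for open sets of finite measure with no boundary regularity imposed precisely because Dirichlet boundary conditions are well-behaved under such generality. Your proposal states that their theorem furnishes a lower bound on $\mathcal{N}^N_U(\mu)$ directly, which is not what the reference gives. The bridge is elementary but must be stated: by the min-max characterization of eigenvalues, the Neumann eigenvalues are dominated by the Dirichlet ones, $\mu_m(\w) \leq \lambda_m(\w)$, hence $\mathcal{N}^N_{\Omega(\w)}(\mu) \geq \mathcal{N}^D_{\Omega(\w)}(\mu)$, and the lower bound transfers. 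Without this observation your invocation of the cited inequality does not apply to the Neumann counting function appearing in the statement, and the ``main obstacle'' you flag about piecewise-smooth boundaries and thin necks is answered exactly by this reduction: the Dirichlet estimate needs no regularity, and the passage to Neumann is a one-line domain monotonicity argument. Once that is inserted, the rest of your outline goes through as you describe.
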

\begin{proof1}{Proposition \ref{prop:Weyl}}
By the min-max characterization of eigenvalues $\mu_m(\w) \leq \lambda_m(\w)$, where $\lambda_m(\w)$ is the $m$-th Dirichlet eigenvalue of $\Omega(\w)$. Therefore,
\begin{align*}
    \mathcal{N}^N_{\Omega(\w)}(\mu)\geq \mathcal{N}^D_{\Omega(\w)}(\mu),
\end{align*}
where $\mathcal{N}^D_{\Omega(\w)}(\mu)$ is the Dirichlet counting function. Next, let
$$
R_{\Omega(\w)}(\mu) =   \tfrac{1}{4\pi}|{\Omega}(\w)|\mu -\mathcal{N}^D_{\Omega(\w)}(\mu).
$$
By \cite[Equation  (13)]{vdberg-gittins2016}, in dimension $2$, we obtain, for any $\eps>0$,
\begin{align} \label{eqn:Weyl1}
    R_{\Omega(\w)}(\mu) \leq \tfrac{1}{4\pi}M_{\Omega(\w)}(\sqrt{2}\eps)\mu + |\Omega(\w)| (\sqrt{2}\eps)^{-1}\mu^{1/2}.
\end{align}
For $t$ satisfying
\begin{align} \label{eqn:t-upper}
    t\leq \tfrac{3}{4}L(\w)\min\left\{\tau^*\delta^*,\tfrac{1}{\kappa^*}\right\},
\end{align} 
we can apply Lemma \ref{lem:M-bound}, so that 
\begin{align*}
    M_{\Omega(\w)}(t) \leq C^*L(\w)t,
\end{align*}
for a constant $C^*$. Therefore, setting $t = 2\sqrt{\eps}$, from \eqref{eqn:Weyl1} we obtain the estimate
\begin{align} \label{eqn:Weyl2a}
      R_{\Omega(\w)}(\mu) \leq C^*L(\w)t\mu  + |\Omega(\w)| t^{-1}\mu^{1/2},
\end{align}
provided \eqref{eqn:t-upper} continues to hold. Defining $t>0$ by
\begin{align} \label{eqn:t-upper2}
    t^2 = (C^*)^{-1}|\Omega(\w)|/L(\w) \mu^{-1/2},
\end{align}
in order to minimize the right hand side of \eqref{eqn:Weyl2a}, gives
\begin{align} \label{eqn:Weyl2}
     R_{\Omega(\w)}(\mu) \leq 2(C^*)^{1/2}L(\w)^{1/2}|\Omega(\w)|^{1/2}\mu^{3/4} \leq 2(C^*)^{1/2}(|\Omega(\w)|\mu)^{3/4}.
\end{align}
Note that from \eqref{eqn:t-upper2} and the isoperimetric inequality,
\begin{align*}
\Big(\frac{t}{L(\w)}\Big)^4 = \frac{(C^*)^{-2}(|\Omega(\w)|/L(\w)^2)^3}{|\Omega(\w)|\mu} \leq \frac{(C^*)^{-2}(4\pi)^{-3}}{|\Omega(\w)|\mu}.
\end{align*}
Therefore, there exists a constant $\tilde{C}^*$ such that, for $|\Omega(\w)|\mu \geq \tilde{C}^*$, this choice of $t$ satisfies \eqref{eqn:t-upper}. Since
\begin{align*}
    \mathcal{N}^N_{\Omega(\w)}(\mu) -  \tfrac{1}{4\pi}|\Omega(\w)|\mu \geq   \mathcal{N}^D_{\Omega(\w)}(\mu) -  \tfrac{1}{4\pi}|\Omega(\w)|\mu  = -R_{\Omega(\w)}(\mu),
\end{align*}
the estimate in \eqref{eqn:Weyl2} thus completes the proof of the proposition.
\end{proof1}

\subsection{Proof of Theorem \ref{thm:Main}}\label{s:pf}

We now combine the estimates on the nodal domain counts from previous section with  Proposition \ref{prop:Weyl} in order to prove Theorem \ref{thm:Main}.

Let $u_m(\w)$ be the $m$-th Neumann eigenfunction of $\Omega(\w)$ with eigenvalue $\mu_m(\w)$. To prove the theorem, we can assume that $|\Omega(\w)|\mu_m(\w)$ is sufficiently large so that the estimates in Propositions  \ref{prop:bulk}, \ref{prop:boundary}, \ref{prop:corner}, \ref{prop:neck}, and \ref{prop:Weyl} all hold with $\mu=\mu_m(\w)$. We will use these estimates to then show that  $|\Omega(\w)|\mu_m(\w)\leq C^*$ for a constant $C^*$. 

Let 
$$\beta := \tfrac{3}{8}, \qquad \delta:=|\Omega(\w)|^{1/2-\beta}\mu^{-\beta}.$$
By Propositions \ref{prop:bulk}, \ref{prop:boundary}, \ref{prop:corner}, and \ref{prop:neck}, there exists a constant $\tilde{C}^*$ such that for each $0<\eps<\tfrac{1}{2}$, by \eqref{e: nd bd} the number of nodal domains of $u_m(\w)$ satisfies
\begin{align} \label{eqn:end1}
    \nu(u_m(\w))
    &\leq\sum_{j=0}^3\nu_j^\delta(\eps, u_m(\w))\\
    &\leq \frac{1}{\pi\lambda_1(\mathbb{D})}\frac{1+\eps}{1-\eps}|\Omega(\w)|\mu_m(\w) + \tilde{C}^*\eps^{-4}\left(|\Omega(\w)|\mu_m(\w)\right)^{3/4},
\end{align}
provided that $|\Omega(\w)|\mu\geq \tilde{C}^*$. Since $\mathcal{N}^{N}_{\Omega(\w)}(\mu_m(\w)) \leq m-1$, by Proposition \ref{prop:Weyl},
\begin{align} \label{eqn:end2}
     m-1 -  \frac{1}{4\pi}|\Omega(\w)|\mu_m(\w)  \geq -C^*\left(|\Omega(\w)|\mu_m(\w)\right)^{3/4},
\end{align}
provided that $|\Omega(\w)|\mu\geq {C}^*$. 

When $u_m(\w)$ is Courant sharp, we have $\nu(u_m(\w)) = m$. Therefore, combining \eqref{eqn:end1} and \eqref{eqn:end2} implies, in the Courant sharp case, that $\mu_m(\w)$ satisfies
\begin{align} 
  \tfrac{1}{4\pi}|\Omega(\w)|\mu_m(\w) - C^*  \left(|\Omega(\w)|\mu_m(\w)\right)^{3/4} \leq \frac{1}{\pi\lambda_1(\mathbb{D})}\frac{1+\eps}{1-\eps}|\Omega(\w)|\mu_m(\w) + \tilde{C}^*\eps^{-4}\left(|\Omega(\w)|\mu_m(\w)\right)^{3/4}. \label{eqn:end3}
\end{align}
We fix a small absolute constant $\eps>0$ so that
\begin{align*}
    \frac{1}{\pi\lambda_1(\mathbb{D})}\frac{1+\eps}{1-\eps} < \frac{1}{4\pi},
\end{align*}
which we can do since $4/\lambda_1(\mathbb{D})<1$. Then,  we can rearrange \eqref{eqn:end3} to guarantee that $$|\Omega(\w)|\mu_m(\w) \leq C_1^*$$ for a constant depending only on $C^*$, $\tilde{C}^*$ and this value of $\eps$. This establishes the second part of the theorem. To prove the first part of the theorem, we see from \eqref{eqn:end1} that for any eigenfunction $u_m(\w)$, 
\begin{align*}
     \frac{\nu(u_m(\w))}{m} \leq \frac{1}{\pi\lambda_1(\mathbb{D})}\frac{1+\eps}{1-\eps}\,\frac{|\Omega(\w)|\mu_m(\w)}{m} + \tilde{C}^*\eps^{-4}\frac{\left(|\Omega(\w)|\mu_m(\w)\right)^{3/4}}{m} .
\end{align*}
Using \eqref{eqn:end2}, we therefore have
\begin{align*}
    {\lim\sup}_{m\to\infty} \frac{\nu(u_m(\w))}{m} \leq \frac{4\pi}{\pi\lambda_1(\mathbb{D})}\frac{1+\eps}{1-\eps} 
\end{align*}
for any $\eps>0$. Letting $\eps\to0$ then completes the proof of the theorem.

\bibliographystyle{plain}
\bibliography{nodal}

\end{document}